\newcommand{\IA}{\mathbb{A}}
\newcommand{\ia}{\mathbbm{a}}
\newcommand{\IE}{\mathbb{E}}
\newcommand{\Ii}{\mathbbm{1}}
\newcommand{\IR}{\mathbb{R}}
\newcommand{\IN}{\mathbb{N}}
\newcommand{\IZ}{\mathbb{Z}}
\newcommand{\IP}{\mathbb{P}}
\newcommand{\dif}{\,\text{d}}
\newcommand{\ssG}{\mathcal{G}}
\newcommand{\ssF}{\mathcal{F}}
\newcommand{\Epo}{\mathcal{E}}
\newcommand{\reluD}{\mathcal{R}_D}
\newcommand{\reluG}{\mathcal{R}_G}
\newcommand{\reff}[1]{(\ref{#1})}
\newcommand{\fg}{\mathcal{R}_{fg}}
\newcommand{\wnc}{W_{1,n}^c}
\newcommand{\wn}{W_{1,n}}
\newcommand{\minitab}[2][l]{\begin{tabular}{#1}#2\end{tabular}}
\newcommand{\relu}{\mathcal{R}}
\newcommand{\eg}{\hat{g}_{m,n}}
\newcommand{\ew}{\hat{W}_{1,n}}
\newcommand{\gm}{\mathcal{G}}
\newcommand{\gmc}{\ssG^c}
\newcommand{\col}[1]{\textcolor{red}{#1}}
\newcommand{\co}[1]{\textcolor{green}{#1}}
\newcommand{\N}{\mathds{N}}
\newcommand{\R}{\mathbb{R}}
\newcommand{\rg}{\relu_G}
\newcommand{\rf}{\relu_D}
\newcommand{\rft}{\relu_D}
\newcommand{\rgt}{\relu_G}
\newcommand{\ep}{\varepsilon}
\renewcommand{\P}{\mathds{P}}
\DeclareMathOperator*{\argmin}{arg\,min}
\newcommandx*{\erw}[2][2= {}]{ \ensuremath{\IE_{#2}\! \left[ #1 \right] }}
\newcommandx*{\eer}[2][2= {}]{ \ensuremath{\hat{\IE}_{#2}\! \left[ #1 \right] }}
\newcommand{\gn}{g^*}
\numberwithin{equation}{section}
\theoremstyle{plain}
\newtheorem{thm}{Theorem}[section]
\newtheorem{lem}[thm]{Lemma}
\newtheorem{cor}[thm]{Corollary}
\newtheorem{prop}[thm]{Proposition}
\theoremstyle{definition}
\newtheorem{defi}[thm]{Definition}
\newtheorem{rem}[thm]{Remark}
\crefname{assum}{assumption}{assumptions}
\crefname{thm}{theorem}{theorems}
\begin{document}


\begin{center}
	\LARGE Statistical analysis of Wasserstein GANs with applications to time series forecasting
\end{center}

\begin{center}
	\large Moritz Haas and Stefan Richter
\end{center}







\begin{abstract}
We provide statistical theory for conditional and unconditional Wasserstein generative adversarial networks (WGANs) in the framework of dependent observations. We prove upper bounds for the excess Bayes risk of the WGAN estimators with respect to a modified Wasserstein-type distance. Furthermore, we formalize and derive statements on the weak convergence of the estimators and use them to develop confidence intervals for new observations. The theory is applied to the special case of  high-dimensional time series forecasting. We analyze the behavior of the estimators in simulations based on synthetic data and investigate a real data example with temperature data. The dependency of the data is quantified with absolutely regular $\beta$-mixing coefficients. 
\end{abstract}





\section{Introduction}

Generative adversarial networks (GANs) are a class of algorithms in machine learning for learning distributions in high-dimensional feature spaces. After the training process, they are able to generate new random fake observations mimicking the observations already seen. In applications, they have shown to provide surprisingly good results in image and speech generation as well as in inpainting tasks.

The training process is designed as follows: Iteratively, two neural networks compete against each other. While the first network (the  \emph{generator}) produces new random observations which imitate the original training samples, the second network (the \emph{critic} or \emph{discriminator}) judges their quality and tries to discriminate between true and generated observations. The assessment is performed with a specific distance of probability distributions. The original GAN was defined with a Kullback-Leibler-type divergence (so called Vanilla GANs, cf. \cite{gan}). In practical applications, GANs using the Wasserstein distance (so called WGANs, cf. \cite{wgan}, \cite{wgangp}) have become popular due to their training stability and the high quality of the generated observations. In contrast to other divergence measures, such as the Kullback-Leibler divergence or the total variation divergence, the Wasserstein distance metrizes weak convergence, which makes it sensible to differences of distributions on lower-dimensional submanifolds and with disjoint supports (cf. \cite{wgan}). This property stabilizes the training procedure of WGANs remarkably.

Let $d\in\IN$ be the dimension of the feature space. WGANs learn a structured probability distribution $\IP^{X}$ from potentially high-dimensional training samples $X_i \in \IR^{d}$, $i \in \{1,...,n\}$. To do so, a latent space $\IR^{d_Z}$ with dimension $d_Z\in\IN$ and latent random variables $Z_1,...,Z_n \in \IR^{d_Z}$ with a given \enquote{base distribution} $\IP^{Z}$ are introduced. Then one tries to minimize the Wasserstein distance of the empirical measure of the training samples,
\[
	\hat\IP_n^{X} := \frac{1}{n}\sum_{i=1}^{n}\delta_{X_i}
\]
(here, $\delta_{X_i}$ denotes the point measure on $X_i$) and the empirical measure of modified latent variables,
\[
	\hat\IP_n^{g(Z)} := \frac{1}{n}\sum_{j=1}^{n}\delta_{g(Z_j)},
\]
with respect to the \emph{generator} $g:\IR^{d_Z} \to \IR^{d_X}$. After the learning process, an estimator $\hat g$ of $g$ can produce new observations $\hat g(Z)$ which approximately follow $\IP^{X}$ by sampling from the latent space $Z \sim \IP^{Z}$.

The approach was generalized to conditional distributions $\IP^{X|Y}$ in \cite{cgans}. Let $d_Y\in\IN$ be the dimension of the conditional feature space. If samples $(X_i,Y_i) \in \IR^{d + d_{Y}}$, $i \in \{1,...,n\}$ are observed, then the conditional WGAN approximately minimizes the Wasserstein distance between the empirical measure
\[
	\hat\IP_n^{X,Y} := \frac{1}{n}\sum_{i=1}^{n}\delta_{X_i,Y_i}
\]
and
\[
	\hat\IP_n^{g(Z,Y),Y} := \frac{1}{n}\sum_{i=1}^{n}\delta_{g(Z_i,Y_i),Y_i},
\]
where here the \emph{conditional generator} is a function $g:\IR^{d_Z + d_Y} \to \IR^{d}$ which also incorporates the values of $Y$ during evaluation. In the same manner as before, approximate observations from $\IP^{X|Y}$ can be obtained after the learning process (i.e. when an estimate $\hat g$ of $g$ is available) by $\hat g(Z,Y)$ with samples $Z \sim \IP^{Z}$ and given observations $Y$. In practice, conditional GANs (cGANs) introduce the information $Y=y$ to the generator in various stages of the architecture. cGANs are very popular for generating images given certain labels such as age, gender or glasses \cite{face_aging} and in image-to-image translation tasks (cf. \cite{imagetoimagecgans,mrireconstr}), e.g. colorizing images or reconstructing higher resolution.

In both situations (unconditional and conditional), WGANs provide an approximation of the law of $\IP^{X}$ or $\IP^{X|Y}$ via $\hat g(Z)$ or $\hat g(Y,Z)$ and offer a powerful tool to obtain new samples even if the training data is high-dimensional. The reason is that under appropriate restrictions on the structure of $g$ and the dimension $d_Z$ of the latent variables, the data $g(Z)$ lies in a low-dimensional submanifold of $\IR^{d}$. 

The purpose of this paper is to provide a theoretical framework for conditional and unconditional WGANs and to prove convergence rates of the excess Bayes risk (with respect to a modified Wasserstein distance) in the context of time series $X_i$, $i=1,...,n$. We formalize in which sense the learned generator function can be used to provide asymptotic confidence sets for $X$. As an application, we will investigate conditional WGANs to provide confidence intervals for observations of high-dimensional time series. The use of WGANs and our corresponding theory is not limited to this example: For instance, one could think of new smoothed  Bootstrap techniques.


Recent results from  \cite{gan_theory} and \cite{wgan_theory} already provided theoretical results for the excess Bayes risk of GANs and WGANs in the case of i.i.d. observations $X_i$. They used network classes fixed in $n$ for both discriminators and generators and therefore could not derive convergence rates for the whole excess Bayes risk. Furthermore, the minimized objective could not be used to derive (asymptotic) distributional properties of their corresponding estimators $\hat g$. With our results, we extend the theory of these publications in several ways:
\begin{enumerate}
    \item We derive explicit statistical properties like characterization of weak convergence for the modified Wasserstein distance used in WGANs
    \item We investigate the conditional WGAN, which is an important generalization for standard statistical applications as forecasting.
    \item We allow the generator to be in a H\"older class and explicitly discuss upper bounds for the approximation error of $\hat g$. This yields explicit upper bounds on the whole excess Bayes risk and allows a discussion of the impact of structural assumptions on $g$ and how the curse of dimension can be avoided in practice.
    \item We allow the observations $X_i$, $i=1,...,n$ and $Y_i, i=1,...,n$ to be dependent.
\end{enumerate}
From a technical point of view, we measure dependence with absolutely regular $\beta$-mixing coefficients. We use empirical process theory from \cite{betanorm} and \cite{dl} as well as refined Talagrand's inequalities from \cite{bousquet_bennett} to provide large deviation inequalities of the excess Bayes risk. 

The paper is organized as follows. In Section \ref{sec_definition}, we introduce the Wasserstein metric as well as the conditional and unconditional WGAN estimator based on neural networks.
\Cref{sec_results} covers the unconditional case. We firstly relate the introduced modified Wasserstein distance (a network based integral probability metric, cf. \cite{integral_prob}) to the 1-Wasserstein distance. Then we provide convergence rates for the excess Bayes risk with respect to this distance under structural assumptions on the underlying data generating process and the neural networks used for estimation. In \Cref{sec_cond_results} we establish equivalent results for the conditional case. In Section \ref{sec_timeseries}, we transfer our results from Section \ref{sec_cond_results} to high-dimensional time series forecasting. In Section \ref{sec:sims}, we provide simulation results of the conditional WGAN algorithm both for simulated data and real-world temperature data. A short conclusion is drawn in Section \ref{sec_conclusion}. All proofs are deferred without further reference to the Appendix.

We now summarize some notation used in this paper.  $(\Omega, \mathcal{A},\P)$ will denote a Borel probability space. For some vector $x\in \IR^d$, let $|x| = (\sum_{j=1}^{d}|x_j|^2)^{1/2}$ denote its Euclidean norm, $|x|_\infty = \max_i |x_i|$ and  $|x|_0= \sum_i \mathbf{1}(x_i\neq 0)$. For measurable functions $f:T \to \IR$, we write $\|f\|_{\infty}:=\sup_{x\in T}|f(x)|$ whenever there is no ambiguity on the domain $T\subset \IR^r$. For $f:T\to \R^{\tilde d}$, we further denote $\|f\|_{\infty} := \max_{j=1,...,\tilde d}\|f_j\|_{\infty}$ and the Lipschitz norm $\|f\|_L := \sup_{x\not=y}\frac{|f(x)-f(y)|}{|x-y|}$ we denote the Lipschitz norm w.r.t. the Euclidean norm $|\cdot|$. Finally, we use the following multi-index calculus: For  differentiable functions $f:T \to \IR$ and $\alpha = (\alpha_1,\dots,\alpha_r)\in\N_0^r$, let $|\alpha|=\sum_{i=1}^r \alpha_i$ and let  $\partial^\alpha f =\partial^{\alpha_1}_1\dots\partial^{\alpha_r}_r f$ denote the $r$-th partial derivative. Finally, for real-valued random variables $W$ and $q > 0$ we write $\|W\|_q := \IE[|W|^{q}]^{1/q}$.


\section{The Wasserstein GAN estimator}
\label{sec_definition}

Throughout the paper, we consider $X_i$, $i=1,...,n$ to be a strictly stationary process taking values in $[0,1]^d$, where $d \in\IN$ is an arbitrary dimension. Here, we restrict ourselves to the unit cube $[0,1]^d$ for convenience, our theory could easily be generalized to arbitrary compact Euclidean spaces. We start with an introduction of the typical approximations used in the Wasserstein GAN approach as well as the optimization problem we aim to discuss. Based on this notation, we then give a statistical formulation of the conditional Wasserstein GAN. To keep our assumptions concise, we define the set of functions $f:T \subset\IR^r \to \IR$ with H\"older coefficient $\beta \ge 1$ via
\begin{eqnarray*}
    &&C^{\beta}(T,K)\\
    &:=&\big\{f:T\to \IR \big|\, \sum_{\alpha:\,0 \le |\alpha| < \beta}\|\partial^\alpha f\|_\infty + \sum_{\alpha:\,|\alpha|=\beta-1} \sup_{x\neq y} \frac{|\partial^\alpha f(x)-\partial^\alpha f(y)|}{|x-y|_\infty}\le K \big\},
\end{eqnarray*}
where $K > 0$. 

\subsection{Simplification of the WGAN objective and model assumption}\label{sec:wgan_intro} Let $d_Z \in\IN$ and $\IP^{Z}$ a known distribution on $[0,1]^{d_Z}$. Let $\ssG \subset \{g:\IR^{d_Z} \to \IR^{d} \text{ measurable}\}$ be a space of generators. In this paper, we will assume that $\ssG$ consists of smooth functions with a special structure (this is made precise in Definition \ref{generatorclass_wgan}). The objective of a WGAN is to approximate the underlying probability distribution $\IP^{X}$ by minimizing the 1-Wasserstein distance to $\IP^{g(Z)}$ with respect to $g\in \ssG$. By the Kantorovich-Rubinstein duality (cf. \cite{villani}), the 1-Wasserstein distance of two probability distributions $\IP_1, \IP_2$ on $\IR^{d}$ can be written as
\[
    W_1(\IP_1,\IP_2) = \sup_{f:\IR^{d} \to \IR, \|f\|_L\leq 1}\big\{ \int f \dif \IP_1 - \int f\dif \IP_2\big\}.
\]
The original objective of the WGAN is to find a suitable $g \in \ssG$ which minimizes
\begin{equation}
    W_1 (\P^X, \P^{g(Z)}) = \sup_{f:\IR^{d} \to \IR, \|f\|_L\leq 1}\big\{\IE f(X)  - \IE f(g(Z))\big\}.\label{theoretical_objective}
\end{equation}
In practical applications, the set of critics $f:\IR^d \to \IR$ is replaced by a certain set of neural networks $\reluD \subset \{f:\IR^{d} \to \IR\}$. This replacement makes the objective more tractable and also allows the graphical interpretation of competing networks. Similarly, for estimation, the set of possible generators $\ssG$ is replaced by a set of neural networks $\reluG \subset \{f:\IR^{d_Z} \to \IR^{d}\}$.

In the following, we therefore replace the theoretical objective \reff{theoretical_objective} by
\begin{equation}
    W_{1,n}(g) := \sup_{f\in \reluD, \|f\|_L\leq 1}\big\{\IE f(X)  - \IE f(g(Z))\big\}.\label{theoretical_objective_2}
\end{equation}

Note that $W_{1,n}$ may depend on $n$ through the class $\reluD$. Due to the restriction on $f \in \reluD$, one can not expect that $W_{1,n}(g) = W_1(\IP^{X}, \IP^{g(Z)})$. This raises the question which properties $W_{1,n}(g)$ should preserve (and thus, how large and of which form $\reluD$ should be) to make it a meaningful distance of the measures $\IP^{X}$ and $\IP^{g(Z)}$. Our basic aim is to preserve the property that $W_{1,n}$ characterizes weak convergence in the sense that for any sequence $g_n \in \reluG$,
\[
    W_{1,n}(g_n) \to 0\quad\text{ implies }\quad g_n(Z) \overset{d}{\to} X.
\]
The precise conditions on $\reluD$ and its connections to the space $\reluG$ of generators are given in Lemma \ref{lemma_weak_convergence} in Section \ref{sec_results}.

In \cite[Theorem 3]{spike_wasser} it was shown that the 1-Wasserstein distance between two measures in $\IR^d$ can, in general, not be estimated with a better rate than $(n\log(n))^{-1/d}$. Even though $W_{1,n}(g)$ is smaller than $W_1(\IP^{X}, \IP^{g(Z)})$, one needs specific structural assumptions on the underlying distribution and the class of estimators to overcome the curse of dimension. Since we aim to approximate $\IP^{X}$ by $\IP^{g(Z)}$, it is clear that we expect some kind of \enquote{sparsity} of $X$ if $d_Z < d$. If we expect $\IP^{X}$ to lie (approximately) in a $d_g$-dimensional submanifold ($d_g \in \{1,...,d\}$) of $\IR^d$, it seems reasonable to choose $d_Z = d_g$. To allow for a more flexible choice of $d_Z$, we introduce the following function class.

\begin{defi}[Generator function class]\label{generatorclass_wgan}
    Let $\ssG(d_Z,d_g,\beta,K)$ be the set of all measurable functions $g:\IR^{d_Z} \to \IR^{d}$ such that any component only depends on $d_g$ arguments and lies in $C^{\beta}([0,1]^{d_Z},K)$.
\end{defi}

In principle, one can allow for much more complicated structures of the generator functions.  Here we reduce ourselves to the above formulation for simplicity. An example of a more general class  which has auto-encoder structure is introduced for the conditional case (cf. Definition \ref{definition_c_autoencoder}) and could also be chosen here.

\begin{figure}
     \centering
         \includegraphics[width=0.7\textwidth]{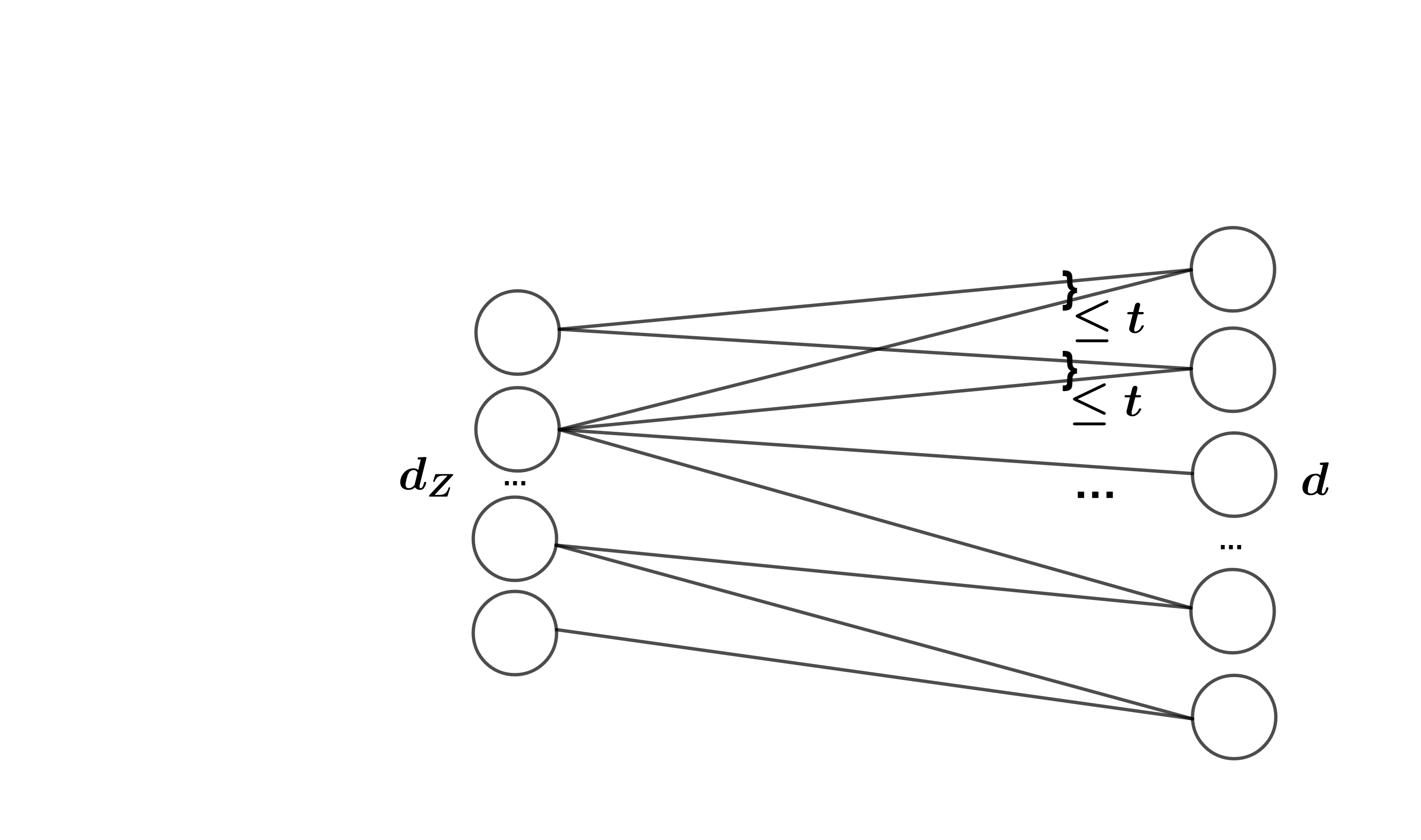}

    \caption{Structure of the generating functions $g$ which are used to model the distribution of $X$ with $g(Z)$.}
    \label{fig:crit_gen}
\end{figure}

\subsection{ReLU neural networks}

We now specify the classes $\reluD$ and $\reluG$ of neural networks in more detail. To do so, we use a theoretical formulation from \cite{sh}. For $x \in\IR$, let  $\sigma(x) = \max\{x,0\}$ denote the rectified linear unit (ReLU) activation function. For $v,x\in R^p$, $p\in\N$, define
\[
    \sigma_{v} (x) = \sigma(x-v),
\]
where $\sigma(\cdot)$ is applied component-wise to the vector $x-v$. Let $L\in\IN$ and $p = (p_0,...,p_{L+1})\in\IN^{L+2}$. A neural network with network architecture $(L,\mathbf{p})$ is a function
\begin{equation}
    h:\R^{p_0}\rightarrow\R^{p_{L+1}}, \quad h(x) = W^{(L)} \sigma_{v^{(L)}} W^{(L-1)} \dots W^{(1)} \sigma_{v^{(1)}} W^{(0)} x,\label{form_network}
\end{equation}
where $W^{(l)} \in \IR^{p_{l} \times p_{l+1}}$, $l=0,...,L$ are the weight matrices and $v^{(l)} \in \IR^{p_l}$, $l=1,...,L$ are the bias vectors associated to the network. Consequently, let
\begin{align*}
    \relu(L,\mathbf{p})=\Big\{ h:\R^{p_0}\rightarrow\R^{p_{L+1}}\;|\; \text{$g$ is of the form \reff{form_network}}\Big\},
\end{align*}
be the class of deep ReLU networks with network architecture $(L,\mathbf{p})$. Training of neural networks typically is done with a stochastic gradient descent method and a random initialization of the weight matrices. It is observed in practice that only few parameters of the resulting networks are \enquote{active} in the sense that they contribute to the final function value. Accordingly, we introduce the set of sparse networks bounded by $F>0$ by
\begin{align*}
    \relu(L,\mathbf{p},s,F):= \Big\{ &h \in \relu(L,\mathbf{p})\;\Big|\; \max_{j=0,\dots,L} \|W_j\|_\infty \vee|v_j|_\infty \leq 1,\\
&\sum_{j=0}^{L} \|W_j\|_0 + |v_j|_0\leq s \text{ and } \| \;|h|_\infty\|_{L^\infty([0,1]^{p_0})}\leq F \Big\}.
\end{align*}
Since $F$ is fixed, we will abbreviate $\relu(L,\mathbf{p},s) = \relu(L,\mathbf{p},s,F)$ in the following.


\subsection{The unconditional WGAN estimator}\label{sec:relu_classes}

We use the theoretical formulation in \reff{theoretical_objective_2} but replace the expectation $\IE f(X)$ by its empirical counterpart $\frac{1}{n}\sum_{i=1}^{n}f(X_i)$. Furthermore, $\IE f(g(Z))$ is approximated by $\frac{1}{n\Epo}\sum_{j=1}^{n\Epo}f(g(Z_{ij}))$, where $Z_{i,j}$, $j=1,...,\Epo$, $i=1,...,n$ are i.i.d. realizations of $\IP^{Z}$ (independent of $X_i$, $i=1,...,n$) and $\Epo \in\IN$ is some parameter. We then obtain
\begin{equation}
    \hat g_{n} := \argmin_{g\in \mathcal{R}(L_g,\mathbf{p}_g,s_g)}\hat W_{1,n}(g)\label{wgan_unconditional}
\end{equation}
with
\begin{eqnarray*}
    \hat W_{1,n}(g) &:=& \sup_{f\in \mathcal{R}(L_f,\mathbf{p}_f,s_f), \|f\|_L \le 1}\big\{\hat\IP_n^{X}f - \hat\IP_{n\Epo}^{Z}(f \circ g)\big\}\\
    &=& \sup_{f\in \mathcal{R}(L_f,\mathbf{p}_f,s_f), \|f\|_L \le 1}\frac{1}{n}\sum_{i=1}^{n}\big\{f(X_i)  - \frac{1}{\Epo}\sum_{j=1}^{\Epo}f(g(Z_{i,j}))\big\}
\end{eqnarray*}
where $L_g,L_f \in \IN$ are the layer sizes, $\mathbf{p}_g, \mathbf{p}_f$ the corresponding width vectors and $s_g,s_f$ the sparsity parameters.

Note that an optimizer $\hat g_n$ exists (cf. \cite{wgan_theory}), since $\hat W_{1,n}$ is Lipschitz continuous with respect to $g\in\mathcal{R}(L_g,\mathbf{p}_g,s_g)$ and $g\in\mathcal{R}(L_g,\mathbf{p}_g,s_g)$ is Lipschitz continuous with respect to its parameters $W^{(l)}$, $v^{(l)}$, which in turn are defined on a compact set. Similarly there exists an optimal critic network in $\mathcal{R}(L_f,\mathbf{p}_f,s_f)$ for any function $g:[0,1]^{d_Z}\to [0,1]^d$.

The parameter $\Epo$ is motivated by algorithms which are used in practice to find approximations of \reff{wgan_unconditional}, cf. Section \ref{sec:sims}. These algorithms work iteratively. Each iteration which uses all training data is called epoch. The random variables $Z_{i}$, $i=1,...,n$ are not sampled one time at the beginning but new samples are generated in each training epoch. Although in practice the generator only has access to a part of the data $X_i$, $i=1,...,n$ in each epoch, $\Epo$ roughly grows proportional to the number of epochs. Thus one can imitate the knowledge coming from the additional realizations of $\IP^{Z}$ and study its implications.

Appropriate choices for these parameters to guarantee upper bounds for the excess Bayes risk are formulated in Section \ref{sec_results}.

\subsection{The conditional WGAN estimator}\label{sec_cond_set}
Conditional GANs (cGANs), firstly introduced in \cite{cgans}, extend the task of learning to sample from a given distribution $\IP^X$ to learning to sample from conditional distributions $P^{X|Y=y}$, $y\in[0,1]^{d_Y}$, where $Y$ is another random variable in a space $[0,1]^{d_Y}$ encoding some information about $X$. The idea is simply to learn the joint distribution $\IP^{X,Y}$ with the same Wasserstein objective \reff{theoretical_objective} with a generator that has access to $Y$. The formal legitimation is that if we find a function  $g_c^{*}:[0,1]^{d_Z+d_Y} \to \IR^d$ with  $\IP^{X,Y}=\IP^{g_c^*(Z,Y),Y}$, then the independency of $Y,Z$ implies $\IP^{X|Y=y} = \IP^{g_c^*(Z,y)}$.

We introduce a more complex class $\ssG^c(d_Z,d_Y,D,d_g,\beta,K)$ of generators with encoder-decoder structure, cf. Figure \ref{fig:c_crit_gen}, so that generators can depend on all components of the conditional information given by $Y$, even for large dimensions $d_Y$.

\begin{defi}[Encoder-decoder structure]\label{definition_c_autoencoder} Let $\ssG^c(d_Z,d_Y,D,d_g,\beta,K)$ be the set of all measurable functions $g:\IR^{d_Z+d_Y}\to \IR^d$ which have the form \[g=g_{dec}\circ g_{enc,1}\circ g_{enc,0},
\] where
\begin{itemize}
    \item $g_{enc,0}:\IR^{d_Z+d_Y}\to \IR^D$ such that any component only depends on $d_g$ arguments and lies in $C^\beta ([0,1]^{d_g},K)$,
    \item $g_{enc,1}:\IR^{D}\to \IR^{d_g}$ such that any component lies in $C^{\tilde\beta} ([0,1]^{D},K)$, with some $\tilde\beta \geq \frac{D}{d_g} \beta$.
    \item $g_{dec}:\IR^{d_g}\to \IR^{d}$ such that any component lies in $C^\beta ([0,1]^{d_g},K)$.
\end{itemize}
\end{defi}

\begin{figure}
     \centering
        \includegraphics[width=0.7\textwidth]{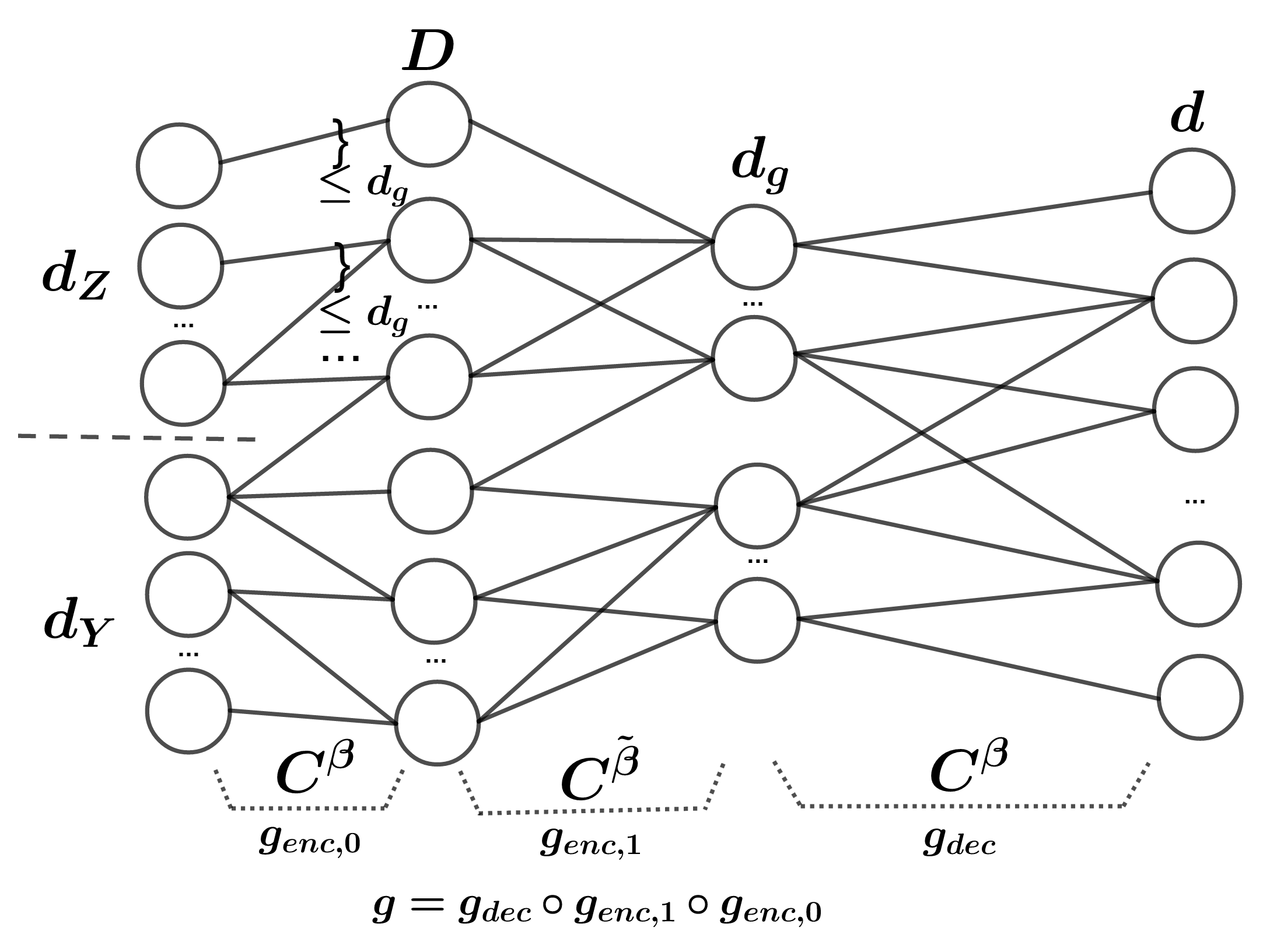}

    \caption{Structure of the generating functions $g$ which are used to model the distribution of $(X,Y)$ with $(g(Z,Y),Y)$.}
    \label{fig:c_crit_gen}
\end{figure}

The original objective now is to find a minimizer of
\[
    W_{1,n}^{c}(g) := \sup_{f \in \mathcal{R}(L_f,\mathbf{p}_f,s_f), \|f\|_L\le 1} \left\{ \IE f(X,Y) -\IE f\big(g(Z,Y),\,Y\big) \right\}.
\]

If $(X_i,Y_i)$, $i=1,\dots,n$ are strictly stationary realizations of $\IP^{(X,Y)}$ and $Z_{i}$, $i=1,...,n$ are i.i.d. realizations of $\IP^{Z}$ independent of $(X_i,Y_i)$, $i=1,...,n$, we define

\begin{equation}
    \hat g_{n}^c := \argmin_{g\in \mathcal{R}(L_g,\mathbf{p}_g,s_g)}\hat W_{1,n}^c(g)\label{wgan_conditional}
\end{equation}
with
\begin{eqnarray*}
    \hat W_{1,n}^c(g) &:=& \sup_{f\in \mathcal{R}(L_f,\mathbf{p}_f,s_f), \|f\|_L \le 1}\frac{1}{n}\sum_{i=1}^{n}\big\{f(X_i,Y_i)  - f(g(Z_{i},Y_i),Y_i)\big\}
\end{eqnarray*}
where $L_g,L_f \in \IN$ are the layer sizes, $\mathbf{p}_g, \mathbf{p}_f$ the corresponding width vectors and $s_g,s_f$ the sparsity parameters. Appropriate choices for these parameters to guarantee upper bounds for the excess Bayes risk are formulated in Section \ref{sec_cond_results}. Note that in contrast to the unconditional WGAN, we do not implement the additional realizations of $\IP^{Z}$ which may occur in practical algorithms. The reason is that for the conditional WGAN, the observations $Y_i$ in the second summand in $\hat W_{1,n}^{c}(g)$ restrict the use of additional knowledge from $Z$ without rather technical assumptions on the structure of $g$.

\section{Theoretical results for the unconditional WGAN}
\label{sec_results}

The first part of this section is devoted to the properties of the modified distances $W_{1,n}(g)$. We show connections between $W_{1,n}(g)$ and distances which do not depend on $n$ and prove that under certain assumptions on the set of networks $\relu_D(L_f,\mathbf{p}_f,s_f)$, $W_{1,n}(g)$ characterizes weak convergence.

In the second part, we provide upper bounds and convergence rates for the excess Bayes risk
\begin{equation}
    R_n(g) := W_{1,n}(g) -  \inf_{g\in \ssG(d_Z,d_g,\beta,K)}W_{1,n}(g)\label{definition_excessbayes}
\end{equation}
for the unconditional WGAN estimator $\hat g_n$ under assumptions on the network structure. In the third part, we summarize the results to provide asymptotic confidence intervals.

\subsection{Properties of the modified Wasserstein distance}

We first investigate the connection of $W_{1,n}(g)$ to
\[
    W_1^{\gamma}(g) := \sup_{f\in C^{\gamma}([0,1]^d,K), \|f\|_L\le 1}\big\{ \IE f(X) - \IE f(g(Z))\big\}.
\]
In opposite to $W_{1,n}$, the quantity $W_1^{\gamma}$ does not depend on $n$ and therefore can be seen as a more \enquote{stable} distance measure for $\IP^{X}$ towards $\IP^{g(Z)}$. Note that
\begin{eqnarray*}
    W_1^{\gamma}(g) &\le& W_{1,n}(g) + 2\sup_{f\in C^{\gamma}([0,1]^d,K)}\inf_{\tilde f \in \mathcal{R}(L_f,\mathbf{p}_f,s_f)}\|f - \tilde f\|_{\infty}.
\end{eqnarray*}

Using approximation results for neural networks from \cite{sh} (cf. Theorem \ref{reluapprox} in the Appendix), one obtains the following result.

\begin{lem}[Lower bound on $W_{1,n}$]\label{lemma_w1beta_approx}
    Let $a_n = n^{-\frac{2\gamma}{2\gamma+d}}$, and suppose that
    \begin{itemize}
        \item $F \ge 1$,
        \item $L_f \ge \log_2(n)\log_2(4d\vee 4\gamma)$,
        \item $\min_{i=1,...,L}p_{f,i} \gtrsim na_n$
        \item $s_f \gtrsim \log(n) na_n$,
    \end{itemize}
    where the constants in the asymptotic expression above depend on $\gamma,d$. Then there exists some constants $C > 0, K \in (0,1)$ only depending on $\gamma,d,F$ such that
    \[
        \sup_{f\in C^{\gamma}([0,1]^d,K), \|f\|_L \le 1}\inf_{\tilde f\in \mathcal{R}_D(L_f,\mathbf{p}_f,s_f)}\|f - \tilde f\|_{\infty} \le C a_n^{1/2}.
    \]
    Especially, for any measurable $g:\IR^{d_Z} \to \IR^d$,
    \begin{equation}
       W_1^{\gamma}(g) \le W_{1,n}(g) + Ca_n^{1/2}.\label{lemma_w1beta_approx_eq1}
    \end{equation}
\end{lem}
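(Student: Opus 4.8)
The plan is to reduce the statement entirely to the quantitative neural-network approximation theorem from \cite{sh} (cited as Theorem \ref{reluapprox} in the Appendix), which provides, for any $f \in C^\gamma([0,1]^d,K)$ with a suitable H\"older coefficient, a sparse ReLU network $\tilde f$ with prescribed depth, width and sparsity satisfying $\|f-\tilde f\|_\infty \lesssim \varepsilon$. I would first instantiate that theorem with the target accuracy $\varepsilon = a_n^{1/2} = n^{-\gamma/(2\gamma+d)}$. Matching the parameter requirements of that theorem to the accuracy $\varepsilon$: the depth needs to be of order $\log(1/\varepsilon)$ up to constants depending on $d,\gamma$, which is dominated by $\log_2(n)\log_2(4d\vee 4\gamma)$; the width and sparsity need to be of order $\varepsilon^{-d/\gamma}$, respectively $\varepsilon^{-d/\gamma}\log(1/\varepsilon)$, and since $\varepsilon^{-d/\gamma} = n^{d/(2\gamma+d)} = n a_n$, these are exactly the hypotheses $\min_i p_{f,i}\gtrsim na_n$ and $s_f \gtrsim \log(n) na_n$. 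One must also check the boundedness constraint: the approximating networks from \cite{sh} are uniformly bounded by a constant depending on $K,d,\gamma$, so choosing $F\geq 1$ large enough (or, as stated, shrinking $K$ to a value in $(0,1)$ so the bound fits under the fixed $F$) places $\tilde f \in \mathcal{R}_D(L_f,\mathbf{p}_f,s_f,F)$. This yields the first displayed inequality with some constant $C$ depending only on $\gamma,d,F$.

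The second and more delicate point is the Lipschitz constraint: the supremum defining $W_1^\gamma$ is over $f$ with $\|f\|_L \le 1$, and the supremum defining $W_{1,n}$ is over networks $\tilde f \in \mathcal{R}_D$ with $\|\tilde f\|_L \le 1$, so I must ensure the approximant $\tilde f$ (or a mild modification of it) also satisfies $\|\tilde f\|_L \le 1$. The approximation theorem does not automatically preserve the Lipschitz norm; in general $\|\tilde f\|_L$ could be as large as $1 + O(\varepsilon)$ or a constant. The standard fix is to rescale: replace $\tilde f$ by $c_n \tilde f$ for some $c_n \le 1$ chosen so that $\|c_n\tilde f\|_L \le 1$, noting $c_n = 1/(1+O(a_n^{1/2}))$ and hence $\|f - c_n\tilde f\|_\infty \le \|f-\tilde f\|_\infty + (1-c_n)\|\tilde f\|_\infty \lesssim a_n^{1/2}$ since $\|\tilde f\|_\infty \le F$; scaling a ReLU network by a constant only changes the last weight matrix $W^{(L)}$ and preserves depth, width, and sparsity (and keeps it bounded by $F$). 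Alternatively one invokes a version of \cite{sh}'s result that already controls the Lipschitz constant, or one absorbs the constant factor into $K$. Either way, the upshot is $\sup_{f}\inf_{\tilde f\in\mathcal{R}_D,\,\|\tilde f\|_L\le 1}\|f-\tilde f\|_\infty \le C a_n^{1/2}$.

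Given this, \eqref{lemma_w1beta_approx_eq1} follows immediately from the pre-displayed inequality
\[
    W_1^{\gamma}(g) \le W_{1,n}(g) + 2\sup_{f\in C^{\gamma}([0,1]^d,K)}\inf_{\tilde f \in \mathcal{R}(L_f,\mathbf{p}_f,s_f),\,\|\tilde f\|_L\le 1}\|f - \tilde f\|_{\infty},
\]
by bounding the second term by $2C a_n^{1/2}$ and relabelling $2C$ as $C$. The derivation of that pre-displayed inequality itself is routine: for any $f$ admissible in $W_1^\gamma$ and any network $\tilde f$ with $\|\tilde f\|_L \le 1$, write $\IE f(X) - \IE f(g(Z)) = [\IE \tilde f(X) - \IE \tilde f(g(Z))] + [\IE(f-\tilde f)(X) - \IE(f-\tilde f)(g(Z))]$, bound the first bracket by $W_{1,n}(g)$ and the second by $2\|f-\tilde f\|_\infty$, then take the infimum over $\tilde f$ and the supremum over $f$.

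I expect the main obstacle to be the careful bookkeeping in the second paragraph: verifying that the Lipschitz-norm constraint can be met without destroying the depth/width/sparsity budget or the uniform bound $F$, and tracking how the rescaling interacts with the constant $C$ and the admissible range of the H\"older radius $K$ (which is why the statement only claims existence of \emph{some} $K \in (0,1)$). The parameter-matching in the first paragraph is mechanical once the exponents $a_n^{1/2} = n^{-\gamma/(2\gamma+d)}$ and $na_n = n^{d/(2\gamma+d)}$ are written out, and the final assembly is a one-line triangle-inequality argument.
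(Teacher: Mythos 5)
Your overall strategy is right and matches the route the paper gestures at: the first displayed bound is a direct instantiation of Theorem~\ref{reluapprox} with $\varepsilon = a_n^{1/2}$, your exponent bookkeeping $\varepsilon^{-d/\gamma} = n a_n$ and $\log(1/\varepsilon) \lesssim \log n$ is correct, and your derivation of the triangle-inequality decomposition for~\eqref{lemma_w1beta_approx_eq1} is the right one (indeed your version, which carries the constraint $\|\tilde f\|_L\le 1$ inside the infimum, is more careful than the pre-displayed inequality in the paper text, which silently drops it even though it is needed for $\IE\tilde f(X)-\IE\tilde f(g(Z))\le W_{1,n}(g)$ to hold).

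The one real gap is your primary treatment of the Lipschitz constraint. You claim that $\|\tilde f\|_L = 1 + O(a_n^{1/2})$ so that the rescaling constant satisfies $c_n = 1/(1+O(a_n^{1/2}))$ and thus $(1-c_n)\|\tilde f\|_\infty \lesssim a_n^{1/2}$. Theorem~\ref{reluapprox} gives no such thing: its Lipschitz guarantee is $\mathrm{Lip}(N,k) = 2\beta F(K+1)e^r(24r^62^rN2^{-k}+3r)$, which tends to a \emph{constant} of order $F e^d d$ as $N2^{-k}\to 0$, not to $1$. Sup-norm approximation does not transfer to Lipschitz-norm approximation, so $\|\tilde f\|_L$ is in general not close to $\|f\|_L$. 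Consequently $1-c_n$ is a constant bounded away from $0$, and the term $(1-c_n)\|\tilde f\|_\infty$ does not vanish; the rescaling as you propose it does not give the rate.

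The fix you mention in passing, \enquote{absorb the constant factor into $K$}, is the right one and should be made the main argument. Concretely: given $f\in C^\gamma([0,1]^d,K)$, apply Theorem~\ref{reluapprox} to $h := f/K \in C^\gamma([0,1]^d,1)$, obtaining $\tilde h$ with $\|h-\tilde h\|_\infty\lesssim a_n^{1/2}$ and $\|\tilde h\|_L\le L_0$ for a constant $L_0$ depending only on $\gamma,d,F$ (for $n$ large, $\mathrm{Lip}(N,k)\le L_0$; for small $n$ the target bound is trivial by enlarging $C$). Set $\tilde f := K\tilde h$; scaling the last weight matrix by $K<1$ preserves membership in $\relu(L_f,\mathbf{p}_f,s_f,F)$ since weight entries remain $\le 1$ and $\|\tilde f\|_\infty\le K(1+o(1))\le F$. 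Then $\|\tilde f\|_L\le KL_0\le 1$ provided $K\le 1/L_0$, which is exactly why the lemma is free to choose $K=K(\gamma,d,F)\in(0,1)$. The approximation error is $\|f-\tilde f\|_\infty = K\|h-\tilde h\|_\infty\lesssim a_n^{1/2}$, and the second display follows from your third paragraph. This is the argument that actually closes the gap; the $c_n = 1/(1+O(a_n^{1/2}))$ claim should be dropped.
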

The lemma shows that the convergence rate of $W_{1,n}(\hat g_n)$  transfers to $W^{\gamma}_1(\hat g_n)$ as long as $a_n^{1/2} \le W_{1,n}(\hat g_n)$. In fact, this imposes a \emph{lower bound} on the H\"older exponent $\gamma$ of functions considered with $W^{\gamma}_1$.

In the case that $\IP^{X} = \IP^{g^{*}(Z)}$ with some $g^{*} \in \ssG(d_Z,d_g,\beta,K)$, the results for the excess Bayes risk \reff{definition_excessbayes} presented in the following Section \ref{sec_excessbayes} can be used to derive weak convergence. The reasoning is as follows: If $R_n(\hat g_n) \to 0$, then $\IE W_{1,n}(\hat g_n) \to 0$. Then the following lemma can be used.

\begin{lem}[Characterization of weak convergence]\label{lemma_weak_convergence} Suppose that $\IP^{X} = \IP^{g^{*}(Z)}$ for some $g^{*} \in \ssG(d_Z,d_g,\beta,K)$ and let the assumptions of Lemma \ref{lemma_w1beta_approx} hold with some $\gamma \ge 1$. Let $(\hat g_n)_{n\in\IN}$ be a sequence of random variables with $\IE W_{1,n}(\hat g_n) \to 0$. Then
\[
    \hat g_n(Z) \overset{d}{\to} g^{*}(Z) = X.
\]
\end{lem}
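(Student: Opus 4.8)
The plan is to deduce weak convergence $\hat g_n(Z) \overset{d}{\to} X$ from the hypothesis $\IE W_{1,n}(\hat g_n) \to 0$ by first passing to the more stable distance $W_1^{\gamma}$, then relating $W_1^{\gamma}$ to the genuine $1$-Wasserstein distance $W_1(\IP^{\hat g_n(Z)}, \IP^X)$, and finally invoking the fact that $W_1$ metrizes weak convergence of probability measures on the compact cube $[0,1]^d$. The reason $W_1^\gamma$ is the right intermediate quantity is precisely \reff{lemma_w1beta_approx_eq1}: since the assumptions of Lemma \ref{lemma_w1beta_approx} hold with $\gamma \ge 1$, we have, for every $n$ and every measurable $g$, $W_1^\gamma(g) \le W_{1,n}(g) + C a_n^{1/2}$ with $a_n = n^{-2\gamma/(2\gamma+d)} \to 0$. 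Applying this to $g = \hat g_n$ and taking expectations gives $\IE W_1^\gamma(\hat g_n) \le \IE W_{1,n}(\hat g_n) + C a_n^{1/2} \to 0$.

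Next I would control $W_1(\IP^{\hat g_n(Z)},\IP^X)$ by $W_1^\gamma(\hat g_n)$ up to a constant factor. The point is that all the relevant measures live on $[0,1]^d$, and on a bounded domain a $1$-Lipschitz function $f$ can be uniformly approximated by smoother functions: concretely, convolving $f$ with a smooth mollifier of bandwidth $h$ produces an arbitrarily smooth function $f_h$ with $\|f_h\|_L \le 1$ and $\|f - f_h\|_\infty \lesssim h$, while $f_h \in C^\gamma$ with a $C^\gamma$-norm that blows up at a rate depending only on $\gamma, d, h$. Choosing $h$ small but fixed (or letting it shrink slowly), and rescaling so the smoothed function lies in $C^\gamma([0,1]^d,K)$ for the fixed $K$ from Lemma \ref{lemma_w1beta_approx}, one obtains an inequality of the form $W_1(\IP^{g(Z)},\IP^X) \le c(\gamma,d,K)\, W_1^\gamma(g) + o(1)$ uniformly in $g$ mapping into $[0,1]^d$; alternatively one notes directly that $W_1^\gamma$ and $W_1$ generate the same topology on probability measures on $[0,1]^d$ because $C^\gamma$-functions are dense in the $1$-Lipschitz functions in sup-norm. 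Either way, $\IE W_1(\IP^{\hat g_n(Z)},\IP^X) \to 0$, hence $W_1(\IP^{\hat g_n(Z)},\IP^X) \to 0$ in probability, and therefore, since $W_1$ metrizes weak convergence on the compact set $[0,1]^d$ (cf.\ \cite{villani}), $\IP^{\hat g_n(Z)} \overset{d}{\to} \IP^X$, i.e.\ $\hat g_n(Z) \overset{d}{\to} X$.

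Finally, the identity $g^*(Z) = X$ in distribution is just the standing hypothesis $\IP^X = \IP^{g^*(Z)}$, which lets us equivalently write the limit as $g^*(Z)$. One subtlety worth flagging: $\hat g_n$ is random, so $W_{1,n}(\hat g_n)$ and $W_1^\gamma(\hat g_n)$ are random variables, and "$\overset{d}{\to}$" here must be interpreted in the appropriate sense for a random sequence of laws; the cleanest route is to argue convergence in probability of the scalar $W_1(\IP^{\hat g_n(Z)},\IP^X)$ toward $0$ via Markov's inequality from the convergence of expectations, and then translate back through the $W_1$-topology. I expect the main obstacle to be making the step $W_1 \lesssim W_1^\gamma + o(1)$ fully rigorous: one must verify that mollification keeps the Lipschitz constant at most $1$ (true for convolution with a probability density) and, more delicately, that after rescaling to fit inside the fixed constraint set $C^\gamma([0,1]^d,K)$ the resulting loss is only a vanishing additive term rather than a fixed multiplicative one — this is where the precise interplay between $K$, the mollifier bandwidth, and boundary effects on the cube has to be handled with care.
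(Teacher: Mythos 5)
Your proposal is correct, but it takes a genuinely different route from the paper's. The paper does not pass through the actual $1$-Wasserstein distance at all: after observing, exactly as you do, that $\IE W_1^{\gamma}(\hat g_n) \to 0$ and hence (using independence of $Z$ and $\hat g_n$) that $\IE f(\hat g_n(Z)) - \IE f(X) \to 0$ for every $f \in C^{\gamma}([0,1]^d,K)$ with $\|f\|_L \le 1$, it runs a direct portmanteau argument. For each closed $A \subset [0,1]^d$ it builds a smooth over-approximant $\rho_{\varepsilon,A}(x) = 1-\varphi\bigl(d(x,A)/\varepsilon\bigr)$ of $\Ii_A$, notes that a small multiple $d(\varepsilon)\rho_{\varepsilon,A}$ lies in the admissible class $C^\gamma([0,1]^d,1)$, deduces $\limsup_n \IP(\hat g_n(Z)\in A) \le \IE\rho_{\varepsilon,A}(X)$, and lets $\varepsilon\downarrow 0$ by dominated convergence. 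This is shorter and avoids the bookkeeping you flag as the delicate step. Your mollification-plus-rescaling route, on the other hand, does close; for a $1$-Lipschitz $f$ with Lipschitz extension to $\IR^d$, convolving at bandwidth $h$ gives $f_h$ with $\|f_h\|_L\le 1$, $\|f-f_h\|_\infty\lesssim h$, and $\|f_h\|_{C^\gamma}\lesssim h^{-(\gamma-1)}$, so after rescaling by $\asymp K h^{\gamma-1}$ one gets $W_1(\IP^{g(Z)},\IP^X) \lesssim h^{-(\gamma-1)}W_1^\gamma(g)+h$, and optimizing $h\asymp W_1^\gamma(g)^{1/\gamma}$ yields $W_1 \lesssim (W_1^\gamma)^{1/\gamma}$. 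That buys a quantitative $W_1$-rate that the paper's argument does not produce, at the cost of more work; the paper's argument is the more elementary one and is exactly what the text means by ``$C^\gamma$ forms a convergence-determining class.'' Both then handle the randomness of $\hat g_n$ the same way, by integrating over $\hat g_n$ using boundedness of everything on the cube.
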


The lemma basically follows from Lemma \ref{lemma_w1beta_approx} and the fact that $C^{\gamma}([0,1]^d)$ forms a convergence-determining class.

\begin{rem}\label{remark_weakconvergence}
    Lemma \ref{lemma_weak_convergence} implies weak convergence of $\IP^{\hat g_n(Z)}$ towards $\IP^{X}$, but it does not give any information about the speed of convergence. Nevertheless it seems reasonable that the speed depends on the upper bound in \reff{lemma_w1beta_approx_eq1} which is given by the two summands $W_{1,n}(g)$ and $a_n^{1/2}$. Therefore, one should choose $\gamma \ge 1$ large enough such that $a_n^{1/2} \lesssim W_{1,n}(g)$. This is done in Remark \ref{remark_critic_generator} below. On the other hand, for larger $\gamma$,  $W_1^{\gamma}(g)$ gives less information about the distance between $\IP^{g(Z)}$ and $\IP^{X}$. It is an open question how an optimal balance of $\gamma$ should be chosen. A more detailed analysis of the approximation quality of the set of critic networks $\mathcal{R}_D(L_f,\mathbf{p}_f,s_f)$ could yield more insight. However, this would need sharp upper bounds on the Lipschitz constants of $\mathcal{R}(L_f,\mathbf{p}_f,s_f)$ and is a pure approximation problem, which is out of the scope of this paper.
\end{rem}

\subsection{Excess Bayes risk}
\label{sec_excessbayes}

To state the theoretical results on the excess Bayes risk $R_n(\hat g_n)$ in \reff{definition_excessbayes}, we have to quantify the dependence structure of $X_i$, $i=1,\dots, n$ and $Y_i$, $i=1,...,n$. Basically, observations obtained at time steps which are far away from each other have to be \enquote{asymptotically independent}. There exists a large variety of weak and strong mixing conditions. We refer to \cite{bradley_mixing} for a detailed summary of conditions and basic properties. Here, we use absolutely regular $\beta$-mixing due to the well-established empirical process theory (cf. \cite{betanorm} and \cite{dl}).

The $\beta$-mixing coefficient between two $\sigma$-algebras $\mathcal{U}, \mathcal{V}\subseteq\mathcal{A}$ is defined by
\[
    \beta(\mathcal{U},\mathcal{V}):=\frac{1}{2} \sup \sum_{(i,j)\in I\times J} \big| \P(U_i\cap V_j)-\P(U_i) \P(V_j)\big|,
\]
where the supremum is taken over all finite partitions $(A_i)$ and $(B_j)$ $\mathcal{U}$- and $\mathcal{V}$-measurable respectively. For a time series $X_i$, $i=1,...,n$, one defines
\[
    \beta_{ X}(0)=1,\quad
    \beta_{ X}(n):= \beta\big(\sigma({X}_i;\, i\le 0), \sigma({X}_i; \, i\geq n)\big), \quad n\in \N.
\]
Prominent examples of absolutely regular sequences are GARCH and ARMA as well as linear processes (cf. \cite{mixing_arch,bradley_mixing,doukhan2012mixing}).

We now present the theoretical result for the excess Bayes risk for the unconditional WGAN estimator $\hat g_n$.

\begin{thm}\label{theorem_wgan_excessbayes}
       Let $\phi_n = (n\Epo)^{-\frac{2\beta}{2\beta + d_g}}$. Suppose that $F \ge K \vee 1$, and
    \begin{itemize}
        \item[(i)] $\log_2(n\Epo) \log_2(4d_g \vee 4\beta) \le L_g \lesssim \log(n\Epo)$,
        \item[(ii)] $min_{i=1,...,L_g}p_{g,i} \gtrsim n\Epo\phi_n$,
        \item[(iii)] $s_g \asymp n\Epo\phi_n \log(n\Epo)$
        \item[(iv)] $L_f \le L_g$, $s_f \le s_g$.
    \end{itemize}
    Suppose that there exist constants $\kappa > 1, \alpha > 1$ such that for all $k\in\IN$, $\beta_X(k) \le \kappa\cdot k^{-\alpha}$.
    Then
    \begin{equation}
        \IE R_n(\hat g_n) \lesssim \Big(\frac{s_f L_f \log(s_f L_f)}{n}\Big)^{1/2} + \phi_n^{1/2}\log(n\Epo)^{3/2},\label{theorem_wgan_excessbayes_res1}
    \end{equation}
    and with probability at least $1-4n^{-1}-2(\frac{\log(n)}{n})^{\frac{\alpha-1}{2}}$,
    \[
        R_n(\hat g_n) \lesssim \Big(\frac{s_f L_f \log(s_f L_f)}{n}\Big)^{1/2} + \phi_n^{1/2}\log(n\Epo)^{3/2} + \Big(\frac{\log(n)}{n}\Big)^{1/2},
    \]
    where the bounding constants may depend on characteristics of $X_1$, $\kappa,\alpha$ and $d,d_Z,d_g,\beta,K,F$.
\end{thm}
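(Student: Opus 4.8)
The plan is to decompose the excess Bayes risk $R_n(\hat g_n)$ in the usual way into a \emph{stochastic (estimation) error} and an \emph{approximation error}, and then to control each term separately. Write $\hat g_n^* := \argmin_{g\in\ssG(d_Z,d_g,\beta,K)} W_{1,n}(g)$ for a near-minimizer over the full H\"older generator class and let $\tilde g\in\mathcal{R}(L_g,\mathbf{p}_g,s_g)$ be a neural network approximant of $\hat g_n^*$. Then
\begin{align*}
R_n(\hat g_n) &= W_{1,n}(\hat g_n) - W_{1,n}(\hat g_n^*)\\
&\le \underbrace{W_{1,n}(\hat g_n) - \hat W_{1,n}(\hat g_n)}_{(\mathrm{I})} + \underbrace{\hat W_{1,n}(\hat g_n) - \hat W_{1,n}(\tilde g)}_{(\mathrm{II})} + \underbrace{\hat W_{1,n}(\tilde g) - W_{1,n}(\tilde g)}_{(\mathrm{III})} + \underbrace{W_{1,n}(\tilde g) - W_{1,n}(\hat g_n^*)}_{(\mathrm{IV})}.
\end{align*}
Term $(\mathrm{II})$ is nonpositive by definition of $\hat g_n$ as the empirical minimizer. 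Terms $(\mathrm{I})$ and $(\mathrm{III})$ are uniform-deviation terms: both are bounded by $\sup_{g\in\mathcal{R}(L_g,\mathbf{p}_g,s_g)} \sup_{f\in\mathcal{R}(L_f,\mathbf{p}_f,s_f),\,\|f\|_L\le 1} \big| (\hat\IP_n^X - \IP^X)f - (\hat\IP_{n\Epo}^Z - \IP^Z)(f\circ g)\big|$, i.e.\ a supremum of an empirical process indexed by the composed function class $\{(x,z)\mapsto f(x) - \frac{1}{\Epo}\sum_j f(g(z_j)) : f\in\reluD,\|f\|_L\le1,\ g\in\reluG\}$. Term $(\mathrm{IV})$ is the approximation error: since $W_{1,n}(g)$ is $1$-Lipschitz in $g$ with respect to $\|\cdot\|_\infty$ (the critics are $1$-Lipschitz), $(\mathrm{IV}) \le \|\tilde g - \hat g_n^*\|_\infty$, which is controlled by the ReLU approximation theorem for H\"older functions of effective dimension $d_g$ (Theorem~\ref{reluapprox}); under hypotheses (i)--(iii) this gives $(\mathrm{IV}) \lesssim \phi_n^{1/2}$ up to logarithmic factors.

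The main work is bounding the uniform-deviation term. For the $X$-part, $X_i$ is only $\beta$-mixing, so I would invoke the empirical-process machinery for absolutely regular sequences from \cite{betanorm} and \cite{dl}: a chaining/peeling argument combined with a Bernstein-type deviation bound, using that the bracketing or covering numbers of the bounded ReLU class $\mathcal{R}(L_f,\mathbf{p}_f,s_f)$ (which are standard, of order $s_f L_f \log(s_f L_f)$ in the exponent) are controlled, and that the function class is uniformly bounded by $F$ with variance proxy of the same order. The $Z$-part is i.i.d.\ so the same (or an easier) argument applies, but the indexing class is now $\{f\circ g\}$ with $f\in\reluD$, $g\in\reluG$; its entropy is controlled by the product of the entropies, giving the same $s_f L_f$ rate since $s_f\le s_g$, $L_f\le L_g$ and the generator network is also bounded and sparse. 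This yields the $\big(s_f L_f \log(s_f L_f)/n\big)^{1/2}$ term. To pass from an expectation bound to the high-probability statement, I would use the refined Talagrand/Bousquet--Bennett concentration inequality from \cite{bousquet_bennett}, applied after a standard blocking argument (Berbee's coupling) that reduces the $\beta$-mixing sequence to independent blocks; the coupling error contributes the $(\log(n)/n)^{(\alpha-1)/2}$ and the probabilities $4n^{-1} + 2(\log(n)/n)^{(\alpha-1)/2}$ in the tail, under the polynomial mixing rate $\beta_X(k)\le\kappa k^{-\alpha}$ with $\alpha>1$, while Talagrand's inequality with deviation level $\sim\sqrt{\log(n)/n}$ supplies the extra $(\log(n)/n)^{1/2}$ summand.

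Combining: $(\mathrm{I}) + (\mathrm{III}) \lesssim \big(s_f L_f\log(s_f L_f)/n\big)^{1/2}$ (in expectation, and with the extra $\sqrt{\log n/n}$ term with high probability), $(\mathrm{II})\le 0$, and $(\mathrm{IV})\lesssim \phi_n^{1/2}\log(n\Epo)^{3/2}$ once the logarithmic factors from the network depth $L_g\lesssim\log(n\Epo)$ and sparsity $s_g\asymp n\Epo\phi_n\log(n\Epo)$ are tracked through the approximation bound. This gives exactly \reff{theorem_wgan_excessbayes_res1} and the high-probability analogue. The hardest step is the uniform deviation bound over the \emph{composed} class $\{f\circ g\}$ under dependence: one must simultaneously (a) get the right entropy for the composition without losing powers of $n$ (exploiting $s_f\le s_g$, $L_f\le L_g$), (b) handle that the critics carry a Lipschitz constraint $\|f\|_L\le1$ in addition to the architecture constraint, which interacts non-trivially with the covering-number estimate, and (c) correctly interleave the Berbee coupling with the Talagrand inequality so that the mixing rate $\alpha>1$ is exactly what is needed for the stated tail probabilities. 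The parameter $\Epo$ also needs care: the $Z$-sample has size $n\Epo$, so its contribution to the stochastic error is of smaller order and only the $X$-sample of size $n$ governs the leading $\big(s_f L_f\log(s_f L_f)/n\big)^{1/2}$ term, which is consistent with $\Epo$ appearing only in $\phi_n$ and not in the first summand of the bound.
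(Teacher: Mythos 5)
Your decomposition into terms (I)--(IV) is a telescoping version of exactly the paper's argument: $(\mathrm{I})+(\mathrm{III})\le 2\sup_{g\in\reluG}|\hat W_{1,n}(g)-W_{1,n}(g)|$ and $(\mathrm{IV})\le$ the approximation error, which is precisely the basic inequality $R_n(\hat g_n)\le \sqrt{d}A_n+2E_n$ of Proposition~\ref{basic_inequality_wgan}, and the remaining steps (ReLU approximation of H\"older functions with effective dimension $d_g$, entropy bound for sparse ReLU classes, Berbee-type coupling combined with a Klein--Rio Talagrand inequality for the high-probability statement) match Lemmas~\ref{lem:approx}, \ref{lem:ent}, \ref{lem_upperbound_estimation_wgan} and Lemma~\ref{lem_talagrand_mixing}. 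One small slip: $W_{1,n}$ is $\sqrt{d}$-Lipschitz in $g$ w.r.t.\ $\|\cdot\|_\infty$ (the Euclidean Lipschitz constraint on the $d$-dimensional critic output picks up a $\sqrt{d}$), not $1$-Lipschitz; this is only a constant and does not affect the rate.
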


\textbf{Remark on dependency}. Let us discuss this result in more detail. First note that we basically only need that $\beta_X(k)$ is summable. The specific polynomial rate of the decay only enters the large deviation result, and is negligible if $\alpha \ge 3$. The statistical bounds are obtained by using empirical process theory for absolutely regular $\beta$-mixing sequences and refined versions of Talagrand's inequality for independent variables from \cite{klein2005}.

\textbf{Remark on critic networks}. The rate in \reff{theorem_wgan_excessbayes_res1} decomposes into two terms, where the first term is determined by properties of the critic networks and the second term stems from generator networks. Regarding the first term, the only condition on the critic functions is given in (iv) which asks that the critic networks allow for less non-zero parameters and have less layers than the generator networks. The lack of conditions is clear since there is no a priori  approximation task the critic networks have to fulfill in $W_{1,n}$. However, there is some interest in allowing for a large critic function class $\mathcal{R}(L_f,\mathbf{p}_f,s_f)$ due to the results from Lemma \ref{lemma_w1beta_approx}. This is discussed in more detail in Remark \ref{remark_critic_generator}.

\textbf{Remark on conditions}. Assumptions (i)-(iii) stated in Theorem \ref{theorem_wgan_excessbayes} are conditions on the network structure of the generator networks which are allowed (and needed) to grow with $n$. Basically, the lower bounds on $L_g,\mathbf{p}_g,s_g$ are used to bound the approximation error of finding an element $\tilde g \in \mathcal{R}_G(L_g,\mathbf{p}_g,s_g)$ which approximates $g \in \ssG(d_Z,d_g,\beta,K)$ well,  while the upper bounds control the estimation error.

(i),(iii) ask the generator networks to have approximately $\log_2(n\Epo)$ layers and allow for approximately
\[
    n\Epo \phi_n \log(n\Epo) = (n\Epo)^{\frac{d_g}{2\beta+d_g}}\log(n\Epo)
\]
non-zero parameters (that is, entries in weight matrices and bias vectors). (ii) asks the layers to have a certain minimal width. Letting the minimal width of \emph{all} layers grow polynomially in $n$ seems rather unusual from a practical point of view. This is only due to the approximation technique adopted from \cite{sh} and can be improved. 

\textbf{Remark on convergence rate - dimensionality}. In \cite[Theorem 1]{minimaxwhoelder} (cf. also \cite{nilesweed2019minimax}) it was shown that $\beta$-H\"older smooth densities in the space $\IR^{d}$ can be estimated with a rate not faster than $n^{-\frac{\beta+1}{2\beta+d}}$ with respect to the Wasserstein distance. Note the additional $\beta+1$ in the nominator instead of $\beta$ as it is the case, for instance, in standard nonparametric density estimation. Due to the additional structural assumptions, out method yields (up to a log factor) a convergence rate $\phi_n = n^{-\frac{\beta}{2\beta+d_g}}$ with respect to the modified Wasserstein-distance $W_{1,n}(\hat g_n)$. It does not depend on the underlying dimensionality $d_Z$ of the generation space nor the dimension $d$ of the observation space but only on the reduced dimension $d_g \le d_Z$. Even if $d_Z$ is chosen large (as it may occur in practice), the generator network estimator $\hat g_n$ can adapt to the unknown number $d_g$ of relevant arguments without suffering from a curse of dimension.


\textbf{Remark on convergence rate - generator size}. In practice, along with each sampled batch of data one batch of generated data $Z_{i1}$, $i=1,...,n$ is produced, so that during the first epoch of training it holds that $\Epo = 1$.  In subsequent epochs, the data set of fixed size $n$ is reused, while the number of generated samples keeps growing. If we assume that the generator networks approximate the empirical optimizers at each step, the variable $\Epo$ introduced in the estimator $\hat g_n$ can be roughly seen as the number of training epochs and indicates that more and more realizations of $\IP^{Z}$ are available to train $\hat g_n$. In principle, $\Epo$ can be chosen arbitrarily large, therefore one can use arbitrarily large generator architectures as long as one generates enough samples during training. Then the performance saturates due to the limited data samples $n$ and the corresponding discriminator architecture (cf. \reff{theorem_wgan_excessbayes_res1}), but not due to the generator capacity. However, note that one cannot directly take $\Epo$ as the number of epochs. The reason is that in each epoch, the generator only sees a part of the data $X_i$, $i=1,\dots,n$ (see \Cref{table:WGANGD}).

\begin{rem}[Selection of critic and generator]\label{remark_critic_generator}
    If there exists $g^{*} \in \ssG(d_Z,d_g,\beta,K)$ with $\IP^{g^{*}(Z)} = \IP^{X}$, then the results of Theorem \ref{theorem_wgan_excessbayes},  \reff{theorem_wgan_excessbayes_res1} and Lemma \ref{lemma_w1beta_approx}, \reff{lemma_w1beta_approx_eq1} can be combined. In this case, one could ask for a suitable choice of $\gamma \ge 1$ such that the rates coincide, that is, $a_n = \phi_n$. This then also leads to more precise conditions on the discriminator network through Lemma \ref{lemma_w1beta_approx}. For simplicity, choose $\Epo = 1$. We see that equality is obtained with
    \[
        \frac{\beta}{2\beta+d_g} = \frac{\gamma}{2\gamma+d},
    \]
    which is fulfilled for $\gamma = \beta\frac{d}{d_g}$, and leads to
    \[
        \IE W^{\gamma}_1(\hat g_n) \lesssim \phi_n^{1/2}\log(n)^{3/2}.
    \]
\end{rem}

\subsection{Asymptotic confidence intervals}\label{sec_uncon_conf_intervals}

Based on the weak convergence, one can provide asymptotic confidence sets for $X$ to a given level $\alpha$. For simplicity, suppose that $X$ is one-dimensional. For $N\in\IN$, let $Z_j^{*}, j=1,...,N$ be i.i.d. samples of $\IP^{Z}$, independent of $X_i,Z_{ij}$ used to calculate the WGAN estimator $\hat g_n$ from \reff{wgan_unconditional}. Define the empirical distribution function
\[
    \hat F_{N,n}(x) := \frac{1}{N}\sum_{j=1}^{N}\Ii_{\{\hat g_n(Z_j^{*}) \le x\}}
\]
and let $F_X$ denote the distribution function of $X$. Then the following result holds.

\begin{lem}\label{lemma_weak_convergence_distribution}
    Suppose that $\IP^{X} = \IP^{g^{*}(Z)}$ for some $g^{*} \in \ssG(d_Z,d_g,\beta,K)$ and that $\IP^{X}$ is continuous. Let the assumptions of Lemma \ref{lemma_w1beta_approx} with some $\gamma \ge 1$ and Theorem \ref{theorem_wgan_excessbayes} hold. Then for any $\rho > 0$,
    \[
        \limsup_{n\to\infty}\limsup_{N\to\infty}\IP(|\hat F_{N,n}(X) - F_X(X)| \ge \rho) = 0.
    \]
\end{lem}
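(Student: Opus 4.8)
The plan is to decompose, uniformly in $x\in\IR$,
\[
   |\hat F_{N,n}(x) - F_X(x)| \ \le\ \sup_{y\in\IR}\big|\hat F_{N,n}(y) - \hat F_n(y)\big| \ +\ \sup_{y\in\IR}\big|\hat F_n(y) - F_X(y)\big|,
\]
where $\hat F_n(y):=\IP\big(\hat g_n(Z)\le y\mid \hat g_n\big)$ denotes the distribution function of a generated observation \emph{given} the (random) estimator $\hat g_n$. Since both summands are uniform in $y$, the value of the new observation $X$ plugged in on the left is irrelevant, and it suffices to control the first term as $N\to\infty$ (for fixed $n$) by a Glivenko--Cantelli argument and the second term as $n\to\infty$ using the excess-risk bound together with Lemma \ref{lemma_w1beta_approx}.

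For the first term I would condition on the training data $\mathcal D_n$ (the $\sigma$-algebra generated by the $X_i$ and $Z_{i,j}$), which renders $\hat g_n$ deterministic; then $\hat g_n(Z_1^{*}),\dots,\hat g_n(Z_N^{*})$ are i.i.d.\ with distribution function $\hat F_n$, and the classical Glivenko--Cantelli theorem gives $\sup_y|\hat F_{N,n}(y)-\hat F_n(y)|\to 0$ a.s.\ as $N\to\infty$ conditionally on $\mathcal D_n$. As this holds for $\P$-almost every $\mathcal D_n$, it holds unconditionally, hence in probability, so $\limsup_{N\to\infty}\IP\big(\sup_y|\hat F_{N,n}(y)-\hat F_n(y)|\ge\rho/2\big)=0$ for every fixed $n$.

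For the second term, note first that because $\IP^{X}=\IP^{g^{*}(Z)}$ with $g^{*}\in\ssG(d_Z,d_g,\beta,K)$ and $W_{1,n}(g)\ge 0$ for every $g$ (take $f\equiv 0$), we have $\inf_{g\in\ssG}W_{1,n}(g)=W_{1,n}(g^{*})=0$, hence $R_n(\hat g_n)=W_{1,n}(\hat g_n)$. Theorem \ref{theorem_wgan_excessbayes} then yields $\IE W_{1,n}(\hat g_n)=\IE R_n(\hat g_n)\to 0$, so by Markov's inequality $W_{1,n}(\hat g_n)\to 0$ in probability, and Lemma \ref{lemma_w1beta_approx} (its hypotheses hold, $\gamma\ge 1$) gives $W_1^{\gamma}(\hat g_n)\le W_{1,n}(\hat g_n)+Ca_n^{1/2}\to 0$ in probability, with $a_n=n^{-2\gamma/(2\gamma+d)}\to 0$. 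For fixed $\hat g_n$, $W_1^{\gamma}(\hat g_n)$ equals the integral probability metric between $\nu_n$, the conditional law of $\hat g_n(Z)$ given $\hat g_n$, and $\IP^{X}$, with test class $\{f\in C^{\gamma}([0,1],K):\|f\|_L\le 1\}$ (the one-sided and two-sided suprema coincide, as this class is symmetric). I would then upgrade this to uniform convergence of $\hat F_n$ by a subsequence argument: along any subsequence pass to a further subsequence on which $W_1^{\gamma}(\hat g_n)\to 0$ a.s.; on that almost sure event the deterministic measures $\nu_n$ satisfy $\int f\dif\nu_n\to\int f\dif\IP^{X}$ for every $f$ in that test class and, after rescaling, for every $f\in C^{\gamma}([0,1])$, which is a convergence-determining class (cf.\ the remark after Lemma \ref{lemma_weak_convergence}); hence $\nu_n\Rightarrow\IP^{X}$ weakly. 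Since $\IP^{X}$ is continuous, P\'olya's theorem promotes this to $\sup_y|\hat F_n(y)-F_X(y)|\to 0$ a.s.\ along the subsequence, and the subsequence principle yields $\sup_y|\hat F_n(y)-F_X(y)|\to 0$ in probability, i.e.\ $\limsup_{n\to\infty}\IP\big(\sup_y|\hat F_n(y)-F_X(y)|\ge\rho/2\big)=0$.

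Combining the two bounds: for each fixed $n$, $\limsup_{N\to\infty}\IP(|\hat F_{N,n}(X)-F_X(X)|\ge\rho)\le\IP\big(\sup_y|\hat F_n(y)-F_X(y)|\ge\rho/2\big)$, and letting $n\to\infty$ gives the claim. The main obstacle is the second term: carefully handling the randomness of $\hat g_n$, i.e.\ converting the $L^1$-convergence of the nonnegative excess risk, via Lemma \ref{lemma_w1beta_approx}, into convergence in probability of the random conditional law $\nu_n$ in the $W_1^{\gamma}$-metric, and then into uniform convergence of the conditional distribution functions; the facts that $W_1^{\gamma}$ controls weak convergence (through convergence-determination plus rescaling) and that continuity of $F_X$ allows the uniform upgrade are the delicate points, whereas the Glivenko--Cantelli step and the reduction $\inf_g W_{1,n}(g)=0$ are routine.
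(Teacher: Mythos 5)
Your proposal is correct and follows essentially the same strategy as the paper: decompose $|\hat F_{N,n}-F_X|$ through the intermediate conditional distribution function $\hat F_n(\cdot)=\IP(\hat g_n(Z)\le\cdot\mid\hat g_n)$, use a law-of-large-numbers argument for $N\to\infty$, use $W_1^{\gamma}(\hat g_n)\overset{p}{\to}0$ (via the excess-risk bound and Lemma~\ref{lemma_w1beta_approx}) together with convergence-determination and P\'olya's theorem for $n\to\infty$, and plug in $x=X$. The only difference is organizational: you invoke Glivenko--Cantelli and P\'olya term-by-term in sup-norm and pass to a.s.\ subsequences, whereas the paper first establishes pointwise convergence (via LLN and a smoothed-indicator portmanteau argument carried out directly in probability) and upgrades to uniformity at the end; both routes are valid and rely on the same underlying facts.
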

By the probability integral transform, this shows that $\hat F_{N,n}(X)$ converges in probability to a uniform distribution on $[0,1]$. For fixed $\alpha \in (0,1)$, this justifies that the interval
\begin{equation}
    I_{n,N} := \Big\{x\in \IR: \hat F_{N,n}(x) \in \left(\frac{\alpha}{2},1-\frac{\alpha}{2}\right]\Big\} \label{eq_uncon_conf_int} 
\end{equation}
   
which is built from the empirical $\frac{\alpha}{2}$ and $(1-\frac{\alpha}{2})$ quantile curves of $\hat g_n(Z_j^{*})$, $j=1,...,N$ is an asymptotic $(1-\alpha)$-confidence set for $X$ since
\[
    \IP(X \in I_{n,N}) = \IP\left(\hat F_{N,n}(X) \in \left(\frac{\alpha}{2},1-\frac{\alpha}{2}\right]\right) \approx \IP\left(F_X(X) \in \left(\frac{\alpha}{2},1-\frac{\alpha}{2}\right]\right) = 1-\alpha.
\]

\section{Theoretical results for the conditional WGAN}\label{sec_cond_results}

\subsection{Results for the modified conditional Wasserstein distance}

We now provide similar results as given in Lemma \ref{lemma_w1beta_approx} and Lemma \ref{lemma_weak_convergence} for the conditional WGAN formulation.

In analogy, firstly define,
\[
    W_1^{c,\gamma}(g) := \sup_{f\in C^{\gamma}([0,1]^{d+d_Y},K), \|f\|_L\le 1}\big\{ \IE f(X,Y) - \IE f(g(Z,Y),Y)\big\}.
\]

Since we use the same approximation result \cite{sh}, the connection of $W^c_{1,n}(g)$ to $W_1^{c,\gamma}(g)$ is essentially the same as in the unconditional case and we omit a proof.
\begin{lem}\label{lemma_w1beta_approx_conditional}
    Let $a_n = n^{-\frac{2\gamma}{2\gamma+d+d_Y}}$, and suppose that
    \begin{itemize}
        \item $F \ge 1$,
        \item $L_f \ge \log_2(n)\log_2(4(d+d_Y)\vee 4\gamma)$,
        \item $\min_{i=1,...,L}p_{f,i} \gtrsim na_n$
        \item $s_f \gtrsim \log(n) na_n$,
    \end{itemize}
    where the constants in the asymptotic expression above depend on $\gamma,d,d_Y$. Then there exists some constants $C > 0, K \in (0,1)$ only depending on $\gamma,d,d_Y,F$ such that
    \[
        \sup_{f\in C^{\gamma}([0,1]^{d+d_Y},K), \|f\|_L \le 1}\inf_{\tilde f\in \mathcal{R}_D(L_f,\mathbf{p}_f,s_f)}\|f - \tilde f\|_{\infty} \le C a_n^{1/2}.
    \]
    Especially, for any measurable $g:\IR^{d_Z} \to \IR^d$,
    \begin{equation}
       W_1^{c,\gamma}(g) \le W^c_{1,n}(g) + Ca_n^{1/2}.\label{lemma_w1beta_approx_eq1_cond}
    \end{equation}
\end{lem}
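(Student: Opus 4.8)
The statement is the conditional twin of Lemma~\ref{lemma_w1beta_approx}, so the plan is to repeat that proof with the critic input dimension $d$ replaced by $d+d_Y$, since here the critics act on pairs $(x,y)\in[0,1]^{d+d_Y}$. First I would apply the ReLU approximation bound of \cite{sh} (Theorem~\ref{reluapprox} in the Appendix) to a scalar $\gamma$-H\"older function on $[0,1]^{d+d_Y}$: with $a_n=n^{-\frac{2\gamma}{2\gamma+d+d_Y}}$, a network of depth $L_f\asymp\log_2(n)\log_2(4(d+d_Y)\vee4\gamma)$, minimal width $\gtrsim na_n$ and sparsity $\gtrsim na_n\log(n)$ approximates it in sup-norm at rate $(na_n)^{-\gamma/(d+d_Y)}=a_n^{1/2}$, the logarithmic depth absorbing the residual lower-order error term once the constant in front of $\log_2(n)$ is chosen large enough. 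Taking the H\"older radius $K\in(0,1)$ small enough (depending only on $\gamma,d,d_Y,F$) ensures the constructed network is bounded by $F$ and hence lies in $\mathcal{R}_D(L_f,\mathbf{p}_f,s_f)$, which yields
\[
\sup_{f\in C^{\gamma}([0,1]^{d+d_Y},K),\,\|f\|_L\le1}\ \inf_{\tilde f\in\mathcal{R}_D(L_f,\mathbf{p}_f,s_f)}\|f-\tilde f\|_{\infty}\le Ca_n^{1/2}.
\]

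To deduce \reff{lemma_w1beta_approx_eq1_cond}, I would fix a measurable $g:\IR^{d_Z}\to\IR^{d}$ and an admissible critic $f\in C^{\gamma}([0,1]^{d+d_Y},K)$ with $\|f\|_L\le1$, choose a near-optimal network $\tilde f$ as above, and add and subtract it:
\[
\IE f(X,Y)-\IE f(g(Z,Y),Y)\le\big(\IE\tilde f(X,Y)-\IE\tilde f(g(Z,Y),Y)\big)+2\|f-\tilde f\|_{\infty}\le W^c_{1,n}(g)+2Ca_n^{1/2},
\]
where the last inequality uses that $\tilde f\in\mathcal{R}_D(L_f,\mathbf{p}_f,s_f)$ has $\|\tilde f\|_L\le1$ and is therefore admissible in $W^c_{1,n}(g)$. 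Taking the supremum over $f$ and absorbing the constant $2$ into $C$ completes the proof.

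The one step that is not pure bookkeeping is guaranteeing $\|\tilde f\|_L\le1$: the WGAN objective constrains critics to Lipschitz constant at most $1$, whereas Theorem~\ref{reluapprox} only controls the sup-norm error. Two observations settle this. First, $K$ is already taken small, so $\|f\|_L$ is small uniformly over $C^{\gamma}([0,1]^{d+d_Y},K)$; and since $\gamma\ge1$ the localized Taylor-polynomial construction behind Theorem~\ref{reluapprox} can be arranged so that the first differences of $\tilde f$ stay within $O(a_n^{1/2})$ of those of $f$, giving $\|\tilde f\|_L\le\|f\|_L+O(a_n^{1/2})\le1$ for $n$ large. Alternatively one simply replaces $\tilde f$ by $\tilde f/\max\{1,\|\tilde f\|_L\}$, which still lies in $\mathcal{R}_D(L_f,\mathbf{p}_f,s_f)$ because dividing the output weights by a constant $\ge1$ preserves the weight, sparsity and $F$-boundedness constraints, at the cost of an additional $(1-1/\|\tilde f\|_L)\,\|\tilde f\|_{\infty}=O(Fa_n^{1/2})$ in the sup-norm error, which is again absorbed into $C$. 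No mixing or dependence assumption on $(X_i,Y_i)$ enters — the lemma is a pure approximation statement, exactly as in the unconditional case, which is why the authors omit it.
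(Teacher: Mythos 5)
Your first two paragraphs reproduce the paper's (omitted) argument exactly: apply the approximation result \Cref{reluapprox} with input dimension $r=d+d_Y$ and $N\asymp na_n$, $k\asymp\log_2 n$, obtaining the rate $(na_n)^{-\gamma/(d+d_Y)}=a_n^{1/2}$, then add and subtract $\tilde f$ and take the supremum. The paper itself only states (just before \Cref{lemma_w1beta_approx}) the inequality $W_1^{c,\gamma}(g)\le W_{1,n}^c(g)+2\sup_f\inf_{\tilde f}\|f-\tilde f\|_\infty$ and defers to \cite{sh}; you have filled this in faithfully.

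Your last paragraph isolates a genuine subtlety — the approximating network $\tilde f$ must satisfy $\|\tilde f\|_L\le 1$ to be admissible in $W_{1,n}^c$ — and deserves credit for spotting it, because the paper's displayed inequality silently suppresses this constraint. However, neither of your two fixes actually closes the gap. The first ("the construction can be arranged so that $\|\tilde f\|_L\le\|f\|_L+O(a_n^{1/2})$") is not what \Cref{reluapprox} gives: the Lipschitz bound in the theorem is $\mathrm{Lip}(N,k)=2\gamma F(K+1)e^r(24r^6 2^r N2^{-k}+3r)$, which with $N2^{-k}\asymp a_n\to 0$ tends to $6\gamma F(K+1)e^r r$, a constant that is strictly greater than $1$ for $\gamma\ge 1$, $F\ge 1$, $r\ge 1$, regardless of how small $K$ is chosen. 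The second fix (rescale by $\max\{1,\|\tilde f\|_L\}$) is circular: the induced sup-norm error $(1-1/\|\tilde f\|_L)\|\tilde f\|_\infty$ is $O(a_n^{1/2})$ only if $\|\tilde f\|_L=1+O(a_n^{1/2})$, which is exactly what the rescaling was supposed to make unnecessary to prove; since $\|\tilde f\|_\infty$ is bounded away from zero (it is close to $\|f\|_\infty$) and $\|\tilde f\|_L$ is bounded by the constant above, the rescaling actually costs a non-vanishing constant. To be fair, the paper does not resolve this either — \Cref{remark_weakconvergence} explicitly acknowledges that sharp Lipschitz bounds for $\mathcal{R}(L_f,\mathbf{p}_f,s_f)$ are out of scope — so your reconstruction is as complete as the authors' intended argument; you should just not present the rescaling as if it settled the matter.
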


In the case that $\IP^{X,Y} = \IP^{g^{*}(Z,Y),Y}$ with some $g^{*} \in \ssG^c(d_Z,d_Y,D,d_g,\beta,K)$, the results for the excess Bayes risk \reff{definition_excessbayes_conditional} presented in the following Section \ref{sec:cond_set} can be used to derive weak convergence with the help of the following lemma.

\begin{lem}\label{lemma_weak_convergence_conditional} Suppose that $\IP^{X,Y} = \IP^{g^{*}(Z,Y),Y}$ for some $g^{*} \in \ssG^c(d_Z,d_Y,D,d_g,\beta,K)$ and let the assumptions of Lemma \ref{lemma_w1beta_approx} hold with some $\gamma \ge 1$. Let $(\hat g^c_n)_{n\in\IN}$ be a sequence of random variables with $\IE W^c_{1,n}(\hat g_n^c) \to 0$. Then
\begin{equation}
    \hat g^c_n(Z,Y) \overset{d}{\to} g^{*}(Z,Y) = X.\label{convergence_weak_joint}
\end{equation}
\end{lem}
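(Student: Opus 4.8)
\textbf{Proof proposal for Lemma \ref{lemma_weak_convergence_conditional}.}

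The plan is to mirror the argument for the unconditional case (Lemma \ref{lemma_weak_convergence}), working on the joint law of $(X,Y)$ in $[0,1]^{d+d_Y}$ and then reading off the claimed convergence \reff{convergence_weak_joint}. First I would invoke Lemma \ref{lemma_w1beta_approx_conditional}: under the stated assumptions (applied with the ambient dimension $d+d_Y$) there are constants $C>0$ and $K\in(0,1)$ such that for every measurable $g$,
\[
    W_1^{c,\gamma}(g) \le W^c_{1,n}(g) + C a_n^{1/2}, \qquad a_n = n^{-\frac{2\gamma}{2\gamma+d+d_Y}} \to 0.
\]
Applying this to the random generator $\hat g_n^c$ and taking expectations gives $\IE W_1^{c,\gamma}(\hat g_n^c) \le \IE W^c_{1,n}(\hat g_n^c) + C a_n^{1/2} \to 0$ by hypothesis. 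Hence $W_1^{c,\gamma}(\hat g_n^c) \to 0$ in $L^1$, so along a subsequence almost surely; since it suffices to prove convergence in distribution along every subsequence, I may as well assume $W_1^{c,\gamma}(\hat g_n^c) \to 0$ along the full sequence (the argument below is deterministic in the realization once this convergence is granted).

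Next I would translate $W_1^{c,\gamma}(\hat g_n^c) \to 0$ into weak convergence of the joint laws. By definition $W_1^{c,\gamma}(\hat g_n^c) = \sup_{f}\{\IE f(X,Y) - \IE f(\hat g_n^c(Z,Y),Y)\}$ over $f \in C^\gamma([0,1]^{d+d_Y},K)$ with $\|f\|_L\le 1$; note that replacing $f$ by $-f$ shows the supremum also controls $|\IE f(\hat g_n^c(Z,Y),Y) - \IE f(X,Y)|$ (up to the rescaling needed to land back in the ball of radius $K$, which only costs a fixed constant factor since $K$ is fixed). Thus $\IE f(\hat g_n^c(Z,Y),Y) \to \IE f(X,Y)$ for every $f$ in a fixed ball of $C^\gamma([0,1]^{d+d_Y},K)$. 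Since $[0,1]^{d+d_Y}$ is compact, the class $C^\gamma([0,1]^{d+d_Y},K)$ is dense in $C([0,1]^{d+d_Y})$ in the uniform norm up to scaling (Stone--Weierstrass: polynomials, suitably rescaled, lie in this class), so it is convergence-determining on $[0,1]^{d+d_Y}$; a standard $3\varepsilon$-argument upgrades pointwise convergence of integrals against this dense class to convergence against all bounded continuous $f$. Therefore
\[
    (\hat g_n^c(Z,Y),Y) \overset{d}{\longrightarrow} (X,Y)
\]
in $[0,1]^{d+d_Y}$. Since we are assuming $\IP^{X,Y}=\IP^{g^*(Z,Y),Y}$, the limit equals $(g^*(Z,Y),Y)$ in distribution, which is exactly \reff{convergence_weak_joint}; in particular the first-marginal statement $\hat g_n^c(Z,Y) \overset{d}{\to} X$ follows by the continuous mapping theorem applied to the projection onto the first $d$ coordinates. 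Finally, to pass from the $L^1$-convergence-along-subsequences back to convergence in distribution for the full sequence, I would note that weak convergence is metrizable (e.g.\ by the bounded-Lipschitz metric), so a sequence converges iff every subsequence has a further subsequence converging to the same limit, which is what the subsequence argument supplies.

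The main obstacle is the care needed in the middle step: $W_1^{c,\gamma}$ only bounds a one-sided supremum over a bounded ball of H\"older functions, and one must argue (i) that the two-sided bound holds up to constants, and (ii) that this bounded H\"older ball is genuinely convergence-determining on the compact cube — i.e.\ that uniform approximation of an arbitrary $f\in C([0,1]^{d+d_Y})$ by elements of $C^\gamma([0,1]^{d+d_Y},K)$ (after rescaling) is possible, so that the $3\varepsilon$-argument goes through with the Lipschitz and H\"older norm constraints respected. Everything else is bookkeeping: passing expectations through, the subsequence trick to convert $L^1$ convergence to a.s.\ convergence, and the continuous mapping theorem for the marginal. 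Since Lemma \ref{lemma_w1beta_approx_conditional} already encapsulates the neural-network approximation input, no further approximation-theoretic work is required here.
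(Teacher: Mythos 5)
Your overall route matches the paper's: invoke Lemma~\ref{lemma_w1beta_approx_conditional} to pass from $W_{1,n}^c$ to $W_1^{c,\gamma}$, and then use that the Lipschitz ball inside $C^\gamma([0,1]^{d+d_Y},K)$ is convergence-determining. Two of your side remarks differ from the paper in ways worth flagging. First, the subsequence detour (``$L^1$ convergence $\Rightarrow$ a.s.\ along a subsequence $\Rightarrow$ deterministic argument'') is both unnecessary and slightly gappy as written: working along a subsequence where $W_1^{c,\gamma}(\hat g_n^c(\omega))\to 0$ a.s.\ only yields, for a.e.\ realization $\omega$, the \emph{conditional} weak convergence of $(\hat g_n^c(\omega)(Z,Y),Y)$; you still need a dominated-convergence step over $\omega$ to pass to the unconditional law of $(\hat g_n^c(Z,Y),Y)$, which you do not state. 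The paper sidesteps all of this by using the tower property directly, observing that for any fixed $f$ in the class, $|\IE f(X,Y)-\IE f(\hat g_n^c(Z,Y),Y)| \le \IE W_1^{c,\gamma}(\hat g_n^c)\to 0$, giving the unconditional convergence of integrals in one line. Second, the ``rescaling needed to land back in the ball of radius $K$'' when you replace $f$ by $-f$ is a non-issue: the H\"older ball $C^\gamma([0,1]^{d+d_Y},K)$ and the constraint $\|f\|_L\le 1$ are symmetric under $f\mapsto -f$, so the supremum is already two-sided. Finally, your Stone--Weierstrass argument for the convergence-determining step is a viable alternative to the paper's construction, but as written it is a bit imprecise (a high-degree polynomial does not lie in a fixed $C^\gamma$-ball; only a suitably scaled multiple does, and one must track how the scaling interacts with the $3\varepsilon$-argument). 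The paper instead builds explicit $C^\infty$ bump approximations $\rho_{\varepsilon,A}$ of closed-set indicators and scales them by $d(\varepsilon)$ to fit into $C^\gamma([0,1]^{d+d_Y},1)$, then runs a Portmanteau argument; this is more concrete and avoids invoking density.
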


The lemma basically follows from \Cref{lemma_w1beta_approx_conditional} and the fact that $C^{\gamma}([0,1]^{d+d_Y})$ forms a convergence-determining class. Remark \ref{remark_weakconvergence} applies here as well. Moreover, from \reff{convergence_weak_joint} one directly obtains the convergence of $\IP^{\hat g^c_n(Z,y)}$ towards the conditional distribution $\IP^{X|Y = y}$.

\subsection{Excess Bayes risk}\label{sec:cond_set}

We now provide a result for the excess Bayes risk of the conditional WGAN,
\begin{equation}
    R_n^c(g) := W_{1,n}^c(g) -  \inf_{g\in \ssG(d_Z,d_Y,D,d_g,\beta,K)}W_{1,n}^c(g).\label{definition_excessbayes_conditional}
\end{equation}

\begin{thm}\label{theorem_wgan_conditional_excessbayes}
Let $\phi_n = n^{-\frac{2\beta}{2\beta + d_g}}$ and $\tilde \beta \ge \frac{D}{d_g}\beta$. Suppose that $F \ge K \vee 1$, and 
    \begin{itemize}
        \item[(i)] $\log_2(n) \big(2\log_2(4d_g \vee 4\beta)+\log_2(4 D \vee 4 \tilde \beta)\big) \le L_g \lesssim \log(n)$,
        \item[(ii)] $min_{i=1,...,L_g}p_{g,i} \gtrsim n\phi_n$,
        \item[(iii)] $s_g \asymp n\phi_n \log(n)$
        \item[(iv)] $L_f \le L_g$, $s_f \le s_g$.
    \end{itemize}
    Suppose that there exist constants $\kappa > 1, \alpha > 1$ such that for all $k\in\IN$, $\beta_{X,Y}(k) \le \kappa\cdot k^{-\alpha}$.
    Then
    \begin{equation}
        \IE R^c_n(\hat g^c_n) \lesssim \Big(\frac{s_f L_f \log(s_f L_f)}{n}\Big)^{1/2} + \phi_n^{1/2}\log(n)^{3/2},\label{theorem_wgan_conditional_excessbayes_res1}
    \end{equation}
    and with probability at least $1-4n^{-1}-2(\frac{\log(n)}{n})^{\frac{\alpha-1}{2}}$,
    \[
        R^c_n(\hat g^c_n) \lesssim \Big(\frac{s_f L_f \log(s_f L_f)}{n}\Big)^{1/2} + \phi_n^{1/2}\log(n)^{3/2} + \Big(\frac{\log(n)}{n}\Big)^{1/2},
    \]
    where the bounding constants may depend on characteristics of $(X_1, Y_1)$ and $\kappa,\alpha,d,d_Z,d_Y, D, d_g,\beta,\tilde \beta,K,F$.
\end{thm}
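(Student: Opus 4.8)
The plan is to follow the same route as in the proof of Theorem \ref{theorem_wgan_excessbayes}, the only structurally new ingredient being the approximation of the encoder--decoder generator class of Definition \ref{definition_c_autoencoder}. Abbreviate $A(f,g) := \IE f(X,Y) - \IE f(g(Z,Y),Y)$ and $\hat A(f,g) := \tfrac1n\sum_{i=1}^n\{f(X_i,Y_i) - f(g(Z_i,Y_i),Y_i)\}$, so that $W_{1,n}^c(g)=\sup_f A(f,g)$ and $\hat W_{1,n}^c(g)=\sup_f\hat A(f,g)$ with the suprema over $f\in\mathcal{R}(L_f,\mathbf{p}_f,s_f)$, $\|f\|_L\le1$. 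I would fix $\epsilon>0$, choose $\bar g\in\ssG^c(d_Z,d_Y,D,d_g,\beta,K)$ with $W_{1,n}^c(\bar g)\le\inf_{g}W_{1,n}^c(g)+\epsilon$, and let $\tilde g\in\mathcal{R}(L_g,\mathbf{p}_g,s_g)$ be a network approximation of $\bar g$ (constructed in the next step). Since $\hat g_n^c$ minimizes $\hat W_{1,n}^c$, the bracket $\hat W_{1,n}^c(\hat g_n^c)-\hat W_{1,n}^c(\tilde g)$ is $\le 0$, so
\[
R_n^c(\hat g_n^c)\le\big[W_{1,n}^c(\hat g_n^c)-\hat W_{1,n}^c(\hat g_n^c)\big]+\big[\hat W_{1,n}^c(\tilde g)-W_{1,n}^c(\tilde g)\big]+\big[W_{1,n}^c(\tilde g)-W_{1,n}^c(\bar g)\big]+\epsilon,
\]
where the third bracket is the approximation error, the first two are stochastic errors, and $\epsilon\downarrow0$ at the end.

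\textbf{Approximation error.} Since every admissible critic is $1$-Lipschitz in its first block of arguments, $W_{1,n}^c(\tilde g)-W_{1,n}^c(\bar g)\le\sup_f\{A(f,\tilde g)-A(f,\bar g)\}\lesssim\IE|\bar g(Z,Y)-\tilde g(Z,Y)|$. Using the factorization $\bar g=\bar g_{dec}\circ\bar g_{enc,1}\circ\bar g_{enc,0}$, I would approximate each of the three maps separately by sparse ReLU networks via Theorem \ref{reluapprox} (from \cite{sh}): the $d_g$-variate $C^\beta$ components of $\bar g_{enc,0}$ and of $\bar g_{dec}$ admit approximations of sup-norm accuracy $\lesssim\phi_n^{1/2}=n^{-\beta/(2\beta+d_g)}$ with depth $\lesssim\log_2(n)\log_2(4d_g\vee4\beta)$ and $\lesssim n\phi_n\log n$ active parameters, while the $D$-variate $C^{\tilde\beta}$ map $\bar g_{enc,1}$ admits one of accuracy $\lesssim n^{-\tilde\beta/(2\tilde\beta+D)}$ with depth $\lesssim\log_2(n)\log_2(4D\vee4\tilde\beta)$ and $\lesssim n\phi_n\log n$ active parameters. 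The hypothesis $\tilde\beta\ge\tfrac{D}{d_g}\beta$ is precisely what gives $n^{-\tilde\beta/(2\tilde\beta+D)}\le\phi_n^{1/2}$ (cross-multiplying $\tfrac{\tilde\beta}{2\tilde\beta+D}\ge\tfrac{\beta}{2\beta+d_g}$ yields $\tilde\beta d_g\ge\beta D$), so the bottleneck map does not slow down the rate. Composing the three networks adds their depths — yielding the constant $2\log_2(4d_g\vee4\beta)+\log_2(4D\vee4\tilde\beta)$ in (i) — and adds their sparsities; as $\beta,\tilde\beta\ge1$ all three limit maps are Lipschitz, so the composition errors telescope, and after the usual truncations keeping intermediate outputs in the relevant cubes (legitimate since $F\ge K\vee1$) one obtains $\tilde g\in\mathcal{R}(L_g,\mathbf{p}_g,s_g)$ within the architecture of (i)--(iii) with $\IE|\bar g(Z,Y)-\tilde g(Z,Y)|\lesssim\phi_n^{1/2}\log(n)$. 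This is the step that genuinely differs from the unconditional case, where a single use of the approximation theorem suffices.

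\textbf{Stochastic error.} For the first stochastic term, $W_{1,n}^c(\hat g_n^c)-\hat W_{1,n}^c(\hat g_n^c)\le\sup_{f,g}\{A(f,g)-\hat A(f,g)\}$, and I would split $A-\hat A$ into the part depending on $f$ only through $(X,Y)$ and the part depending on $(f,g)$ through $(Z,Y)$:
\[
\sup_{f,g}\{A(f,g)-\hat A(f,g)\}\le\sup_f\Big|\tfrac1n\textstyle\sum_i f(X_i,Y_i)-\IE f(X,Y)\Big|+\sup_{f,g}\Big|\tfrac1n\textstyle\sum_i f(g(Z_i,Y_i),Y_i)-\IE f(g(Z,Y),Y)\Big|.
\]
The first supremum is an empirical process indexed by $\mathcal{R}(L_f,\mathbf{p}_f,s_f)$ on the $\beta$-mixing data $(X_i,Y_i)$; by the sup-norm entropy bound $\log\mathcal{N}(\delta,\mathcal{R}(L,\mathbf{p},s,F),\|\cdot\|_\infty)\lesssim sL\log(n/\delta)$ of \cite{sh}, chaining, and the empirical-process inequalities for absolutely regular sequences of \cite{betanorm,dl}, its expectation is $\lesssim(s_fL_f\log(s_fL_f)/n)^{1/2}$. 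The second supremum is indexed by the composed class $\{(z,y)\mapsto f(g(z,y),y)\}$, whose $\delta$-covering number is bounded by the product of those of $\mathcal{R}_D$ and $\mathcal{R}_G$ (using $\|f_1(g_1(\cdot),\cdot)-f_2(g_2(\cdot),\cdot)\|_\infty\le\|f_1-f_2\|_\infty+\|g_1-g_2\|_\infty$ for $\|f\|_L\le1$), hence has log-entropy $\lesssim(s_fL_f+s_gL_g)\log(n/\delta)$; since $s_gL_g\asymp n\phi_n\log(n)^2$ under (iii)--(iv), this contributes $\lesssim\phi_n^{1/2}\log(n)^{3/2}$. The second stochastic term $\hat W_{1,n}^c(\tilde g)-W_{1,n}^c(\tilde g)$ involves only the fixed network $\tilde g$ and is dominated by the same bounds. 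Combined with the approximation bound this gives \reff{theorem_wgan_conditional_excessbayes_res1}. For the high-probability statement I would couple $(X_i,Y_i)$ with independent blocks (block length tuned against the rate $\beta_{X,Y}(k)\le\kappa k^{-\alpha}$, which contributes the exceptional probability $2(\log(n)/n)^{(\alpha-1)/2}$), then apply the refined Talagrand-type inequalities of \cite{bousquet_bennett,klein2005} to the supremum over the independent blocks, obtaining the additional fluctuation term $(\log(n)/n)^{1/2}$ on the event of probability at least $1-4n^{-1}-2(\log(n)/n)^{(\alpha-1)/2}$.

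\textbf{Main obstacle.} I expect the approximation step to be the hard part: propagating the three composition errors through the encoder--decoder factorization while simultaneously verifying that the depth, width and sparsity of the composed $\tilde g$ fit the budget (i)--(iii) — in particular that $\tilde\beta\ge\tfrac{D}{d_g}\beta$ is the exact threshold for the bottleneck map not to spoil the rate $\phi_n^{1/2}$, and that the constant $2\log_2(4d_g\vee4\beta)+\log_2(4D\vee4\tilde\beta)$ in (i) is precisely the sum of the depths demanded by the three sub-approximations. By contrast, the empirical-process and concentration parts carry over almost verbatim from the unconditional Theorem \ref{theorem_wgan_excessbayes}, with $(X_i)$ replaced by $(X_i,Y_i)$ and the critic domain enlarged to $[0,1]^{d+d_Y}$, so there only bookkeeping of the extra dimension is needed.
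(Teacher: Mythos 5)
Your proposal mirrors the paper's actual proof: basic inequality separating approximation and estimation error (Propositions \ref{basic_inequality_wgan}/\ref{basic_inequality_cwgan}), component-wise application of the Schmidt--Hieber approximation theorem to the encoder--decoder factorization and composition of the three sub-networks (Lemma \ref{lem:c_approx}), empirical-process bounds for the two suprema in the estimation error via the entropy of the critic class and of the composed class (Lemmas \ref{lem:ent}, \ref{lem_upperbound_estimation_wgan_conditional}), and the coupling-plus-Talagrand argument for the high-probability version (Lemmas \ref{lem_talagrand_mixing}, \ref{lem_upperbound_estimation_wgan_conditional}). Your cross-multiplication correctly identifies $\tilde\beta d_g\ge\beta D$ as exactly the condition ensuring the bottleneck map $g_{enc,1}$ does not slow the rate, which is the content of the equality $\max_i N^{-\beta_i/t_i}=N^{-\beta/d_g}$ in \reff{approx_error_eq}.

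The one place where your route genuinely diverges is the entropy bound for the composed class $\{(z,y)\mapsto f(g(z,y),y)\}$. You exploit the $1$-Lipschitz constraint on the critic to bound the covering number by the product of the covering numbers of $\mathcal{R}(L_f,\mathbf{p}_f,s_f)$ and $\mathcal{R}(L_g,\mathbf{p}_g,s_g)$, whereas the paper instead exhibits an explicit embedding of the composed class into a single larger ReLU class $\relu(L_{comp},\mathbf{p}_{comp},s_{comp})$ (by carrying $y$ through $d_Y$ extra neurons per generator layer) and then applies Lemma \ref{shentr} once. Both arguments give log-covering numbers of the same order $\asymp(s_f L_f+s_g L_g)\log(1/\delta)$, so the final rate is unchanged; your version is a bit more elementary, while the paper's embedding has the advantage of not needing the Lipschitz constraint at this stage and of slotting directly into Lemma \ref{lem:ent}. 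Two small slips, neither of which affects the result: (a) the accuracy you state for the bottleneck approximation, $n^{-\tilde\beta/(2\tilde\beta+D)}$, is the rate obtained with its ``own'' parameter budget $N\asymp n^{D/(2\tilde\beta+D)}$, whereas with the budget $N\asymp n\phi_n$ actually used the accuracy is $N^{-\tilde\beta/D}$; the hypothesis $\tilde\beta\ge\tfrac{D}{d_g}\beta$ makes both $\le\phi_n^{1/2}$, so the conclusion stands; (b) your approximation bound carries a spurious $\log n$ factor from conflating the approximation parameter $N$ with the sparsity $s\asymp N\log n$, but since the theorem's rate already has $\log(n)^{3/2}$ this is harmless.
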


All remarks for \Cref{theorem_wgan_excessbayes} apply here as well.

\subsection{Asymptotic confidence intervals}
\label{sec_asympt_conf_cwgan}

For simplicity, suppose that $X$ is one-dimensional. For $N\in\IN$, let $Z_j^{*}, j=1,...,N$ be i.i.d. samples of $\IP^{Z}$, independent of $X_i,Y_i,Z_{i}$ used to calculate the WGAN estimator $\hat g_n^c$ from \reff{wgan_conditional}. Define the empirical distribution function
\[
    \hat F_{N,n}^c(x|y) := \frac{1}{N}\sum_{j=1}^{N}\Ii_{\{\hat g_n^c(Z_j^{*},y) \le x\}}
\]
and let $F_{X}(x|y) = \IP(X \le x|Y = y)$ denote the distribution function of $X$ conditional on $Y = y$. The proof of the following result is similar to Lemma \ref{lemma_weak_convergence_conditional} and therefore omitted.

\begin{lem}\label{lemma_weak_convergence_distribution_conditional}
    Suppose that $\IP^{(X,Y)} = \IP^{(g^{*}(Z,Y),Y)}$ for some $g^{*} \in \ssG(d_Z,d_Y,d_g,D,\beta,K)$ and that $F_X(x|y)$ is continuous for $\IP^{Y}$-a.e. $y$. Let the assumptions of Lemma \ref{lemma_w1beta_approx_conditional} with some $\gamma \ge 1$ and Theorem \ref{theorem_wgan_conditional_excessbayes} hold. Then for any $\rho > 0$ and for $\IP^{Y}$-a.e. $y$,
    \[
        \limsup_{n\to\infty}\limsup_{N\to\infty}\IP\big(\big|\hat F_{N,n}(X|y) - F_X(X|y)\big| \ge \rho\big|Y = y\big) = 0.
    \]
\end{lem}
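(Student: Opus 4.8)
The plan is to mirror the unconditional argument (Lemma~\ref{lemma_weak_convergence_distribution}) in three stages: first turn the excess-risk bound into weak convergence of the fitted conditional laws, then pass to uniform convergence of the conditional distribution functions via Pólya's theorem, and finally control the inner limit in $N$ by Glivenko--Cantelli.

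\emph{Stage 1 (excess risk $\Rightarrow$ joint weak convergence in probability).} Since $g^*\in\ssG^c(d_Z,d_Y,D,d_g,\beta,K)$ satisfies $\IP^{X,Y}=\IP^{g^*(Z,Y),Y}$, we have $\IE f(g^*(Z,Y),Y)=\IE f(X,Y)$ for every critic $f$, hence $W_{1,n}^c(g^*)=0$, so $\inf_{g}W_{1,n}^c(g)=0$ and $R_n^c(\hat g_n^c)=W_{1,n}^c(\hat g_n^c)$. By Theorem~\ref{theorem_wgan_conditional_excessbayes} the right-hand side of \reff{theorem_wgan_conditional_excessbayes_res1} vanishes under (i)--(iv) (using $s_f\le s_g\asymp n\phi_n\log n$, $L_f\le L_g\lesssim\log n$, $\phi_n\to0$), so $\IE W_{1,n}^c(\hat g_n^c)\to0$ and $W_{1,n}^c(\hat g_n^c)\to0$ in probability. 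As $\gamma\ge1$ gives $a_n=n^{-2\gamma/(2\gamma+d+d_Y)}\to0$, Lemma~\ref{lemma_w1beta_approx_conditional} yields $W_1^{c,\gamma}(\hat g_n^c)\le W_{1,n}^c(\hat g_n^c)+Ca_n^{1/2}\to0$ in probability; since $W_1^{c,\gamma}$ is an integral probability metric over the convergence-determining class $C^\gamma([0,1]^{d+d_Y})$, this says $\IP^{(\hat g_n^c(Z,Y),Y)}$ converges weakly to $\IP^{(X,Y)}$, in probability over the estimator.

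\emph{Stage 2 (disintegration and Pólya).} Using $Z\perp Y$ and the identity $\IP^{X\mid Y=y}=\IP^{g^*(Z,y)}$, the goal is to deduce $\IP^{\hat g_n^c(Z,y)}\overset{d}{\to}\IP^{X\mid Y=y}$ for $\IP^Y$-a.e.\ $y$, in probability. I would argue along subsequences: given an arbitrary subsequence, pass to a further one on which $W_1^{c,\gamma}(\hat g_n^c)\to0$ almost surely; on that event the realized generators form a deterministic sequence whose joint laws converge weakly to $\IP^{(X,Y)}$, from which one extracts convergence of the conditional laws for $\IP^Y$-a.e.\ $y$; since $F_X(\cdot\mid y)$ is continuous for $\IP^Y$-a.e.\ $y$, Pólya's uniform convergence theorem upgrades this to $\sup_x|F_{\hat g_n^c}(x\mid y)-F_X(x\mid y)|\to0$, where $F_{\hat g_n^c}(\cdot\mid y)$ is the law of $\hat g_n^c(Z,y)$ under $\IP^Z$. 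Writing $c_n(y):=\IP\big(\sup_x|F_{\hat g_n^c}(x\mid y)-F_X(x\mid y)|\ge\rho/2\big)$ for the probability over $\hat g_n^c$, the subsequence principle then gives $c_n(y)\to0$ for $\IP^Y$-a.e.\ $y$.

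\emph{Stage 3 (limit in $N$).} Conditionally on $\hat g_n^c$ (built from the training sample only, hence independent of the fresh $Z_j^*$ and of $Y$), the variables $\hat g_n^c(Z_j^*,y)$, $j=1,\dots,N$, are i.i.d.\ with distribution function $F_{\hat g_n^c}(\cdot\mid y)$, so Glivenko--Cantelli gives $\sup_x|\hat F_{N,n}^c(x\mid y)-F_{\hat g_n^c}(x\mid y)|\to0$ a.s.\ as $N\to\infty$. With the triangle inequality and reverse Fatou,
\[
\limsup_{N\to\infty}\IP\big(|\hat F_{N,n}^c(X\mid y)-F_X(X\mid y)|\ge\rho\,\big|\,Y=y\big)\ \le\ c_n(y),
\]
and the right-hand side tends to $0$ for $\IP^Y$-a.e.\ $y$ by Stage~2; applying $\limsup_n$ finishes the proof. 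The main obstacle is the disintegration in Stage~2: joint weak convergence with identical $Y$-marginals does not in general force the conditional laws to converge for $\IP^Y$-a.e.\ $y$ (testing product functions $h(x)\psi(y)$ only gives weak, non-pointwise convergence of $y\mapsto\IE_Z h(\hat g_n^c(Z,y))$), so one must exploit the specific structure here — $Z\perp Y$, the fixed target disintegration $\IP^{X\mid Y=y}=\IP^{g^*(Z,y)}$, and the control of $W_1^{c,\gamma}$ over integrals against smooth test functions — together with an almost-sure-representation and subsequence argument, so that the bound genuinely survives both the order $\limsup_n\limsup_N$ and the ``$\IP^Y$-a.e.\ $y$'' qualifier.
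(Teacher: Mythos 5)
Your overall plan — convert the excess-risk bound into $W_1^{c,\gamma}(\hat g_n^c)\overset{p}{\to}0$, pass from this to convergence of the conditional distribution functions $F_{\hat g_n^c}(\cdot\mid y)\to F_X(\cdot\mid y)$, and then handle the inner $N$-limit via Glivenko--Cantelli (the paper uses a pointwise law of large numbers plus a P\'olya decomposition; these are interchangeable here) — is precisely the conditional analogue of the paper's proof of Lemma \ref{lemma_weak_convergence_distribution}, and Stages~1 and~3 are sound.

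The genuine gap is in Stage~2, and you are right to flag it; however, the subsequence device you propose does not close it, and you should not state it as if it does. The quantity $W_1^{c,\gamma}(\hat g_n^c)$ controls $\sup_{\|f\|_L\le1,\,f\in C^\gamma}\big|\IE f(X,Y)-\IE f(\hat g_n^c(Z,Y),Y)\big|$, i.e.\ a metric on the \emph{joint} laws, and weak convergence of joint laws with a common $Y$-marginal does not in general imply convergence of the $Y$-conditional laws for $\IP^Y$-a.e.\ $y$. A standard deterministic counterexample: with $Y\sim U[0,1]$ and $X_n:=nY\bmod 1$ one has $\IP^{(X_n,Y)}\to U[0,1]^2$ weakly by Weyl equidistribution, yet $\IP^{X_n\mid Y=y}=\delta_{ny\bmod 1}$ converges for no irrational $y$. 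Since the failure is already deterministic, passing to a subsequence on which $W_1^{c,\gamma}(\hat g_n^c)\to0$ almost surely gains nothing: the sentence ``from which one extracts convergence of the conditional laws for $\IP^Y$-a.e.\ $y$'' is exactly the unproven step. To repair it one would need an additional ingredient that the stated hypotheses do not obviously provide — e.g.\ an equicontinuity/Lipschitz bound on $y\mapsto \IP^{\hat g_n^c(Z,y)}$ uniform in $n$, or a quantified localization with mollified test functions $h(x)\varphi_\delta(y-y_0)$ combined with a rate for $W_1^{c,\gamma}(\hat g_n^c)$ and a Lebesgue-point argument. For fairness, note that the paper itself elides this: it claims after Lemma \ref{lemma_weak_convergence_conditional} that joint weak convergence ``directly'' yields $\IP^{\hat g_n^c(Z,y)}\to\IP^{X\mid Y=y}$ and omits the present proof as ``similar,'' so your diagnosis pinpoints a real lacuna rather than a misunderstanding on your part.
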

As before in the case of the unconditional WGAN, we can now construct an asymptotic $(1-\alpha)$ confidence set for $X$ conditional on $Y = y$. For fixed $\alpha \in (0,1)$, let
\begin{equation}
    I_{n,N}(y) := \Big\{x\in \IR: \hat F_{N,n}(x|y) \in \Big(\frac{\alpha}{2},1-\frac{\alpha}{2}\Big]\Big\}.\label{conf_cwgan}
\end{equation}
Then for large $n,N$, one has
\begin{eqnarray*}
    \IP(X \in I_{n,N}(y)|Y = y) &=& \IP(\hat F_{N,n}(X|y) \in \left(\frac{\alpha}{2},1-\frac{\alpha}{2}\right]|Y = y)\\
    &\approx& \IP(F_{X}(X|y) \in \left(\frac{\alpha}{2},1-\frac{\alpha}{2}\right]|Y = y) = 1-\alpha.
\end{eqnarray*}

\section{High-dimensional time series forecasting}
\label{sec_timeseries}

Earlier practical approaches of \cite{tcgan,tcwgan} have shown that conditional WGANs can be used to determine distributional forecasts of time series. In this section we use our results to provide asymptotic confidence intervals.

Suppose that we have given a time series $A_i \in \IR^{p}$, $i=-r+1,...,n$ with \emph{continuous} distribution which is absolutely regular $\beta$-mixing with coefficients $\beta_A(k)$, $k \ge 0$. We are interested in forecasting a statistic
\[
    T(A_{i}),\quad\quad T:\IR^d \to \IR \text{ continuous},
\]
conditional on the finite past
\[
    \IA_{i-1} := (A_{i-1},...,A_{i-r}) \in \IR^{pr},
\]
where $r \in\IN$ denotes the number of lags considered. Let us furthermore assume that there exists some $\beta \ge 1, K > 0$, $d_g \in\IN$ and $g^{*c} \in \ssG^c(d_Z,pr,d_g,\beta,K)$ (cf. Definition \ref{definition_c_autoencoder}) such that
\[
    \IP^{T(A_r), \IA_{r-1}} = \IP^{g^{*}(Z,\IA_{r-1})},
\]
that is, the distribution of $T(A_r)$ is obtained from $A_{r-1},...,A_1$ and some random noise $Z$. Let $\hat g_n^c$ denote the conditional WGAN estimator from \reff{wgan_conditional}, that is,
\[
    \hat g_n^c = \argmin_{g\in \mathcal{R}(L_g,\mathbf{p}_g,s_g)}\hat W_{1,n}^c(g)
\]
with
\[
    \hat W_{1,n}^c(g) := \sup_{f\in \mathcal{R}(L_f,\mathbf{p}_f,s_f), \|f\|_L \le 1}\frac{1}{n}\sum_{i=1}^{n}\big\{f(T(A_i),\IA_{i-1}) - f(g(Z_i,\IA_{i-1}),\IA_{i-1})\big\}.
\]
With the above definitions, $(T(A_i),\IA_{i-1})$ is absolutely regular $\beta$-mixing with coefficients $\beta(k) = \beta_A((k-r)\vee 0)$. Then Theorem \ref{theorem_wgan_conditional_excessbayes} implies:

\begin{cor}
    Under the conditions (i)-(iv) of Theorem \ref{theorem_wgan_conditional_excessbayes},
    \[
        \IE W_{1,n}^c(\hat g_n^c) \lesssim \Big(\frac{s_f L_f \log(s_f L_f)}{n}\Big)^{1/2} + n^{-\frac{\beta}{2\beta+d_g}}\log(n)^{3/2}.
    \]
\end{cor}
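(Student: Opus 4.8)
The plan is to read the statement as a direct specialization of \Cref{theorem_wgan_conditional_excessbayes} to the forecasting setup, together with the observation that the postulated model $\IP^{T(A_r),\IA_{r-1}} = \IP^{g^{*c}(Z,\IA_{r-1}),\IA_{r-1}}$ forces the Bayes-optimal value of the modified conditional Wasserstein objective to vanish, so that $R_n^c$ and $W_{1,n}^c$ coincide along the estimator.

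First I would fix the identification with the notation of \Cref{theorem_wgan_conditional_excessbayes}: the role of $X$ is played by the one-dimensional statistic $T(A_r)$ (so $d=1$), the role of $Y$ by $\IA_{r-1}\in\IR^{pr}$ (so $d_Y=pr$), and the conditional generator class is $\ssG^c(d_Z,pr,D,d_g,\beta,K)$, which contains $g^{*c}$ by assumption. Since $(A_i)_i$ is strictly stationary and $T$ as well as the lag map are fixed and measurable, the process $\big(T(A_i),\IA_{i-1}\big)_i$ is strictly stationary; and, as in the remainder of the paper, one may assume the relevant quantities live in a compact cube and rescale accordingly, so the hypothesis that the observations take values in a unit cube is met.

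Next I would verify the $\beta$-mixing hypothesis. Here I would invoke the standard monotonicity of $\beta(\mathcal{U},\mathcal{V})$ under passing to sub-$\sigma$-algebras: for $i\le 0$ the pair $\big(T(A_i),\IA_{i-1}\big)$ is $\sigma(A_j:j\le 0)$-measurable and for $i\ge k$ it is $\sigma(A_j:j\ge k-r)$-measurable, whence $\beta_{(T(A),\IA)}(k)\le \beta_A\big((k-r)\vee 0\big)$. If $\beta_A(m)\le \kappa m^{-\alpha}$, then for $k>2r$ one has $\beta_{(T(A),\IA)}(k)\le \kappa(k-r)^{-\alpha}\le 2^{\alpha}\kappa\,k^{-\alpha}$, while the finitely many $k\le 2r$ are bounded trivially by $1\le (2r)^{\alpha}k^{-\alpha}$; taking $\tilde\kappa:=\max\{2^{\alpha}\kappa,(2r)^{\alpha}\}$ gives $\beta_{(T(A),\IA)}(k)\le\tilde\kappa\,k^{-\alpha}$ for all $k$. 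Thus the polynomial decay required by \Cref{theorem_wgan_conditional_excessbayes} holds, the extra dependence on $r$ being absorbed into the bounding constants (which are anyway allowed to depend on $\kappa,\alpha$ and the dimensions).

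Finally, I would apply \Cref{theorem_wgan_conditional_excessbayes} to obtain $\IE R_n^c(\hat g_n^c)\lesssim (s_fL_f\log(s_fL_f)/n)^{1/2}+\phi_n^{1/2}\log(n)^{3/2}$ with $\phi_n=n^{-2\beta/(2\beta+d_g)}$, i.e. $\phi_n^{1/2}=n^{-\beta/(2\beta+d_g)}$, and then identify $R_n^c$ with $W_{1,n}^c$. Since $f\equiv 0$ lies in the critic class with $\|f\|_L=0\le 1$, we have $W_{1,n}^c(g)\ge 0$ for every $g$; and since $\IP^{T(A_r),\IA_{r-1}}=\IP^{g^{*c}(Z,\IA_{r-1}),\IA_{r-1}}$, every integrand $\IE f(T(A_r),\IA_{r-1})-\IE f(g^{*c}(Z,\IA_{r-1}),\IA_{r-1})$ vanishes, so $W_{1,n}^c(g^{*c})=0$ and hence $\inf_{g\in\ssG^c}W_{1,n}^c(g)=0$. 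Therefore $R_n^c(g)=W_{1,n}^c(g)$ for all $g$, in particular $\IE W_{1,n}^c(\hat g_n^c)=\IE R_n^c(\hat g_n^c)$, which is the asserted bound. There is no substantial obstacle here: the only points requiring care are the bookkeeping of the mixing coefficients under the lag map and the (elementary) observation that the model assumption makes the Bayes-optimal value zero; everything else is an immediate invocation of \Cref{theorem_wgan_conditional_excessbayes}.
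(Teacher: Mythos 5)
Your proof is correct and takes the same route the paper (implicitly) takes: the corollary is a direct invocation of Theorem~\ref{theorem_wgan_conditional_excessbayes} applied to the process $\big(T(A_i),\IA_{i-1}\big)_i$, after noting that (a) this process inherits $\beta$-mixing from $(A_i)_i$ with the stated lag shift, and (b) the well-specification assumption $\IP^{T(A_r),\IA_{r-1}}=\IP^{g^{*c}(Z,\IA_{r-1}),\IA_{r-1}}$ with $g^{*c}\in\ssG^c$ forces $\inf_{g\in\ssG^c}W_{1,n}^c(g)=0$, hence $R_n^c(\hat g_n^c)=W_{1,n}^c(\hat g_n^c)$. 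You spell out the two points the paper leaves implicit (the $\sigma$-algebra monotonicity argument for the mixing coefficients, and the observation that the constant-zero critic gives $W_{1,n}^c\ge 0$ while the truth achieves $0$), and both are argued correctly; the absorption of the lag-induced constant $2^\alpha\kappa\vee(2r)^\alpha$ into the bounding constants is also legitimate since those constants are allowed to depend on $\kappa$, $\alpha$, and the dimensional parameters.
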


We obtain confidence intervals for $T(A_i)$ given $\IA_{i-1} = \ia$ as follows based on the results for conditional WGANs from Section \ref{sec_asympt_conf_cwgan}. For $N\in\IN$, let $Z_1^{*},...,Z_N^{*}$ denote i.i.d. realizations of $\IP^{Z}$. Define the empirical distribution function given $\IA_{i-1} = \ia$,
\[
    \hat F_{N,n}(t|\ia) := \frac{1}{N}\sum_{j=1}^{N}\Ii_{\{\hat g_n^c(Z_j^{*},\ia) \le t\}}.
\]
Then for $\alpha \in (0,1)$, the interval
\[
    I_{N,n}(\ia) := \Big\{t \in \IR: \hat F_{N,n}(t|\ia) \in \left(\frac{\alpha}{2},1-\frac{\alpha}{2}\right]\Big\}
\]
is an asymptotic $(1-\alpha)$ confidence interval for $T(A_i)$ given $\IA_{i-1} = \ia$.

\section{Simulation studies}\label{sec:sims}
In this section we study the behaviour of an approximation of the optimal estimators $\hat g_n$ from \reff{wgan_unconditional} and $\hat g_n^{c}$ from \reff{wgan_conditional} obtained by gradient descent methods. In all our experiments we use the WGAN-GP \cite{wgangp} algorithm with the adapted default values given in \Cref{table:WGANGD}, if not stated otherwise.

\begin{table}[h!]
\centering
\begin{tabular}{ |c l| }
\hline
\multicolumn{2}{|c|}{\textbf{WGAN-GP.}} \\
\hline
\multicolumn{2}{|c|}{\textbf{Require:} $\beta_1$, $\beta_2$, learning rate $\alpha$, penalty weight $\lambda$, batch size $m$,}\\
\multicolumn{2}{|c|}{ number of critic iterations per generator iteration $n_{\text{critic}}$.}\\
\hline
0: & Initialize critic parameters $\theta_{\text{critic}}$ and generator parameters $\theta_{\text{gen}}$.\\
 1: & \textbf{while} $\theta_{\text{gen}}$ has not converged: \\ 
 2: & \quad \textbf{for} $t = 0,\dots, n_{\text{critic}}$:\\ 
 3: & \qquad Sample a batch $\{ X^{(i)}\}_{i=1}^m\sim \P^X$ from the real data.\\
 4: & \qquad Sample i.i.d. batches $\{ Z^{(i)}\}_{i=1}^m\sim\P^Z$, \; $\{ U^{(i)}\}_{i=1}^m\sim U[0,1]$.\\
 5: & \qquad Compute $\tilde X^{(i)} = U^{(i)} X^{(i)} + (1-U^{(i)}) g_{\theta_{\text{gen}}}(Z^{(i)})$.\\
 6: & \qquad $G_{\text{c}} \leftarrow \nabla_{\theta_{\text{critic}}} \big( \frac{1}{m}\sum_{i=1}^m f_{\theta_{\text{critic}}}(X^{(i)})-\frac{1}{m}\sum_{i=1}^m f_{\theta_{\text{critic}}}(g_{\theta_{\text{gen}}}(Z^{(i)})) \big).$ \\
 7: & \qquad $Pen_{\text{c}} \leftarrow \lambda \cdot \frac{1}{m}\sum_{i=1}^m \big( ||\nabla_{\tilde X_i} f_{\theta_{\text{critic}}}||_2 - 1 \big)^2$.\\
 8: & \qquad $\theta_{\text{critic}} \leftarrow \theta_{\text{critic}} + \alpha \cdot \text{ADAM}(G_c+Pen_{\text{c}},\,\theta_{\text{critic}},\,\beta_1,\,\beta_2).$\\
 9: & \quad \textbf{end for}\\
 10: & \quad Sample an i.i.d. batch $\{ Z^{(i)}\}_{i=1}^m\sim\P^Z$.\\
 11: & \quad $G_{\text{gen}} \leftarrow -\nabla_{\theta_{\text{gen}}} \frac{1}{m}\sum_{i=1}^m f_{\theta_{\text{critic}}}(g_{\theta_{\text{gen}}}(Z^{(i)})) \big).$ \\
 12: & \quad $\theta_{\text{gen}} \leftarrow \theta_{\text{gen}} - \alpha \cdot \text{ADAM}(G_{\text{gen}},\,\theta_{\text{gen}},\,\beta_1,\,\beta_2).$\\
 13: & \textbf{end while}\\
 \hline
\end{tabular}
\caption{The gradient descent algorithm proposed in \cite{wgangp}, with adapted default values $\alpha=0.0001$, $\lambda=0.1$, $m=64$, $n_{\text{critic}} = 5$, $\beta_1=0.5$, $\beta_2=0.9$ and $\P^Z=U[0,1]$. The critic tries to maximize the empirical Wasserstein distance, while the generator has the contrary objective. The penalty term in line 7 softly enforces the Lipschitz constraint on the critic. For the first 25 and every $100^{th}$ generator iterations, we train the critic for 100 iterations for each generator iteration to ensure critic convergence and meaningful gradients. We additionally apply $0.01$ $L_2$-weight decay to both networks to softly enforce boundedness of the network parameters.}
\label{table:WGANGD}
\end{table}

We now give some comments on the WGAN-GP algorithm. In opposite to the original WGAN algorithm from \cite{wgan} (cf. \cite[Theorem 1]{anil2018sorting}), which uses crude weight clipping to guarantee a bounded Lipschitz constant of the critic networks, WGAN-GP realizes the Lipschitz constraint in the definition of $\hat W_{1,n}$ via a penalty term. Furthermore, the critic is learned from $\hat W_{1,n}$ using a gradient descent method. Although the critic network may not obey $\|f\|_L\le 1$ by using a penalty term, the equation
\[
L \cdot \sup_{\|f\|_L\le 1} \IE f(X)-\IE f(g(Z)) = \sup_{\|f\|_L\le L} \IE f(X)-\IE f(g(Z)),
\]
shows that it is enough to bound the Lipschitz constant of the critic by some (unknown) constant.  WGAN-GP therefore recovers the distributional stability of the $W_1$-distance, induced by metrizing weak convergence (cf. also \cite{arjovsky2017principled}). However, it should be noted that the new latent variables $Z_{ij}$ generated in each training epoch may slightly change the bound on the Lipschitz constant introduced by the penalty term.

For Vanilla GANs generator and discriminator training have to be carefully balanced, because a discriminator that classifies too well does not yield informative gradients (the so-called saturation phenomenon). For WGANs, in contrast, better critics yield better gradients. Hence one only has to train the critic \enquote{long enough}, which is a huge practical advantage. We can confirm stable training behaviour in all our experiments.


For the conditional setting, we simply replace all $f_{\theta}(X^{(i)})$ by $f_{\theta}(X^{(i)}, Y^{(i)})$ and all $g_\theta (Z^{(i)})$ by $g_\theta (Z^{(i)},Y^{(i)})$ in \Cref{table:WGANGD}.

In our simulation studies, we are particularly interested in how well $\hat g_n(Z)$ or $\hat g_n(Z,y)$ approximate the underlying true distribution of $X$ or $X$ given $Y = y$, respectively. We measure the approximation quality in our simulation studies with 
\begin{itemize}
    \item the empirical optimal transport distance, from now on OT, computed with the Python package POT \cite{pot}, between an equal amount of real samples $X_i$ and generated samples $\hat g_n(Z_j)$ as an estimator for $W_1(\IP^X,\IP^{\hat g_n(Z)})$ ($X_i$ given $Y_i = y$ and $\hat g_n(Z_j,y)$ in the conditional case),
    \item empirical $95\%$-confidence intervals (we use the abbreviation CI95) as discussed in \Cref{sec_uncon_conf_intervals}, \Cref{sec_asympt_conf_cwgan} and \Cref{sec_timeseries}, where we compute the empirical $2.5\%$- and $97.5\%$-quantiles of a statistic evaluated on $N$ generated samples.
\end{itemize}

\subsection{Synthetic data}

\textbf{Models: } To analyze the performance of the WGAN estimator in the unconditional case, we use the following model: We generate data
\begin{equation}
    X_i = g^{*}(Z_i), \quad i=1,...,n \label{uncon_sim_g}
\end{equation}
via the transformation
\begin{align*}
    g^*(z)=\big( \sin(z_1), \sin(z_2), &\sin(z_3), \exp(z_1),z_2^2+2z_3^3, \cos(2\pi z_1\cdot z_2\cdot z_3),\\
&z_1\cdot z_2\cdot z_3,(z_1+z_2+z_3)^2, z_1+z_2+z_3, 2x_1^4-x_2^3 \big),
\end{align*}
where $Z_i$ are i.i.d. uniformly distributed on $[0,1]^3$. That is, $d = 10$ and $d_Z = 3$. Here, $g$ is designed to contain a variety of smooth functions but also similarities between some of the coordinates.


For the conditional WGAN estimator, we consider the following model: With $d=10,d_Z=7$ and $d_Y=3$, we simulate
\begin{equation}
    X_i = g^{*}_c(Z_i,Y_i),\quad i=1,...,n, \label{con_sim_g}
\end{equation}
where $g^*_c= g^*\circ h$ with
\[
h(z_1,\dots,z_7,y_1,y_2,y_3):=(z_1+z_2^2+z_3^3, z_4\cdot z_5 + z_6\cdot z_7, \sin(y_1)-y_2 \cdot y_{3}),
\]
and $(Z_i,Y_i)$, $i=1,...,n$ are i.i.d. uniformly distributed on $[0,1]^{10}$. Here, $g^{*}_c$ has a encoder-decoder structure according to Definition \ref{definition_c_autoencoder}.

For simplicity, we consider independent observations $Z_i,Y_i$ in both situations (that is, no serial correlation along $i=1,...,n$).

\textbf{Results:}  We examine the convergence behaviour of WGANs for increasing sample size $n\to\infty$ in \Cref{table:ntoinfty}. In the unconditional case, we construct confidence intervals for the statistic $T(X_1)$, where $T(x)=\sum_{j=1}^{10} x_j$, using $N=1000$ generated samples, by computing the empirical $2.5\%$- and $97.5\%$-quantiles of $\{T(\hat g (Z_j))\}_{j=1,\dots,N}$ (as in \reff{eq_uncon_conf_int}). In the conditional case, we approximate the statistic $T(X|Y=y)$ for $y=(0.5,0.5,0.5)$, using $N=1000$ generated samples, by computing the empirical $2.5\%$- and $97.5\%$-quantiles of $\{T(\hat g (Z_j,y))\}_{j=1,\dots,N}$ (as in \reff{conf_cwgan}). Note that the coverage of the constructed confidence intervals approaches $95\%$, while the optimal transport distance decreases. According to our results, the network sizes should grow with $n$, but we use a fixed architecture that performs well for all given $n$ to ensure comparability. Good coverage probabilities are already achieved for $n=960$ or $n=3200$, respectively. This highlights the fact that the WGAN is capable of detecting the sparse structure in the models \reff{uncon_sim_g} and \reff{con_sim_g} and realizes a faster convergence rate as announced in \Cref{theorem_wgan_excessbayes} and \Cref{theorem_wgan_conditional_excessbayes}.
\begin{table}[h!]
\centering
\begin{tabular}{ |c||c|c|c|c|c| }
\hline
 \multirow{2}{*}{\minitab[c]{Measured\\ quantity}} & \multicolumn{5}{|c|}{Number of samples} \\
 \cline{2-6}
  & 64 & 320 & 960 & 3200 & 9600\\
 \hline
 CI95, unc. &  47.92 (5.72)  & 52.26 (6.24) & 96.16 (1.18) & 94.50 (0.86) & \textbf{94.56} (0.84)\\
 OT, unc. & 1.634 (0.077)    & 1.630 (0.102) & 0.970 (0.130) & 0.412 (0.029) & \textbf{0.342} (0.026)\\
 \hline
 CI95, cond. & 24.96 (3.13) & 23.2 (1.67) & 45.32 (7.27) & 94.76 (1.93) & \textbf{94.78} (0.97)\\
 OT, cond. & 7.181 (0.187) & 6.720 (0.392) & 7.670 (0.307) & 1.967 (0.562) & \textbf{1.297} (0.341)\\
 \hline
\end{tabular}
\caption{Shows the coverage probability (in $\%$) of empirical $95\%$-confidence intervals for the sum of all components $T(x)=\sum_{j=1}^{10}x_j$ and the empirical optimal transport distance, each computed over $N=1000$ new i.i.d. samples, after 700 epochs of training. We train each model 5 times with different i.i.d. data.The left number denotes the mean over all runs, while the number in parentheses denotes the empirical standard deviation. We use a discriminator with 5 hidden layers of size 128 and a generator with 3 hidden layers of size 32.}
\label{table:ntoinfty}
\end{table}

\subsection{Real data application}
Practical approaches for time series generation using conditional GANs have been conducted by \cite{tcgan} using conditional vanilla GANs and \cite{tcwgan} using conditional WGANs. These works emphasize the potential of using generated data for data augmentation in other tasks, given small data sets (few shot learning).

For a real world simulation study we consider the mean temperatures $A_i \in \IR^d$, $i=1,...,n = 4779$ of $d=32$ German cities provided by the Deutscher Wetterdienst (German Metereological Service)\footnote{\url{https://opendata.dwd.de/climate_environment/CDC/observations_germany/climate/daily/kl/historical}}. Note that the chosen cities are spread throughout Germany, which can be seen in \Cref{fig:map}.
\begin{figure}
    \centering
    \includegraphics[width = 0.4\textwidth]{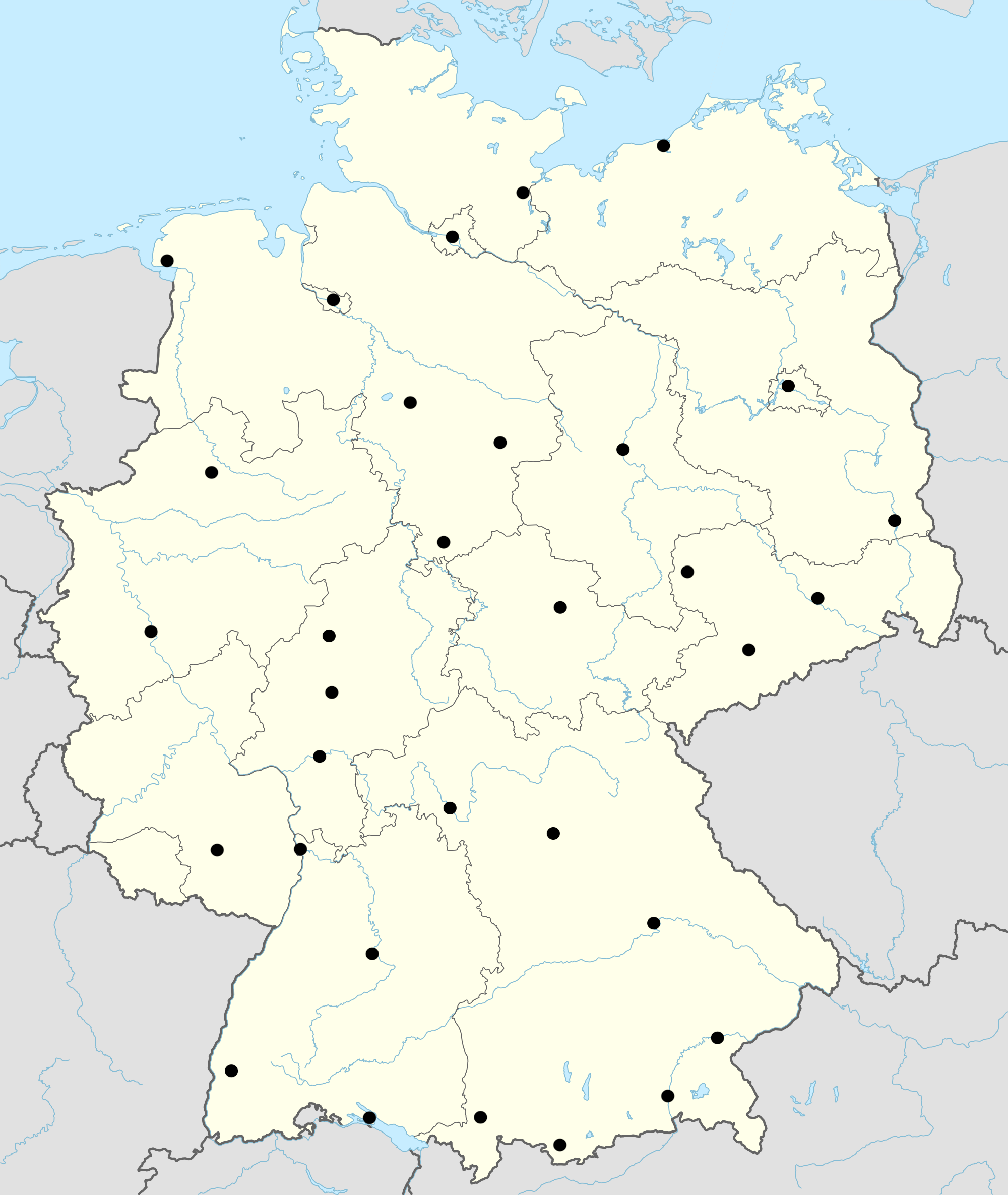}
    \caption{The authors of \cite{nathawut} collected weather data from the cities of Berlin, Braunschweig, Bremen, Chemnitz, Cottbus, Dresden, Erfurt, Frankfurt, Freiburg, Garmisch-Patenkirchen, Göttingen, Münster, Hamburg, Hannover, Kaiserslautern, Kempten, Köln, Konstanz, Leipzig, Lübeck, Magdeburg, Cölbe, Mühldorf, München, Nürnberg, Regensburg, Rosenheim, Rostock, Stuttgart, Würzburg, Emden and Mannheim.}
    \label{fig:map}
\end{figure}

In total we observe 4779 temperature values for each city over the period from 2006/07/01 to 2019/07/31. In the notation of \Cref{sec_timeseries}, given the temperatures of several cities of the previous day $A_{i-1}$ (so using only one lag $r=1$), we predict the temperature in Berlin $T(A_i)=A_{i1}$ of each day, i.e. $r=1$ and $T(x)=x_1$. The first day is not predicted. We use the first $n_{train}=4300$ days for training and the remaining $n_{test}=478$ days from 2018/04/10 to 2019/07/31 for testing.\\
We train cWGANs with 4-dimensional standard normal noise and use $A_{i-1}$ as conditional information. We train 3 different models.
\begin{enumerate}
    \item[(M1)] The first model only predicts the temperature in Berlin and $A_{i-1}$ only consists of the temperatures in Berlin, Braunschweig and Bremen of the previous day.
    \item[(M2)] The second model only predicts the temperature in Berlin but $A_{i-1}$ consists of the temperatures in all 32 cities of the previous day.
    \item[(M3)] The third model predicts the temperatures in all 32 cities and $A_{i-1}$ consists of the temperatures in all 32 cities of the previous day. The quality of the confidence intervals is only assessed for 1 city, namely Berlin.
\end{enumerate}

For all models we use the generators with 3 hidden layers with 10 neurons each and a discriminator with 5 hidden layers with 32 neurons each. \Cref{tab:temperatures} shows the progression over 1000 epochs of training. \Cref{table:temps} complements these illustrations with OT and CI95 values after 1000 epochs of training. The confidence intervals $I_{N,n}(A_{i-1})$ (cf. \reff{conf_cwgan}) for $T(A_i)$ are constructed from $N=1000$ realizations of $\IP^{Z}$ when $i$ belongs to the training set and from $N=10000$ realizations of $\IP^{Z}$ when $i$ belongs to the test set.



\afterpage{%
\thispagestyle{empty}
\begin{table}[ht]
\centering
\begin{tabular}{*{3}{m{0.33\textwidth}}}
\hline
\begin{center}(M1)\end{center} & \begin{center}(M2)\end{center} & \begin{center}(M3)\end{center}\\
\hline
\begin{center}\includegraphics[width = 0.33\textwidth]{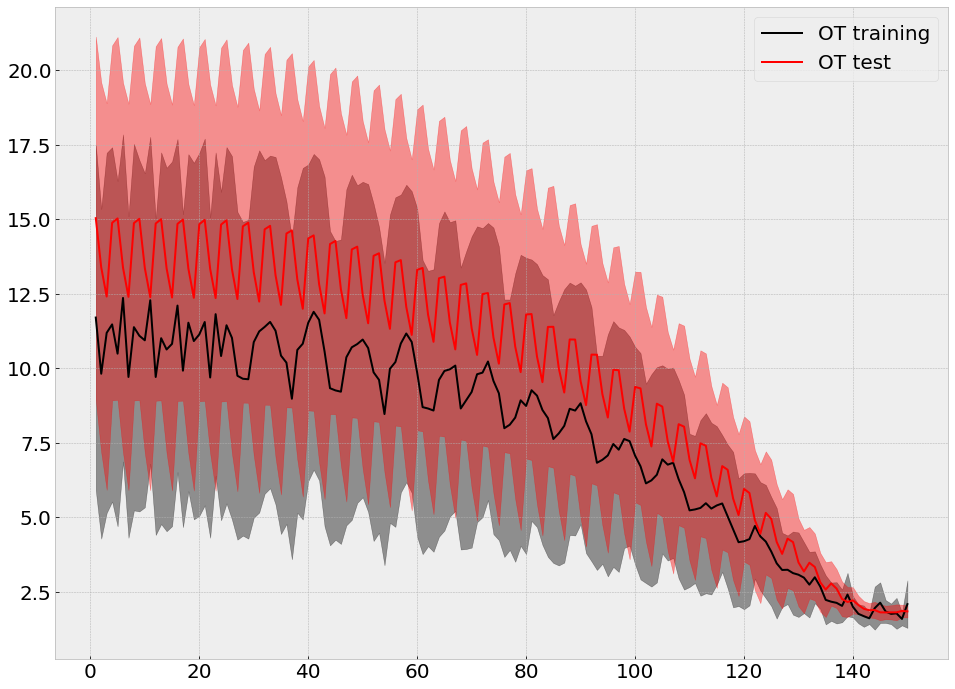}\end{center}
& \begin{center}\includegraphics[width = 0.33\textwidth]{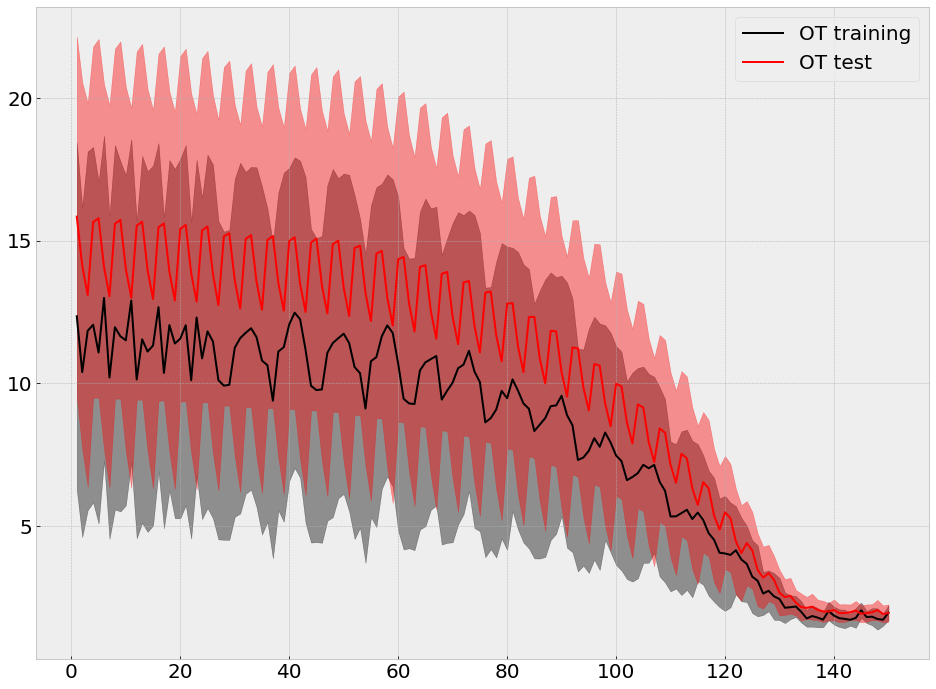}\end{center}
& \begin{center}\includegraphics[width = 0.33\textwidth]{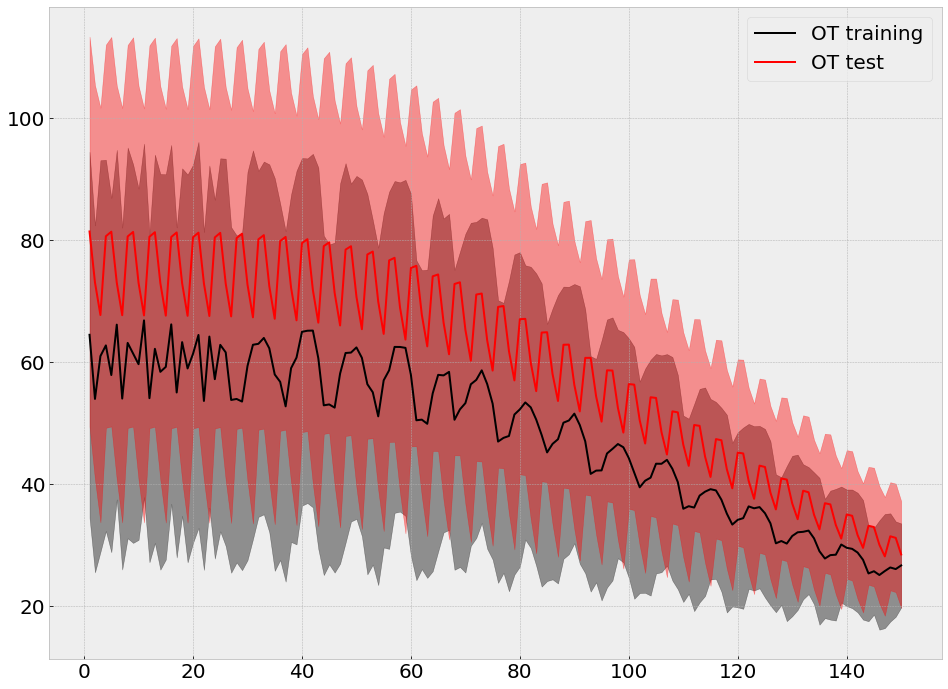}\end{center}\\
\hline
\begin{center}\includegraphics[width = 0.33\textwidth]{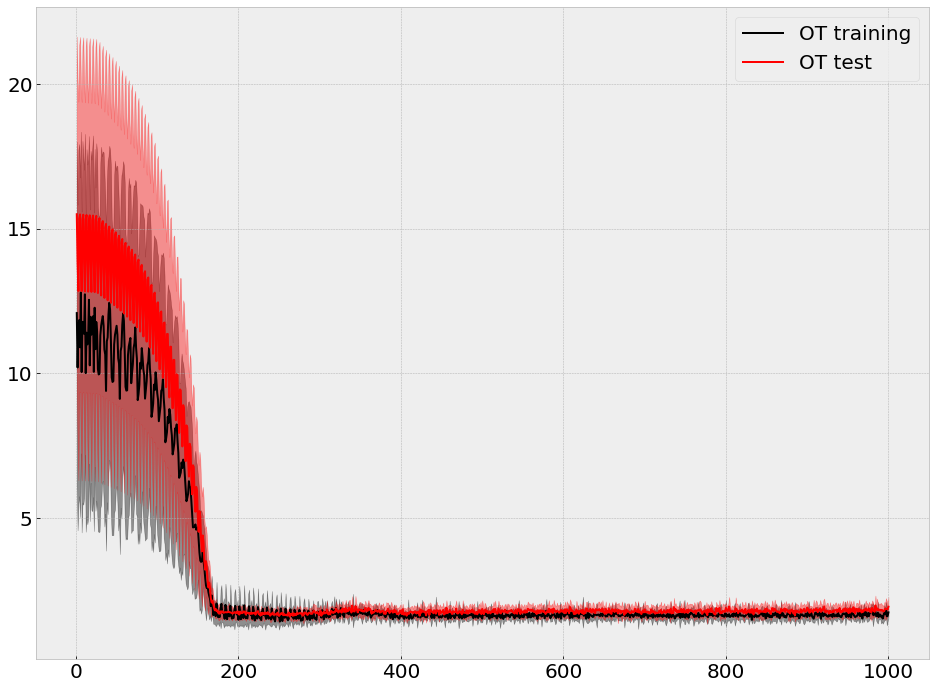}\end{center}
& \begin{center}\includegraphics[width = 0.33\textwidth]{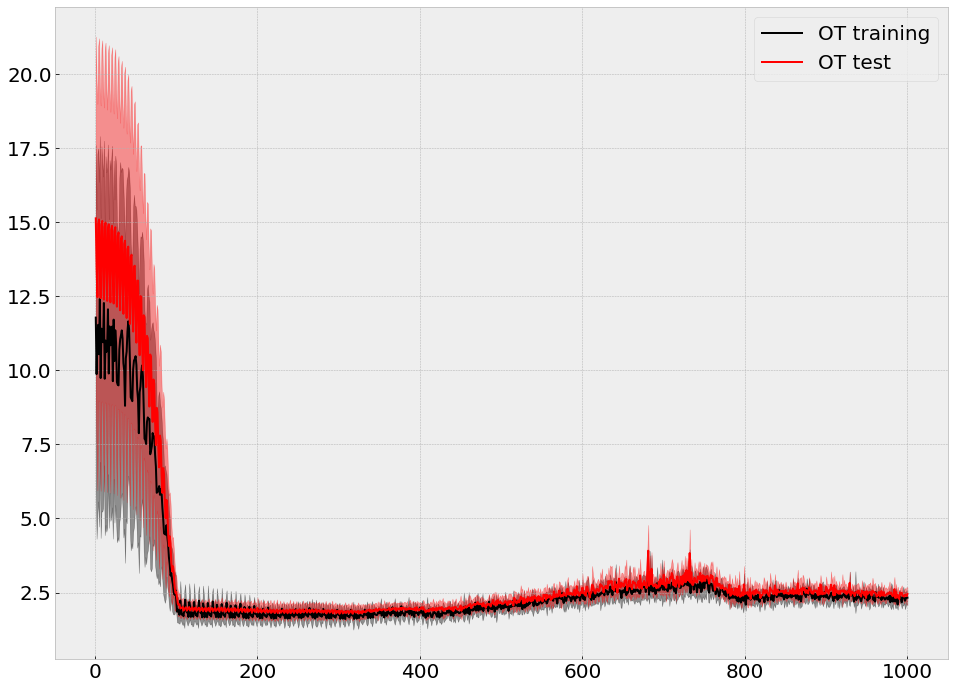}\end{center}
& \begin{center}\includegraphics[width = 0.33\textwidth]{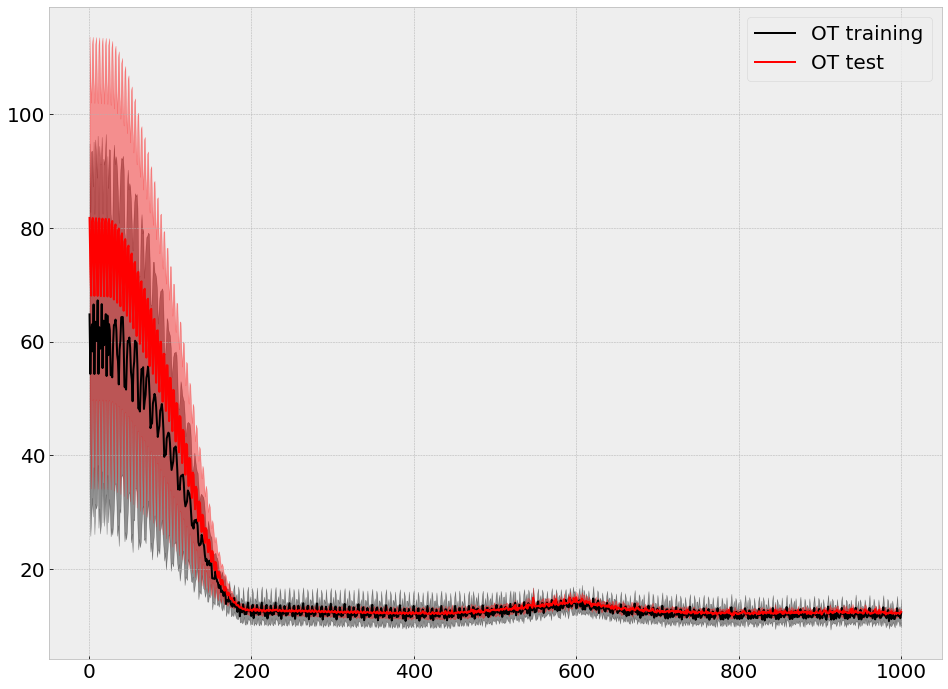}\end{center}\\
\hline
\begin{center}\includegraphics[width = 0.33\textwidth]{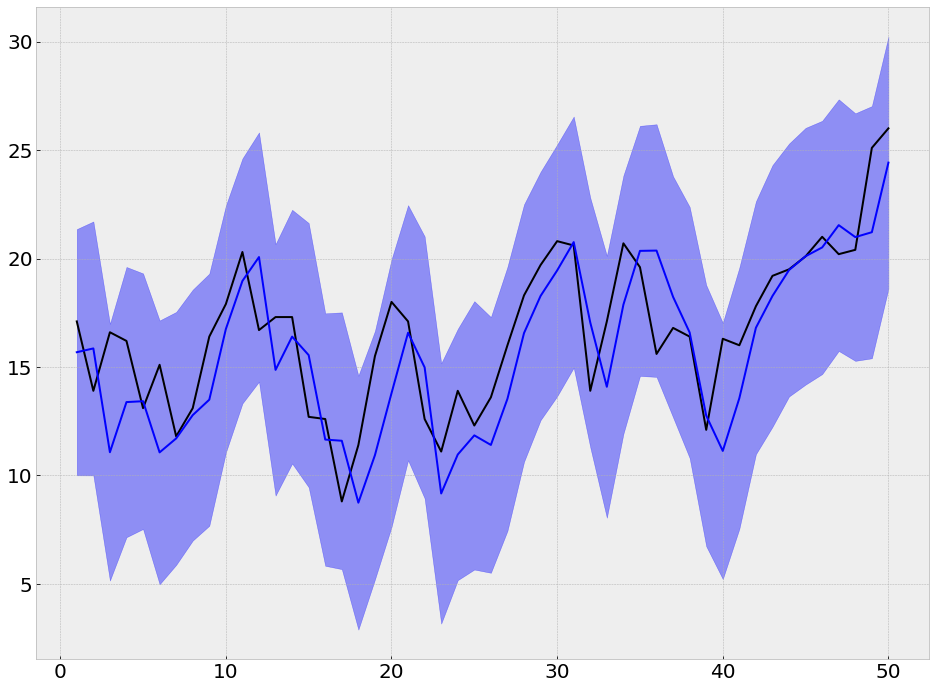}\end{center}
& \begin{center}\includegraphics[width = 0.33\textwidth]{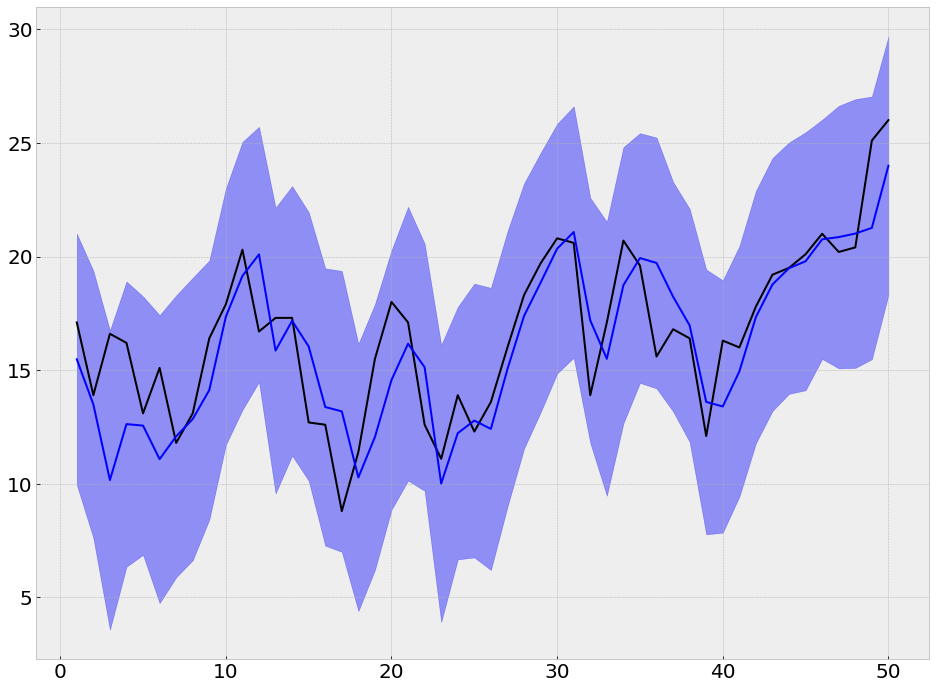}\end{center}
& \begin{center}\includegraphics[width = 0.33\textwidth]{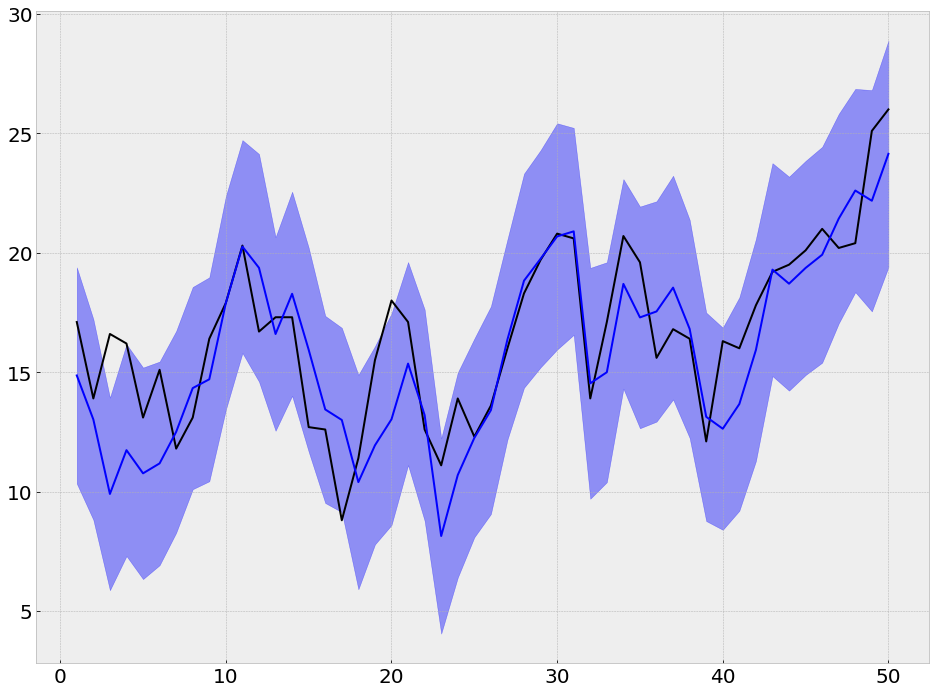}\end{center}\\
\hline
\begin{center}\includegraphics[width = 0.33\textwidth]{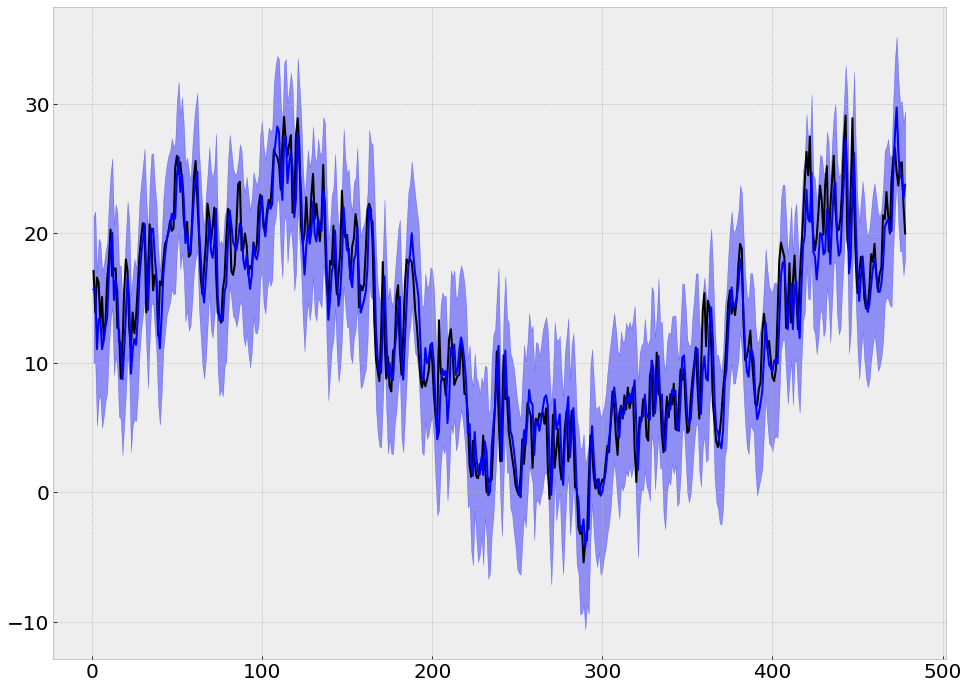}\end{center}
& \begin{center}\includegraphics[width = 0.33\textwidth]{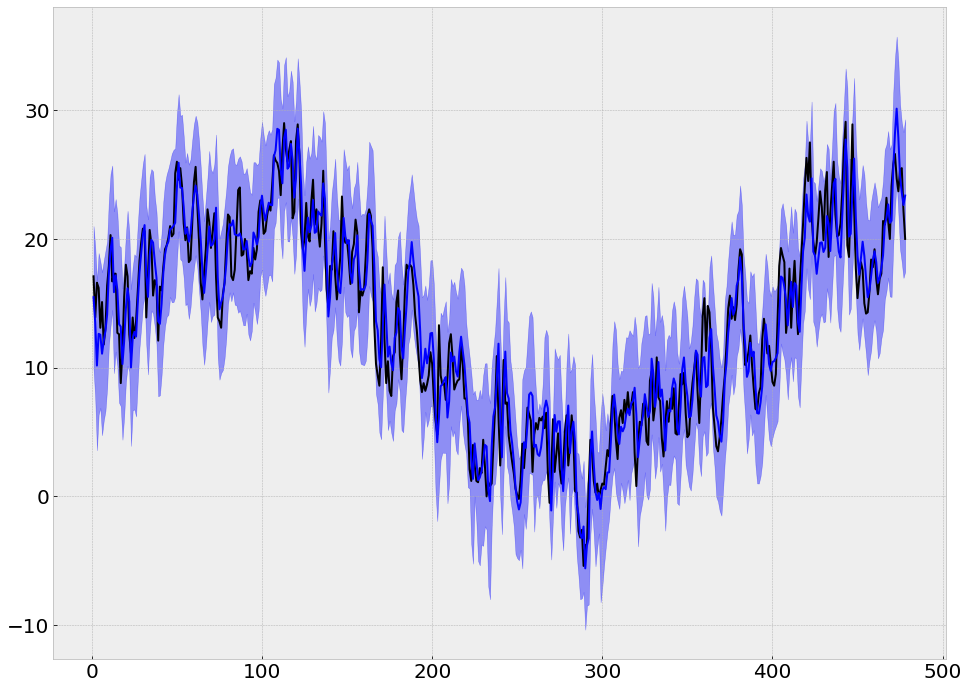}\end{center}
& \begin{center}\includegraphics[width = 0.33\textwidth]{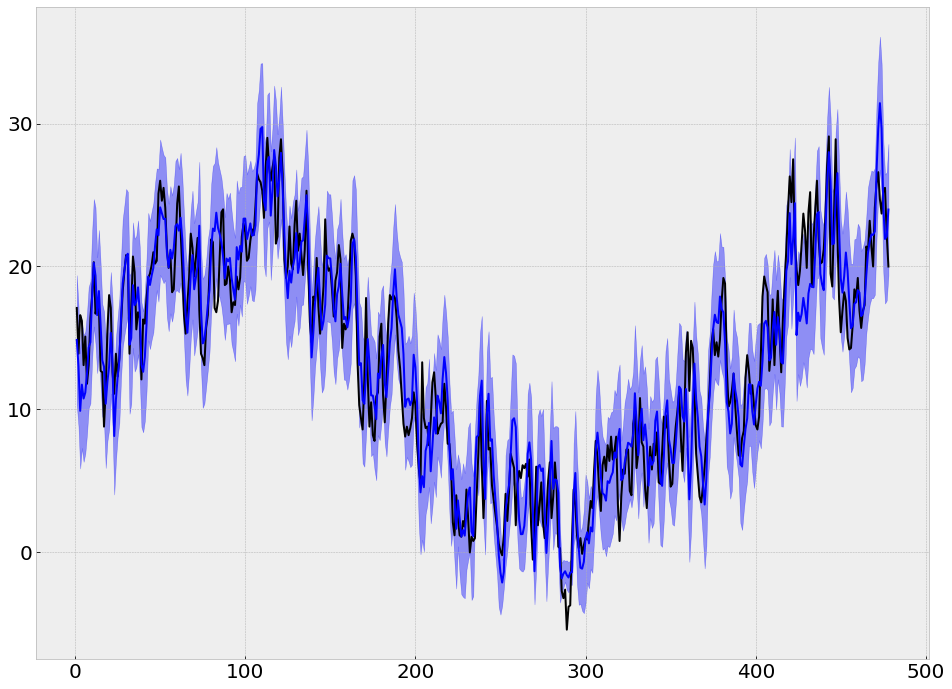}\end{center}\\
\hline
\end{tabular}
\caption{Training process of a cWGAN depicted after 1000 epochs over temperature data for the models (M1)-(M3).\\
The first two rows show the estimated optimal transport distance between real and generated data evaluated 10 times on 1 batch from the training set (black) and test set (red), over 150 and 1000 epochs respectively. Depicted is the mean and a $\pm\sigma$-confidence band. The other two rows show the actual mean temperatures in Berlin (black) and the average generated prediction (blue) with a $\pm 3\sigma$-confidence band over 1000 generated points per time step, for the first 50 days of the test set and the whole test set respectively.}
\label{tab:temperatures}
\end{table}
\clearpage
}

The optimal transport distance is computed jointly over the $A_{i-1}$ and the predicted/real temperatures in Berlin and still has high variance as the dimension is $4$ for (M1) and $33$ for (M2) and (M3). Since in the first model $A_{i-1}$ is only 3-dimensional, the optimal transport distance is only comparable between the second and third model.

\begin{table}[h]
\centering
\begin{tabular}{|l||c c c|} 
 \hline
  & Three to Berlin & All to Berlin & All to All \\ [0.5ex] 
 \hline
 Test OT, 1000 epochs & 1.39 & 2.38 & 2.37 \\
 \hline
 Empirical $95\%$-confidence, train & 92.60\% & 91.14\% & 74.02\% \\
 Empirical $95\%$-confidence, test & 89.96\% & 89.54\% & 70.71\% \\ [1ex] 
 \hline
\end{tabular}
\caption{The first row computes the empirical optimal transport distance (OT) on the whole test set. Rows 2 and 3 show the respective proportion of $T(A_i)$ lying in the intervals $I_{N,n}(A_{i-1})$ between the empirical $2.5\%$- and $97.5\%$-quantiles, for the training and test set, respectively.}
\label{table:temps}
\end{table}

Note that in all scenarios, the generator firstly learns the predictions with overconfidence. For (M1), the training data lies much more densely in the $A_{i-1}$-space. (M1) learns the distribution much faster, but eventually the (M2)-model achieves comparable performance. The most complex (M3)-model takes the longest to converge but already performs decently considering that it performs a $32$-dimensional prediction with the same small generator architecture. To have comparable results, we used 1000 training epochs for the estimators in \Cref{table:temps}. However, the quality of the estimators may still increase for more training epochs. For instance, after 2000 epochs of training, we achieve 2.35 OT and $86.19\%$ coverage on the test set for the temperature in the city Berlin in model (M3).  Overall, the results are quite satisfying and motivate that the cWGAN estimator is able to find some sparse underlying structures in the data.

\section{Conclusion}
\label{sec_conclusion}

To our knowledge, this paper is the first where  convergence rates for the excess Bayes risk of Wasserstein GANs and conditional Wasserstein GANs are derived under structural assumptions on the space of generators.

We have formalized the empirical WGAN objective with growing critic networks and have shown that this objective still metrizes weak convergence. Our results yield recommendations on the size of generator networks and unveil the potential use of conditional WGANs in high-dimensional time series forecasting, in particular the construction of confidence intervals. All our results hold for dependent data, where the dependence is measured with absolutely regular $\beta$-mixing. Both our synthetic and real world simulations demonstrate good empirical coverage for confidence intervals in multidimensional applications.

Additionally, we have included a first approach to formalize the availability of a growing number of observations when training is performed with multiple epochs. The corresponding result justifies the use of very large generator networks without suffering from slow convergence rates. Our attempt could be explored in other contexts and extended to the conditional case. In future work, one could also try to study the convergence behaviour of local instead of global minimizers of the empirical WGAN objectives. Furthermore, it would be interesting to include the gradient penalty in the theoretical results and use other GAN losses or network architectures, such as the Groupsort activation function \cite{anil2018sorting} for the critic. It would be interesting to refine the approximation results from \cite{sh} to gain more insight into the theoretical properties of our modified Wasserstein distance $W_{1,n}$.

\bibliographystyle{plain}
\bibliography{refs}

\appendix


\section{Proofs of Section \ref{sec_results}}

\begin{proof}[Proof of Lemma \ref{lemma_weak_convergence}] 
    By Lemma \ref{lemma_w1beta_approx},
    \[
        \IE W_1^{\gamma}(\hat g_n) \le \IE W_{1,n}(\hat g_n) + C \phi_n \to 0.
    \]
    Let $f\in C^{\gamma}([0,1]^d,1)$ be arbitrary. By independence of $Z$ and $\hat g_n(\cdot)$, it follows that $\IE[\IE[f(g(Z))]\big|_{g=\hat g_n}] = \IE[\IE[f(\hat g_n(Z))|\hat g_n]] = \IE f(\hat g_n(Z))$. Thus
    \begin{equation}
        |\IE f(X) - \IE f(\hat g_n(Z))| \le \IE W_1^{\gamma}(\hat g_n) \to 0.\label{lemma_weak_convergence_eq1}
    \end{equation}
    We now show weak convergence $\hat g_n(Z) \overset{d}{\to} X$. Let $A \subset [0,1]^d$ be a closed set (and thus compact). Let $\varepsilon > 0$. Define $\rho_{\varepsilon,A}(x) := 1 - \varphi(\frac{d(x,A)}{\varepsilon})$, where $d(x,A) := \inf_{y\in A}\|x-y\|_{\infty}$ and $\varphi:\IR \to [0,1]$ is an arbitrary infinitely differentiable function with $\varphi(x) = 0$ for $x \le 0$ and $\varphi(x) = 1$ for $x \ge 1$, for instance one may define $\varphi(x) = e^{-1/x}\cdot (e^{-1/x} + e^{-1/(1-x)})^{-1}$ for $x\in [0,1]$.
    
    Note that $\Ii_A(x) \le \rho_{\varepsilon,A}(x)$. Furthermore, for $d(\varepsilon) > 0$ small enough, $d(\varepsilon)\cdot \rho_{\varepsilon,A} \in C^{\gamma}([0,1]^d,1)$. By these arguments and \reff{lemma_weak_convergence_eq1},
    \begin{eqnarray*}
        \IP(\hat g_n(Z) \in A) &\le& \IE \rho_{\varepsilon,A}(\hat g_n(Z)) = d(\varepsilon)^{-1}\IE[d(\varepsilon)\rho_{\varepsilon,A}(\hat g_n(Z))]\\
        &\to& d(\varepsilon)^{-1}\IE[d(\varepsilon)\rho_{\varepsilon,A}(X)] = \IE \rho_{\varepsilon,A}(X).
    \end{eqnarray*}
    We conclude that $\limsup_{n\to\infty}\IP(\hat g_n(Z) \in A) \le \IE \rho_{\varepsilon,A}(X)$. Since $\rho_{\varepsilon,A}(x) \to \Ii_A(x)$ for $\varepsilon \to 0$, the dominated convergence theorem implies $\limsup_{n\to\infty}\IP(\hat g_n(Z) \in A) \le \IP(X \in A)$. The result now follows from the portmanteau lemma for weak convergence.
\end{proof}

\begin{proof}[Proof of Lemma \ref{lemma_weak_convergence_distribution}]
    Fix $x\in\IR$. By the law of large numbers (applied conditionally on $\hat g_n$), we have that almost surely, for $N\to\infty$,
    \begin{equation}
        \hat F_{N,n}(x) \to \IP(\hat g_n(Z) \le x|\hat g_n).\label{lemma_weak_convergence_distribution_eq1}
    \end{equation}
    We now conduct a similar argumentation as in the proof of Lemma \ref{lemma_weak_convergence} based on the stochastic convergence $W_{1}^{\gamma}(\hat g_n) \overset{p}{\to} 0$. Let $A \subset [0,1]$ be a closed subset. Then
    \begin{eqnarray*}
        &&\IP\big(\IP(\hat g_n(Z) \in A|\hat g_n) - \IP(X \in A) \ge \rho\big)\\
        &\le& \IP\big(\IE[d(\varepsilon)\rho_{\varepsilon,A}(\hat g_n(Z))|\hat g_n] - \IE[d(\varepsilon) \rho_{\varepsilon,A}(X)] \ge \frac{\rho}{2} d(\varepsilon)\big)\\
        &&\quad\quad + \IP(\IE \rho_{\varepsilon,A}(X) - \IP(X \in A) \ge \frac{\rho}{2}\big).
    \end{eqnarray*}
    While the second summand is 0 for $\varepsilon > 0$ small enough, the first summand tends to zero by $W_{1}^{\gamma}(\hat g_n) \overset{p}{\to} 0$. This shows that
    \[
        \lim_{n\to\infty}\IP\big(\IP(\hat g_n(Z) \in A|\hat g_n) - \IP(X \in A) \ge \rho\big) = 0.
    \]
    Using typical proof strategies from the portemanteau lemma, we first see that for any open subset $U \subset [0,1]$, and any $\rho > 0$,
    \[
        \lim_{n\to\infty}\IP\big(\IP(\hat g_n(Z) \in U|\hat g_n) - \IP(X \in U) \le -\rho\big) = 0.
    \]
    and for any $x\in[0,1]$ which is a continuity point of $F_X$ and any $\rho > 0$,
    \begin{equation}
        \lim_{n\to\infty}\IP\big(\big|\IP(\hat g_n(Z) \le x|\hat g_n) - F_X(x)\big| \ge \rho\big) = 0.\label{lemma_weak_convergence_distribution_eq2}
    \end{equation}
    From \reff{lemma_weak_convergence_distribution_eq1} and \reff{lemma_weak_convergence_distribution_eq2} we obtain that for $\rho > 0$ and any $x \in \IR$,
    \begin{eqnarray*}
        &&\limsup_{n\to\infty}\limsup_{N\to\infty}\IP\big(\big|\hat F_{N,n}(x)  - F_X(x)\big| \ge \rho\big)\\
        &\le& \limsup_{N\to\infty}\IP\big(\big|\hat F_{N,n}(x)  - \IP(\hat g_n(Z) \le x|\hat g_n)\big| \ge \frac{\rho}{2}\big)\\
        &&\quad\quad + \limsup_{n\to\infty}\IP\big(\big|\IP(\hat g_n(Z) \le x|\hat g_n)  - F_X(x)\big| \ge \frac{\rho}{2}\big) = 0.
    \end{eqnarray*}
    By continuity of $F_X$, standard decomposition arguments from the Polya theorem about uniform convergence of distribution functions provide
    \[
        \limsup_{n\to\infty}\limsup_{N\to\infty}\IP\big(\sup_{x\in[0,1]}\big|\hat F_{N,n}(x)  - F_X(x)\big| \ge \rho\big) = 0.
    \]
    The result of the lemma now follows for plugging in $x = X$.
\end{proof}

\section{Error Decomposition}\label{sec:err}

\subsection{Unconditional WGAN: Basic inequality}
We abbreviate $\gm = \ssG(d_Z,d_g,\beta,K)$ and $\rgt := \mathcal{R}(L_g,\mathbf{p}_g,s_g)$, $\rft = \mathcal{R}(L_f,\mathbf{p}_f,s_f)$.

Recall from \reff{wgan_unconditional} that
\[
    \hat g_n = \argmin_{g \in \rgt}\hat W_{1,n}(g).
\]

\begin{prop}[WGAN: Basic inequality]\label{basic_inequality_wgan}
    It holds that
    \[
        \wn(\hat g_n) - \inf_{g\in\gm} \wn(g) \le \sqrt{d}\cdot A_n + 2\cdot E_n,
    \]
    where
    \begin{eqnarray}
        A_n &:=& \sup_{g \in \gm}\inf_{\tilde g\in \rg}\|g - \tilde g\|_{\infty},\label{wgan_approx_error}\\
        E_n &:=& \sup_{f\in \rft} |(\hat\IP_n^X - \IP^X) f| +  \sup_{g\in \rgt, f\in \rft} |(\hat\IP_{n\Epo}^Z-\IP^Z)(f\circ g)|.\label{wgan_est_error}
    \end{eqnarray}
\end{prop}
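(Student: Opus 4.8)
The plan is a standard approximation–estimation decomposition, built on the elementary inequality $|\sup_a u(a)-\sup_a v(a)|\le\sup_a|u(a)-v(a)|$ applied to the supremum over the critic class $\{f\in\rft:\|f\|_L\le1\}$. First I would bound, uniformly over all $g\in\rgt$, the gap between the population objective $\wn$ and the empirical objective $\hat W_{1,n}$: for a fixed critic $f$ the difference of the two linear objectives equals $(\IP^X-\hat\IP_n^X)f-(\IP^Z-\hat\IP_{n\Epo}^Z)(f\circ g)$, so taking absolute values, the triangle inequality, and then the supremum over $f$ (dropping the Lipschitz constraint on $f$ in the first term and adding the supremum over $g\in\rgt$ in the second, both of which only enlarge the bound) give
\[
 |\wn(g)-\hat W_{1,n}(g)|\le E_n\qquad\text{for all }g\in\rgt .
\]

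Next I would introduce a near-minimizer and its network surrogate. Fix $\varepsilon>0$, choose $g_0\in\gm$ with $\wn(g_0)\le\inf_{g\in\gm}\wn(g)+\varepsilon$ (this $\varepsilon$-step is needed since the infimum over $\gm$ need not be attained), and by the definition of $A_n$ choose $\tilde g\in\rgt$ with $\|\tilde g-g_0\|_\infty\le A_n+\varepsilon$. For any $f$ with $\|f\|_L\le1$ and any $z$ one has $|f(\tilde g(z))-f(g_0(z))|\le|\tilde g(z)-g_0(z)|\le\sqrt d\,\|\tilde g-g_0\|_\infty$, the factor $\sqrt d$ coming from converting the Euclidean norm in the Lipschitz condition to the componentwise sup-norm defining $\|\cdot\|_\infty$. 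Integrating against $\IP^Z$ and taking the supremum over admissible $f$ yields $\wn(\tilde g)\le\wn(g_0)+\sqrt d\,(A_n+\varepsilon)$.

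Finally I would chain the pieces. Using the first display at $\hat g_n\in\rgt$ and at $\tilde g\in\rgt$, together with the defining optimality $\hat W_{1,n}(\hat g_n)\le\hat W_{1,n}(\tilde g)$,
\[
 \wn(\hat g_n)\le\hat W_{1,n}(\hat g_n)+E_n\le\hat W_{1,n}(\tilde g)+E_n\le\wn(\tilde g)+2E_n .
\]
Combining with the previous paragraph gives $\wn(\hat g_n)\le\inf_{g\in\gm}\wn(g)+\sqrt d\,A_n+2E_n+(1+\sqrt d)\varepsilon$, and letting $\varepsilon\downarrow0$ yields the stated bound.

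I do not anticipate a genuine obstacle: the argument is elementary once the decomposition is set up. The only things requiring care are the bookkeeping of which class each supremum or infimum ranges over — in particular that both $\tilde g$ and $\hat g_n$ lie in $\rgt$, so that the population-versus-empirical comparison applies to them and the second term of $E_n$ (with its supremum over $g\in\rgt$) dominates — the $\varepsilon$-approximation handling a possibly non-attained infimum over $\gm$, and the $\sqrt d$ passage between the Euclidean and componentwise-sup norms.
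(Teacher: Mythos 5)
Your proof is correct and follows the paper's own approximation--estimation decomposition: the uniform deviation bound $|\wn(g)-\ew(g)|\le E_n$ over $g\in\rgt$, the optimality of $\hat g_n$ for $\ew$, and the $\sqrt d$-Lipschitz control of $\wn$ with respect to $\|\cdot\|_\infty$ are exactly the paper's three ingredients. The only cosmetic difference is that you chain directly through a network surrogate $\tilde g$ of a near-minimizer $g_0\in\gm$, whereas the paper splits the excess risk into an estimation term $e_n$ and an approximation term $a_n=\inf_{\rgt}\wn-\inf_{\gm}\wn$ and bounds each separately via a near-minimizer over $\rgt$; the two bookkeeping choices are equivalent.
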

\begin{proof}[Proof of Proposition \ref{basic_inequality_wgan}]

First, we have
\[
\wn(\hat g_n)-\inf_{g\in\gm} \wn(g)\le e_n + a_n,
\]
where
\[
    e_n := \wn(\hat g_n) - \inf_{g\in\rgt} \wn(g)
\]
is the estimation error and
\[
    a_n := \inf_{g\in \rgt}\wn(g) - \inf_{g\in \gm}\wn(g)
\]
is the approximation error. Note that $A_n$ is upper bounded (not in absolute value!) as follows:
\begin{equation}
    a_n \le \sup_{g \in \gm}\inf_{\tilde g\in \rg}|W_{1,n}(g) - W_{1,n}(\tilde g)|.\label{basic_inequality_eq1}
\end{equation}
Since all functions $f$ in the supremum in $W_{1,n}$ satisfy $\|f\|_L \le 1$, we have
\begin{eqnarray*}
    && |W_{1,n}(g) - W_{1,n}(\tilde g)|\\
    &\le& \big|\sup_{f\in \rft,\|f\|_L\le 1}\big\{\IE f(X) - \IE f(g(Z))\big\} - \sup_{f\in \rft, \|f\|_L \le 1}\{\IE f(X) - \IE f(\tilde g(Z))\big\}\big|\\
    &\le& \sup_{f \in \rft, \|f\|_L\le 1} \IE|f(g(Z)) - f(\tilde g(Z))|\\
    &\le& \sqrt{d}\|g - \tilde g\|_{\infty}.
\end{eqnarray*}
We conclude from \reff{basic_inequality_eq1} that
\[
    a_n \le \sqrt{d}\inf_{g\in \rg}\sup_{\tilde g \in \gm}\|g - \tilde g\|_{\infty}.
\]
We now investigate the estimation error $E_n$. Let $\varepsilon > 0$. Then there exists $g^{*} \in \rgt$ with $\inf_{g \in \gm}\wn(g) \le \wn(g^{*}) + \varepsilon$. We obtain
\begin{equation}
    e_n = \wn(\hat g_n)-\inf_{g\in\gm} \wn(g) \le  \wn(\hat g_n)-\wn(g^{*}) \col{+}\co{-} \varepsilon.\label{basic_inequality_eq2}
\end{equation}
In order to bound $\wn(\hat g_n) - \wn(g^{*})$, note that by the minimization property of $\hat g_n$, 
\begin{eqnarray*}
    &&\wn(\hat g_n) - \wn(g^{*})\\
    &=& \ew(\hat g_n) - \ew(g^{*})\\
    &&\quad\quad -\Big(\{\ew(\eg) - \wn(\eg)\} - \{\ew(\gn) - \wn(\gn)\}\Big)\\
    &\le& 2\sup_{g\in\rgt} \big| \ew(g)-\wn(g) \big|.
\end{eqnarray*}
Letting $\varepsilon \downarrow 0$, we obtain from \reff{basic_inequality_eq2} that
\begin{equation}
    e_n \le 2\sup_{g\in\rgt} \big| \ew(g)-\wn(g) \big|.\label{basic_inequality_eq3}
\end{equation}
Note that
\begin{eqnarray*}
   &&\sup_{g\in \rgt} |\ew(g) - \wn(g)|\\
   &\le& \sup_{g\in \rgt} \Big| \sup_{f\in \rft} \big\{ (\hat\IP_n^X - \IP^X) f - (\hat\IP_{n\Epo}^Z-\IP^Z)(f\circ g)\big\}\Big|\notag\\
    &\le& \sup_{f\in \rft} |(\hat\IP_n^X - \IP^X) f| +  \sup_{g\in \rgt, f\in \rft} |(\hat\IP_{n\Epo}^Z-\IP^Z)(f\circ g)|
\end{eqnarray*}
Insertion into \reff{basic_inequality_eq3} yields the assertion.
\end{proof}

\subsection{Approximation error}

\label{sec:apprerror}
To bound the approximation error $A_n$, we use the approximation theory from \cite{sh} and statements about the Lipschitz constant in \cite{nathawut}.

\begin{thm}[\cite{sh}, Theorem 5 and \cite{nathawut}, Theorem 9.14]\label{reluapprox}
For all
\[
h\in C^\beta ([0,1]^r,K),\quad k\geq 1 \quad\text{and}\quad N\geq (\beta+1)^r\vee (K+1) e^r,
\]
there exists a network \[
\tilde h \in \relu\big(L, (r,6(r+\lceil\beta\rceil)N,\dots,6(r+\lceil\beta\rceil)N,1),s,\infty\big) \]
with \[
L=8+(k+5)(1+\lceil \log_2(r\vee\beta)\rceil) \quad\text{and}\quad s\le 141(r+\beta+1)^{3+r} N(k+6), \]
such that,
\[
\| h - \tilde h\|_{L^\infty([0,1]^r)}\le (2K+1)(1+r^2+\beta^2)6^r N 2^{-k}+K 3^\beta N^{-\beta/r}.
\]
Furthermore, $\tilde h$ satisfies for any $x,y \in [0,1]^r$ that
\[
    |\tilde h(x) - \tilde h(y)| \le \mathrm{Lip}(N,k) \cdot |x-y|_{\infty},
\]
where
\[
    \mathrm{Lip}(N,k) := 2\beta F(K+1)e^r( 24 r^6 2^r N 2^{-k} + 3r).
\]
\end{thm}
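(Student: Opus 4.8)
The plan is to derive the statement directly from the explicit network construction behind \cite[Theorem 5]{sh} together with the Lipschitz estimate of \cite[Theorem 9.14]{nathawut}. The bounds on the architecture, on $L$, on $s$, and on $\|h-\tilde h\|_{L^\infty([0,1]^r)}$ are exactly \cite[Theorem 5]{sh} once one matches the width parameter $N$ and the depth-controlling integer $k$ to Schmidt-Hieber's notation, so no new work is needed there; the only genuine task is the final bound on $\mathrm{Lip}(\tilde h)$, which I would obtain by propagating Lipschitz constants through the layered structure of $\tilde h$.

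Recall that $\tilde h$ is the composition of a constant number of shallow affine-plus-ReLU blocks that extract, from $x\in[0,1]^r$, the $\lesssim N$ localization coordinates of a grid partition of unity together with the $\le(\beta+1)^r$ Taylor monomials of degree $<\beta$, followed by a deep block of depth $\lesssim\lceil\log_2(r\vee\beta)\rceil$ that forms, through a binary tree of pairwise-multiplication gadgets, the products (bump)$\times$(monomial) and adds them with bounded coefficients. I would bound each ingredient separately. First, the shallow part is linear in $x$ composed with ReLUs and fixed affine maps, with entrywise-bounded weights and width $\lesssim(r+\beta)N$, so its Lipschitz constant with respect to $|\cdot|_\infty$ is $\lesssim(r+\beta)N$, up to the combinatorial factor $6^r$ counting cells incident to a point. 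Second, the multiplication gadget can be written as $\mathrm{Mult}(u,v)=R_k\big(\tfrac{u+v}{2}\big)-R_k\big(\tfrac{|u-v|}{2}\big)$, where $R_k(t)=t-\sum_{m=1}^{k}4^{-m}g_m(t)$ is the tent-map approximation of $t\mapsto t^2$ and $g_m$ is the $m$-fold iterate of $g(t)=2\min(t,1-t)$; since $\mathrm{Lip}(g_m)=2^m$ one gets $\mathrm{Lip}(R_k)\le 1+\sum_{m=1}^{k}2^{-m}\le 2$, so each gadget has an absolute Lipschitz constant on $[0,1]^2$. Third, composing the $\lesssim\lceil\log_2(r\vee\beta)\rceil$ tree levels costs a further factor polynomial in $r$ and $\beta$, while the product rule $\mathrm{Lip}(uv)\le\|u\|_\infty\mathrm{Lip}(v)+\|v\|_\infty\mathrm{Lip}(u)$ applied to (bump)$\times$(monomial), combined with $\|\tilde h\|_\infty\le F$ and $\|h\|_{C^\beta}\le K$, produces the prefactor $2\beta F(K+1)e^r$; the residual $3r$ is the Lipschitz constant of the ideal piecewise-polynomial interpolant, and the error term $24 r^6 2^r N 2^{-k}$ collects the $O(2^{-k})$ discrepancy between the gadgets and exact multiplication, multiplied by the combinatorial factors $r^6$, $2^r$ (active monomials and cells) and the $\lesssim N$ grid count.

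The main obstacle I anticipate is this last step: propagating the Lipschitz constant through the logarithmically deep multiplication tree while keeping the dependence on $r$ and $\beta$ polynomial rather than exponential in the depth, and cleanly isolating the $2^{-k}$-order correction. The crude estimate that $\mathrm{Lip}(\tilde h)$ is at most the product of the layer widths is hopelessly lossy; the improvement rests essentially on the geometric weights $4^{-m}$ in $R_k$ and on the sparsity of the deep block. Since both \cite{sh} and \cite{nathawut} carry out this bookkeeping in full detail, in the write-up I would restate their results with unified notation and defer the verification of the constants to those sources.
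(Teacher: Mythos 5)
The paper does not prove this statement; it is imported verbatim from \cite[Theorem 5]{sh} (architecture, depth, sparsity, and $L^\infty$ bound) and \cite[Theorem 9.14]{nathawut} (Lipschitz bound), and you correctly identify this and ultimately defer to those sources. Your sketch of the underlying construction (partition-of-unity and monomial blocks fed into a logarithmically deep tree of Yarotsky-style $R_k$ multiplication gadgets, with the geometric $4^{-m}$ weights keeping each gadget $O(1)$-Lipschitz) accurately reflects the arguments in those references, so your approach matches the paper's.
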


\begin{lem}\label{lem:approx}
    Let $\beta \ge 1$, $d_g \in\IN, \Epo\in\IN$. Let $ N\geq (\beta+1)^{d_g}\vee (K+1) e^{d_g}$.\\
    If $\rg = \mathcal{R}(L_g,\mathbf{p}_g,s_g)$ satisfies $F \ge K \vee 1$ and
    \[
        L_g\ge \log_2(n\Epo) \log_2(4d_g\vee 4\beta), \quad \min_{i=1,...,L_g}p_i\gtrsim dN \quad\text{and}\quad s_g\gtrsim d N \log_2(n\Epo),
    \]
    where the bounding constants only depend on $\beta,d_g$, then $A_n$ from \reff{wgan_approx_error} satisfies that for $n$ large enough,
    \[
        A_n \lesssim \frac{N}{n\Epo} + N^{-\beta/d_g},
    \]
    where the bounding constants only depend on $\beta,d_g$ and $K$.
\end{lem}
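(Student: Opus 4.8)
The plan is to reduce $A_n$ to a coordinate-wise approximation problem, apply the ReLU approximation bound of Theorem~\ref{reluapprox} to each component, and then stack and embed the resulting networks into $\rg=\mathcal{R}(L_g,\mathbf p_g,s_g)$. Fix an arbitrary $g=(g_1,\dots,g_d)\in\gm=\ssG(d_Z,d_g,\beta,K)$. By Definition~\ref{generatorclass_wgan}, each component $g_j$ depends on only $d_g$ of its arguments and, as a function of those, lies in $C^\beta([0,1]^{d_g},K)$; in particular $\|g_j\|_\infty\le K$. Hence it suffices to approximate every $g_j$ on $[0,1]^{d_g}$ by a ReLU network and run the $d$ networks in parallel.

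First I would apply Theorem~\ref{reluapprox} to each $g_j$ with $r=d_g$, the given $K$ and $N$ (the hypothesis $N\ge(\beta+1)^{d_g}\vee(K+1)e^{d_g}$ is exactly what the theorem requires), and the choice $k:=\lceil\log_2(n\Epo)\rceil$. This produces $\tilde g_j\in\relu\big(L,(d_g,w,\dots,w,1),s,\infty\big)$ with $w=6(d_g+\lceil\beta\rceil)N$, $L=8+(k+5)(1+\lceil\log_2(d_g\vee\beta)\rceil)$, $s\le 141(d_g+\beta+1)^{3+d_g}N(k+6)$, and
\[
  \|g_j-\tilde g_j\|_{L^\infty([0,1]^{d_g})}\le (2K+1)(1+d_g^2+\beta^2)6^{d_g}N\,2^{-k}+K\,3^\beta N^{-\beta/d_g}.
\]
Since $2^{k}\ge n\Epo$, the first term is $\lesssim N/(n\Epo)$ with a constant depending only on $\beta,d_g,K$, so $\|g_j-\tilde g_j\|_\infty\lesssim N/(n\Epo)+N^{-\beta/d_g}$. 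All $\tilde g_j$ share the same depth $L$, so stacking them (letting the first weight matrix of each block select the relevant $d_g$ coordinates out of $d_Z$) yields a single network $\tilde g:\IR^{d_Z}\to\IR^d$ of depth $L$, hidden widths $dw=6(d_g+\lceil\beta\rceil)dN$, and at most $ds$ nonzero parameters, with $\|g-\tilde g\|_\infty=\max_j\|g_j-\tilde g_j\|_\infty\lesssim N/(n\Epo)+N^{-\beta/d_g}$.

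It remains to fit $\tilde g$ into $(L_g,\mathbf p_g,s_g)$. Because $1+\lceil\log_2(d_g\vee\beta)\rceil\le\log_2(4d_g\vee4\beta)$, one gets $L\lesssim\log_2(n\Epo)\log_2(4d_g\vee4\beta)$, so $L\le L_g$ for $n$ large enough; the widths $6(d_g+\lceil\beta\rceil)dN$ are dominated by $\min_i p_{g,i}\gtrsim dN$ once the implicit constant exceeds $6(d_g+\lceil\beta\rceil)$; and $ds\le 141(d_g+\beta+1)^{3+d_g}dN(k+6)\lesssim dN\log_2(n\Epo)$ is dominated by $s_g$, all constants depending only on $\beta,d_g$. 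To match the exact architecture I would pad each layer to width $p_{g,i}$ with inactive neurons (which does not increase the parameter count) and raise the depth to $L_g$ by inserting identity layers; the latter is legitimate since after the first layer all activations are nonnegative, so $\sigma$ acts as the identity and each inserted layer is realized by a $0/1$ diagonal weight matrix, adding only $\lesssim dN\log_2(n\Epo)$ further nonzero parameters. The weight/bias bound $\le1$ is inherited from Theorem~\ref{reluapprox} and respected by the padding, and $\|\,|\tilde g|_\infty\,\|_{L^\infty([0,1]^{d_Z})}\le F$ follows from $\|g_j\|_\infty\le K\le F$ and the smallness of the error, composing if necessary with a bounded truncation (which does not enlarge the error and is implementable within the class). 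Thus $\tilde g\in\rg$, and taking the supremum over $g\in\gm$ gives the claimed bound $A_n\lesssim N/(n\Epo)+N^{-\beta/d_g}$ with constants depending only on $\beta,d_g,K$.

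The only real work is the embedding step: it is pure bookkeeping, but one must keep the depth-padding, width-padding, parallelization and truncation simultaneously compatible with all of the hard constraints defining $\mathcal{R}(L_g,\mathbf p_g,s_g)$ (weights bounded by $1$, total sparsity $s_g$, the prescribed fixed architecture, output bounded by $F$), while ensuring that none of the implicit constants introduced depends on $d$, $d_Z$ or $n$. Everything else is a direct substitution of $k\asymp\log_2(n\Epo)$ into the error bound of Theorem~\ref{reluapprox}.
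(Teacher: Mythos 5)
Your proof is correct and follows essentially the same route as the paper: apply Theorem~\ref{reluapprox} coordinate-wise with $k\asymp\log_2(n\Epo)$, stack the $d$ sub-networks in parallel, and verify the resulting architecture fits inside $(L_g,\mathbf p_g,s_g)$. The one small divergence is that the paper enforces $\|\tilde g\|_\infty\le F$ by rescaling the last weight matrix by $(\|g\|_\infty/\|\tilde g\|_\infty)\wedge 1\le 1$ (which stays trivially inside the class and at most doubles the error), whereas your truncation requires implementing a ReLU clipping with weight/bias bound $1$ and hence a few extra bookkeeping layers; both work, but the scaling trick is tidier.
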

\begin{proof}[Proof of Lemma \ref{lem:approx}]
Given $g\in\ssG$, we can write $g_i\in C^{\beta}([0,1]^{d_g}, K)$, since each component function only depends on $d_g$ arguments. Applying \Cref{reluapprox} with $k = \lceil\log_2(n\Epo)\rceil$ to each component function yields that there exists a $\tilde g$ in the class
\[
\relu\left( L, (d_g, 6(d_g+\lceil\beta\rceil) N,\dots,  6(d_g+\lceil\beta\rceil) N,1), s_g,\infty\right),
\]
such that $||g_i-\tilde g_i||_\infty \lesssim N 2^{-k} + N^{-\beta/d_g}$, where $L = k \lceil\log_2(4d_g\vee 4\beta) \rceil,$ $s\lesssim N k$ and the bounding constants only depend on $K,d_g,\beta$.\\
Thus a network computing all $\tilde g:=(\tilde g_i)_{i=1,\dots,d}$ in parallel lies in the class \[
\relu\left( L, (d_g, 6 d (d_g+\lceil\beta\rceil) N,\dots,6 d (d_g+\lceil\beta\rceil) N, d), ds,\infty\right),
\]
and it holds that \begin{align}
    || g- \tilde g ||_\infty \lesssim N 2^{-k} + N^{-\beta/d_g}.\label{approx_bound}
\end{align}
$\tilde g$ may not satisfy $\|\tilde g\|_{\infty} \le F$. However, $\tilde g^{\circ} := (\frac{\|g\|_{\infty}}{\|\tilde g\|_{\infty}}\wedge 1)\tilde g$ still fulfills $\tilde g^{\circ} \in \relu( L,  p, s)$ and $\|\tilde g^{\circ}\|_{\infty} \le \|g\|_{\infty} \le K \le F$. Due to $\|\tilde g^{\circ} - g\|_{\infty} \le 2\|\tilde g - g\|_{\infty}$, \reff{approx_bound} still holds for $\tilde g^{\circ}$ with changed constants. 
\end{proof}

\subsection{Estimation error}

To upper bound the entropy bracketing numbers of the neural network sets $\mathcal{R}(L,\mathbf{p},s)$, we use the following Lemma taken from \cite{sh}.

For a class $\ssF \subset \{f:\IR^r \to \IR \text{ measurable}\}$ and some norm $\|\cdot\|$ on $\ssF$, we denote by $N_{[]}(\varepsilon, \ssF, \|\cdot\|)$ the number of $\varepsilon$-brackets which are needed to cover $\ssF$. Here, an $\varepsilon$-bracket $[l,u]$ is a set $[l,u] = \{f\in \ssF| \forall x\in \IR^r: l(x) \le f(x) \le u(x)\}$ such that $\|u - l\| \le \varepsilon$.

The bracketing entropy integral of $\ssF$ with respect to $\|\cdot\|$ is denoted by
\[
    J_{[]}(\delta, \ssF, \|\cdot\|) = \int_0^{\delta}\sqrt{1+\log N_{[]}(\varepsilon,\ssF, \|\cdot\|)} \dif \varepsilon.
\]
The covering numbers $N(\varepsilon, \ssF, \|\cdot\|)$ denote the least number of elements $v_1,...,v_m \in \ssF$ such that $\ssF \subset \bigcup_{j=1}^{m}\{y \in \ssF:\|y-v_j\| < \varepsilon\}$. Accordingly, we define the covering entropy integral $J(\delta, \ssF, \|\cdot\|) = \int_0^{\delta}\sqrt{1+\log N(\varepsilon,\ssF, \|\cdot\|)} \dif \varepsilon$. We need both bracketing and covering numbers since the approximation results in \cite{sh} were defined in terms of covering numbers while the empirical process results of \cite{dl} are in terms of bracketing numbers. However, there is a simple connection via
\begin{equation}
    N_{[]}(\delta,\ssF,\|\cdot\|) \le N(\frac{\delta}{2},\ssF,\|\cdot\|).\label{bracket_covering}
\end{equation}

For mixing coefficients $\beta_X(k)$, $k\in\IN_0$, \cite{betanorm} defined the $\|f\|_{2,\beta}$-norm as follows: Let $\beta_X^{-1}$ be the cadlag inverse of $\beta_X(t)=\beta(\lfloor t \rfloor)$ for $t\geq 1$ and $\beta_X(t)=1$ otherwise. Let $Q_f$ be the inverse of the tail function $t\mapsto\P(|f(X_1)|>t)$. Define 
\[
    \|f\|_{2,\beta} = \Big( \int_0^1 \beta_X^{-1} (u) Q_f(u)^2 du \Big)^{1/2}.
\]
In \cite[Lemma 1]{betanorm} it is stated that for $B := \sum_{k=0}^{\infty}\beta_X(k)$, one has
\begin{equation}
    \|f\|_{2,\beta}\le B^{1/2}\cdot \|f\|_{\infty}.\label{standardbound_betnorm}
\end{equation}

\begin{lem}[\cite{sh}, Lemma 5] \label{shentr}
For all $\delta>0,$ it holds that
\[
\log\Big( N\big(\delta,\relu(L,\mathbf{p},s,\infty),\|\cdot\|_\infty\big)\Big)\le (s+1) \log\Big( 2 \frac{L+1}{\delta} \big(\prod_{l=0}^{L+1} (p_l +1)\big)^2 \Big).
\]
\end{lem}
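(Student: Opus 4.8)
The plan is to combine a combinatorial count of the admissible sparsity patterns of a network in $\relu(L,\mathbf{p},s,\infty)$ with a quantitative estimate controlling how the realized function changes when the active weights are perturbed, and then discretize. Fix the architecture $(L,\mathbf{p})$, abbreviate $V := \prod_{l=0}^{L+1}(p_l+1)$, and let $T$ denote the total number of weight and bias entries; one has the crude bound $T\le V$.

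The first and main step I would carry out is a parameter-Lipschitz estimate: if $h,\tilde h\in\relu(L,\mathbf{p},s,\infty)$ share the architecture, all their weight and bias entries lie in $[-1,1]$, and they differ entrywise by at most $\gamma$, then $\|h-\tilde h\|_{L^\infty([0,1]^{p_0})}\le \gamma\,(L+1)\,V$. I would prove this by induction over the depth. Writing $h^{(l)}$ for the vector produced after the first $l$ affine-and-ReLU blocks of $h$, one first shows $|h^{(l)}|_\infty\le \prod_{k=0}^{l-1}(p_k+1)$, using $|x|_\infty\le 1$ on $[0,1]^{p_0}$, that a matrix with entries in $[-1,1]$ inflates the $\infty$-norm by at most a factor equal to its number of inputs, that a bias shift adds at most $1$, and that $\sigma$ applied componentwise is $1$-Lipschitz and fixes the origin. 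Then I would telescope $h-\tilde h$ over the $L+1$ layers, the $m$-th term keeping the first $m$ layers of $h$ and the last layers of $\tilde h$, so as to isolate the effect of perturbing layer $m$; that effect is at most $\gamma$ for a bias and at most $\gamma\,p_m\,|h^{(m)}|_\infty$ for a weight, and it is propagated through the remaining layers of $\tilde h$, each of which is Lipschitz with constant at most its number of inputs (matrices) or $1$ (ReLU blocks). Multiplying these factors and summing over the $L+1$ layers yields the stated bound.

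Next I would build an explicit $\delta$-net. A network in $\relu(L,\mathbf{p},s,\infty)$ is determined by the set $S$ of at most $s$ parameters allowed to be nonzero --- at most $\binom{T}{s}\le T^s$ choices --- together with the values of those parameters in $[-1,1]$; placing each active value on an equispaced grid of $[-1,1]$ of mesh $\gamma$ leaves at most $(\lceil 2/\gamma\rceil+1)^s$ quantized choices. By the parameter-Lipschitz estimate, every network lies within $\gamma(L+1)V$ in $\|\cdot\|_\infty$ of one of these finitely many quantized networks. Choosing $\gamma$ with $\gamma(L+1)V=\delta$ and using $T\le V$ gives $N(\delta,\relu(L,\mathbf{p},s,\infty),\|\cdot\|_\infty)\le \big(cV^2(L+1)/\delta\big)^{s}$ for a numerical constant $c$; a short computation absorbing $c$ and the residual polynomial factors into one further factor of $2V^2(L+1)/\delta$ --- which is precisely where the exponent passes from $s$ to $s+1$ --- yields $\log N(\delta,\relu(L,\mathbf{p},s,\infty),\|\cdot\|_\infty)\le (s+1)\log\!\big(2\tfrac{L+1}{\delta}V^2\big)$, the claimed inequality.

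The step I expect to be the main obstacle is exactly the parameter-Lipschitz estimate: one must simultaneously track the potentially large growth of the intermediate activations through the successive matrix multiplications and the accumulation of per-layer perturbations, and then package the depth factor, the width factors (with the $+1$ stemming from the bias vectors), and the square factor (stemming from combining the sparsity-pattern count $T\le V$ with the grid count) into precisely the form $\tfrac{L+1}{\delta}V^2$ appearing in the lemma. The combinatorial and discretization parts, and the final constant bookkeeping, are routine once that estimate is in place --- they are carried out in detail in \cite{sh}.
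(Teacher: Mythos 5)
The paper does not prove this lemma; it simply cites it verbatim as Lemma~5 of \cite{sh}, so there is no ``paper's own proof'' to compare against. Your two-step argument --- the parameter-Lipschitz estimate obtained by bounding the intermediate activations and telescoping layer by layer, followed by the $\binom{T}{s}\cdot(\text{grid})^s$ count with $T\le V$ --- is exactly the proof given in \cite{sh} (their Lemma~4 followed by Lemma~5), so your reconstruction matches the source the paper relies on.
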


In the following, we use the following abbreviation
\begin{equation}
    \gamma(L,\mathbf{p},s):=2(s+1)\log\big(4(L+1)\prod_{l=0}^{L+1} (p_l+1)\big).\label{definition_groesse_netzwerk}
\end{equation}

The following lemma is the basic result we use to bound the estimation error both in expectation and with high probability. It makes use of maximal inequalities and large deviation bounds derived in Section \ref{sec:entropybound} for mixing sequences.

\begin{lem}\label{lem:ent}
Suppose that there exist constants $\kappa > 1, \alpha > 1$ such that for all $k\in\IN$, $\beta_X(k) \le \kappa\cdot k^{-\alpha}$. Suppose that
\[
    \gamma(L,\mathbf{p},s) \le n.
\]
Then there exists a constant $C>0$ only depending on characteristics of $(X_i)$ and $B,F,\kappa,\alpha$ such that
\begin{equation}
\mathds{E}^* \sup_{f\in \relu(L,\mathbf{p},s)}|(\hat \IP_n^X-\IP^X)f|\le C\cdot \Big(\frac{\gamma(L,\mathbf{p},s)}{n}\Big)^{1/2}.\label{lem:ent_eq1}
\end{equation}
Furthermore, with probability at least $1 - 2n^{-1} - (\frac{\log(n)}{n})^{\frac{\alpha-1}{2}}$ and a different constant $C > 0$ depending on the same quantities,
\begin{equation}
    \sup_{f\in \relu(L,\mathbf{p},s)}|(\hat \IP_n^X-\IP^X)f| \le C\cdot \Big[\Big(\frac{\gamma(L,\mathbf{p},s)}{n}\Big)^{1/2} + \Big(\frac{\log(n)}{n}\Big)^{1/2}\Big].\label{lem:ent_eq2}
\end{equation}
\end{lem}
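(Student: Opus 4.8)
The plan is to combine three ingredients: (a) the metric entropy bound for the network class from \cite{sh}; (b) an abstract maximal inequality for absolutely regular empirical processes phrased in the $\|\cdot\|_{2,\beta}$-norm of \cite{betanorm}, as developed in Section \ref{sec:entropybound} together with the tools from \cite{dl}; and (c) a Berbee coupling combined with the refined Talagrand concentration inequalities of \cite{bousquet_bennett} and \cite{klein2005} to upgrade the bound in expectation to one in probability. Since $\beta_X(k)\le\kappa k^{-\alpha}$ with $\alpha>1$, the quantity $B:=\sum_{k\ge 0}\beta_X(k)$ is finite, which is what makes $\|\cdot\|_{2,\beta}$ comparable to $\|\cdot\|_\infty$ via \reff{standardbound_betnorm}. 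The only genuinely new work here is verifying that plugging the network entropy into these abstract results produces exactly $\gamma(L,\mathbf p,s)$ from \reff{definition_groesse_netzwerk}.

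First I would estimate the bracketing numbers of $\relu(L,\mathbf p,s)$ in $\|\cdot\|_{2,\beta}$. Every $f\in\relu(L,\mathbf p,s)$ satisfies $\|f\|_\infty\le F$, so the envelope $F_{*}\equiv F$ is constant; moreover $\|f-g\|_{2,\beta}\le B^{1/2}\|f-g\|_\infty$ by \reff{standardbound_betnorm}, so an $\varepsilon/(2B^{1/2})$-cover of $\relu(L,\mathbf p,s)$ in $\|\cdot\|_\infty$ produces an $\varepsilon$-bracketing in $\|\cdot\|_{2,\beta}$ via \reff{bracket_covering}. Lemma \ref{shentr} then gives $\log N_{[]}(\varepsilon,\relu(L,\mathbf p,s),\|\cdot\|_{2,\beta})\lesssim \gamma(L,\mathbf p,s)+(s+1)\log(1/\varepsilon)$. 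Integrating from $0$ to the constant envelope $\|F_{*}\|_{2,\beta}\le B^{1/2}F$, and using that $\int_0^{c}\sqrt{\log(1/\varepsilon)}\,d\varepsilon<\infty$ together with $s+1\lesssim\gamma(L,\mathbf p,s)$, yields $J_{[]}(\|F_{*}\|_{2,\beta},\relu(L,\mathbf p,s),\|\cdot\|_{2,\beta})\lesssim\gamma(L,\mathbf p,s)^{1/2}$. Feeding this into the maximal inequality for $\beta$-mixing empirical processes gives $\IE^{*}\sup_{f\in\relu(L,\mathbf p,s)}|(\hat\IP_n^X-\IP^X)f|\lesssim n^{-1/2}J_{[]}(\cdot)+(\text{lower-order terms})\lesssim(\gamma(L,\mathbf p,s)/n)^{1/2}$, where the hypothesis $\gamma(L,\mathbf p,s)\le n$ ensures the chaining term dominates the additive remainders; this is \reff{lem:ent_eq1}.

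For \reff{lem:ent_eq2} I would partition $\{1,\dots,n\}$ into consecutive blocks of length $\ell_n$ and apply Berbee's coupling lemma blockwise to construct i.i.d.\ block variables that coincide with the original blocks outside an event of probability at most $\lceil n/\ell_n\rceil\,\beta_X(\ell_n)$. Choosing $\ell_n$ of order $(n/\log n)^{1/2}$ (up to logarithmic factors), the bound $\beta_X(\ell_n)\le\kappa\ell_n^{-\alpha}$ makes this coupling error of order $(\log(n)/n)^{(\alpha-1)/2}$, which is the corresponding term in the stated probability. On the coupling event $\sup_{f}|(\hat\IP_n^X-\IP^X)f|$ differs from its independent-block version by a negligible amount, and to the latter I would apply the Bousquet/Klein--Rio form of Talagrand's inequality: the weak variance is of order $F^2\ell_n/n$ and the uniform bound of order $F\ell_n/n$, so with probability at least $1-2n^{-1}$ the supremum concentrates around its mean up to a fluctuation of order $(\gamma(L,\mathbf p,s)/n)^{1/2}+(\log(n)/n)^{1/2}$. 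Combining with the mean bound \reff{lem:ent_eq1} gives \reff{lem:ent_eq2}.

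The main obstacle is the bookkeeping in the mixing step: the block length $\ell_n$ must be chosen so that simultaneously the Berbee coupling error is of the announced order $(\log(n)/n)^{(\alpha-1)/2}$ and the factor $\ell_n$ does not inflate the variance and sup-norm terms entering Talagrand's inequality beyond $(\gamma(L,\mathbf p,s)/n)^{1/2}$ and $(\log(n)/n)^{1/2}$; the condition $\gamma(L,\mathbf p,s)\le n$ is exactly what keeps all remainder terms subordinate to the leading order. A minor secondary issue is measurability of the suprema over $\relu(L,\mathbf p,s)$, which is why outer expectations appear in \reff{lem:ent_eq1}; this is dispatched by the usual separability of the continuously parametrized network class.
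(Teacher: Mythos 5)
Your proposal is correct and follows essentially the same route as the paper: plug the Schmidt-Hieber entropy bound (Lemma \ref{shentr}) into the $\beta$-mixing maximal inequality of Dedecker--Louhichi / Doukhan--Massart--Rio (packaged in the paper as Lemma \ref{thm:beta}), use $\gamma(L,\mathbf p,s)\le n$ to make the remainder subordinate, and then upgrade to a deviation bound via Berbee coupling with block length of order $(n/\log n)^{1/2}$ plus the Klein--Rio Talagrand inequality, using Rio's covariance bound to get a block variance linear (rather than quadratic) in the block length. The paper isolates this second step as Lemma \ref{lem_talagrand_mixing} with the explicit choices $x=\log n$, $z=(\log n/n)^{(\alpha+1)/2}$, which is exactly the bookkeeping you describe; your sketch is faithful to it, modulo the fact that the ``weak variance of order $F^2\ell_n/n$'' should be understood as $\mathrm{Var}(W_1)/n\lesssim BF^2\ell_n/n$ coming from Rio's variance bound, which is what keeps the variance term of the announced order.
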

\begin{proof}[Proof of Lemma \ref{lem:ent}]
We abbreviate $\relu=\relu(L,\mathbf{p},s,F)$. Using \Cref{shentr}, we get for all $\delta>0$,
\[
    \log N(\delta,\relu,\|\cdot\|_\infty) \le  \gamma(L,\mathbf{p},s)-(s+1) \log(\delta).\label{eq:ent1}
\]
Using the simple bound $\sqrt{a+b}\le \sqrt{a}+\sqrt{b}$, the bracketing integral is upper bounded by
\begin{align*}
J(\delta, \relu, \|\cdot\|_{\infty})&= \int_0^\delta \sqrt{1+\gamma(L,\mathbf{p},s)-(s+1)\log(\ep)}d\ep\\ &\le \int_0^\delta \big(1 + \sqrt{\gamma(L,\mathbf{p},s)}\big)d\ep + \sqrt{s+1} \int_0^1 \sqrt{-\log(\ep)} d\ep \\
&= \delta + \sqrt{\gamma(L,\mathbf{p},s)}\delta +\frac{\sqrt{(s+1)\pi}}{2}\le c\cdot  \gamma(L,\mathbf{p},s)^{1/2}(1+\delta),
\end{align*}
where $c \ge 1$ is some universal constant.

By Lemma \ref{thm:beta}, we have with some constants $K_1,K_2 > 0$ only depending on characteristics of $X_1$, 
\[
    \IE^{*} \sup_{f \in \relu}\big|(\hat \IP_n^X - \IP^X)f\big| \le r_n,
\]
where
\begin{eqnarray*}
    r_n &:=& K_1\cdot n^{-1/2}J_{[]}(F,\relu,\|\cdot\|_{\infty}) + K_2 F\cdot \Big(\frac{1 \vee N_{[]}(2BF,\relu,\|\cdot\|_{\infty})}{n}\Big)^{\frac{\alpha}{\alpha+1}}\\
    &\le& K_1\cdot n^{-1/2}J(\frac{F}{2},\relu,\|\cdot\|_{\infty}) + K_2 F\cdot \Big(\frac{1 \vee N(BF,\relu,\|\cdot\|_{\infty})}{n}\Big)^{\frac{\alpha}{\alpha+1}}\\
    &\le& C\cdot \Big(\big(\frac{\gamma(L,\mathbf{p},s)}{n}\big)^{1/2} + \big(\frac{\gamma(L,\mathbf{p},s)}{n}\big)^{\frac{\alpha}{\alpha+1}}\Big), 
\end{eqnarray*}
and $C > 0$ depends on $F,B,K_1,K_2$. Since $\gamma(L,\mathbf{p},s) \le n$ and $\alpha > 1$, the second summand is dominated by the first. This yields \reff{lem:ent_eq1}.

Note that $\relu(L,\mathbf{p},s,F)$ is separable in $\{f:[0,1]^d \to \IR \text{ meas.}, \|f\|_{\infty} \le F\}$, therefore $\sup_{f\in \relu(L,\mathbf{p},s)}\big|(\hat \IP_n^X - \IP^X)f\big|$ is measurable and
\[
    \IP\Big(\sup_{f\in \relu(L,\mathbf{p},s)}\Big|(\hat \IP_n^X - \IP^X)f\Big| > x\Big) \le \sup_{S\subset \relu(L,\mathbf{p},s) \text{ countable}}\IP\Big(\sup_{f\in S}\Big|(\hat \IP_n^X - \IP^X)f\Big| > x\Big).
\]
By Lemma \ref{lem_talagrand_mixing}, there exists some constant $C_2 > 0$ depending on $F,B,\kappa,\alpha$ such that
\[
    \IP\Big(\sup_{f\in \relu(L,\mathbf{p},s)}\Big|(\hat \IP_n^X - \IP^X)f\Big| \ge C_2\cdot \big(r_n + (\frac{x}{n})^{1/2} + \frac{x}{n}\cdot z^{-\frac{1}{\alpha+1}}\big)\Big) \le 2\exp(-x) + \frac{nz}{x}.
\]
With $x = \log(n)$ and $z = \big(\frac{\log(n)}{n}\big)^{\frac{\alpha+1}{2}}$, we obtain
\[
    \IP\Big(\sup_{f\in \relu(L,\mathbf{p},s)}\Big|(\hat \IP_n^X - \IP^X)f\Big| \ge C_2\cdot \big(r_n + 2(\frac{\log(n)}{n})^{1/2}\big)\Big) \le \frac{2}{n} + \Big(\frac{\log(n)}{n}\Big)^{\frac{\alpha-1}{2}},
\]
which yields \reff{lem:ent_eq2}.
\end{proof}

\begin{lem}[Upper bound on the estimation error]\label{lem_upperbound_estimation_wgan}
    Suppose that there exist constants $\kappa > 1, \alpha > 1$ such that for all $k\in\IN$, $\beta_X(k) \le \kappa\cdot k^{-\alpha}$. Suppose that $\gamma(L_f,\mathbf{p}_f,s_f) \le n$ and $\gamma(L_g\vee L_f,\mathbf{p}_g \vee \mathbf{p}_f,s_g\vee s_f) \le n\Epo$. Then there exists some constant $C > 0$ only depending on characteristics of $X_1$ and $F,\kappa,\alpha$ such that
    \[
        \IE E_n \le C\cdot\Big[ \Big(\frac{\gamma(L_f,\mathbf{p}_f,s_f)}{n}\Big)^{1/2} + \Big(\frac{\gamma(L_g\vee L_f,\mathbf{p}_g \vee \mathbf{p}_f,s_g\vee s_f)}{n\Epo}\Big)^{1/2}\Big].
    \]
    Furthermore, with probability at least $1 - 4n^{-1} - 2(\frac{\log(n)}{n})^{\frac{\alpha-1}{2}}$,
    \[
        E_n \le C\cdot\Big[ \Big(\frac{\gamma(L_f,\mathbf{p}_f,s_f)}{n}\Big)^{1/2} + \Big(\frac{\gamma(L_g\vee L_f,\mathbf{p}_g \vee \mathbf{p}_f,s_g\vee s_f)}{n\Epo}\Big)^{1/2} + \Big(\frac{\log(n)}{n}\Big)^{1/2}\Big].
    \]
\end{lem}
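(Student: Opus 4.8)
The plan is to estimate the two summands of $E_n$ in \reff{wgan_est_error}, namely $\sup_{f\in\mathcal{R}(L_f,\mathbf{p}_f,s_f)}|(\hat\IP_n^X-\IP^X)f|$ and $\sup_{g\in\mathcal{R}(L_g,\mathbf{p}_g,s_g),\,f\in\mathcal{R}(L_f,\mathbf{p}_f,s_f)}|(\hat\IP_{n\Epo}^Z-\IP^Z)(f\circ g)|$, separately, reducing each one to a single application of Lemma \ref{lem:ent}. For the first summand this is immediate: Lemma \ref{lem:ent} applies to $\mathcal{R}(L_f,\mathbf{p}_f,s_f)$ with the $\beta$-mixing process $X_1,\dots,X_n$ and sample size $n$, since by assumption $\gamma(L_f,\mathbf{p}_f,s_f)\le n$ and $\beta_X(k)\le\kappa k^{-\alpha}$. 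It yields $\IE\,\sup_{f\in\mathcal{R}(L_f,\mathbf{p}_f,s_f)}|(\hat\IP_n^X-\IP^X)f|\lesssim(\gamma(L_f,\mathbf{p}_f,s_f)/n)^{1/2}$, and with probability at least $1-2n^{-1}-(\log(n)/n)^{(\alpha-1)/2}$ the additional summand $(\log(n)/n)^{1/2}$; measurability of the supremum (so that the outer expectation in Lemma \ref{lem:ent} becomes an ordinary one) follows from separability of $\mathcal{R}(L_f,\mathbf{p}_f,s_f,F)$ exactly as in the proof of Lemma \ref{lem:ent}.

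For the second summand, the key point is that for fixed $f$ and $g$ the composition $f\circ g:[0,1]^{d_Z}\to\IR$ is again a ReLU network. Stacking the architectures of $g$ and $f$ and inserting one auxiliary hidden layer of width $2d$ which realises the identity on the range of $g$ via $x=\sigma(x)-\sigma(-x)$ (so that the weight matrices of $g$ and $f$ stay untouched and sparse, with entries still bounded by $1$), one obtains
\[
\big\{f\circ g:\ g\in\mathcal{R}(L_g,\mathbf{p}_g,s_g),\ f\in\mathcal{R}(L_f,\mathbf{p}_f,s_f)\big\}\subseteq\mathcal{R}(\tilde L,\tilde{\mathbf{p}},\tilde s)
\]
with $\tilde L=L_g+L_f+1$, $\tilde s\lesssim s_g\vee s_f$ and a width vector $\tilde{\mathbf{p}}$ obtained by concatenation; moreover $\|f\circ g\|_\infty\le\|f\|_\infty\le F$. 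Since $\gamma(L,\mathbf{p},s)$ depends on the architecture only through $s$, $L$ and $\prod_l(p_l+1)$, and since $\prod_l(p_{g,l}+1)\prod_l(p_{f,l}+1)\le\big(\prod_l(p_{g,l}\vee p_{f,l}+1)\big)^2$, a routine bookkeeping gives $\gamma(\tilde L,\tilde{\mathbf{p}},\tilde s)\lesssim\gamma(L_g\vee L_f,\mathbf{p}_g\vee\mathbf{p}_f,s_g\vee s_f)$ with a constant depending only on $d$. Because the variables $Z_{i,j}$, $i\le n$, $j\le\Epo$, are i.i.d., they form a (trivially) absolutely regular $\beta$-mixing sequence of length $n\Epo$ with $\beta_Z(k)=0$ for $k\ge1$ and $B_Z=1$; thus the mixing hypothesis of Lemma \ref{lem:ent} holds with the same $\kappa,\alpha$, and the assumption $\gamma(L_g\vee L_f,\mathbf{p}_g\vee\mathbf{p}_f,s_g\vee s_f)\le n\Epo$ gives $\gamma(\tilde L,\tilde{\mathbf{p}},\tilde s)\lesssim n\Epo$. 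Applying Lemma \ref{lem:ent} to $\mathcal{R}(\tilde L,\tilde{\mathbf{p}},\tilde s)$ with sample size $n\Epo$ then bounds the second summand by $\lesssim(\gamma(L_g\vee L_f,\mathbf{p}_g\vee\mathbf{p}_f,s_g\vee s_f)/(n\Epo))^{1/2}$ in expectation, and with probability at least $1-2(n\Epo)^{-1}-(\log(n\Epo)/(n\Epo))^{(\alpha-1)/2}$ by the same quantity plus $(\log(n\Epo)/(n\Epo))^{1/2}$.

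Summing the two bounds yields the expectation statement. For the high-probability statement I would intersect the two events above; since $\Epo\ge1$ implies $\log(n\Epo)/(n\Epo)\le\log(n)/n$ (for $n$ large), one has $(n\Epo)^{-1}\le n^{-1}$, $(\log(n\Epo)/(n\Epo))^{(\alpha-1)/2}\le(\log(n)/n)^{(\alpha-1)/2}$ and $(\log(n\Epo)/(n\Epo))^{1/2}\le(\log(n)/n)^{1/2}$, so a union bound gives exactly the probability $1-4n^{-1}-2(\log(n)/n)^{(\alpha-1)/2}$ together with the stated estimate. The only genuinely technical step is the composition bound above — the sparsity-and-width bookkeeping ensuring that $f\circ g$ ranges over a ReLU class whose $\gamma$-complexity is $\lesssim\gamma(L_g\vee L_f,\mathbf{p}_g\vee\mathbf{p}_f,s_g\vee s_f)$, together with the harmless enlargement of the constant in the hypothesis $\gamma(\cdot)\le n\Epo$ of Lemma \ref{lem:ent}; everything else is a direct appeal to that lemma.
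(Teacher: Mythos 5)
Your proof is correct and follows essentially the same route as the paper's: decompose $E_n$ into its two summands, observe that $f\circ g$ lies in a single ReLU class whose $\gamma$-complexity is bounded (up to a universal constant) by $\gamma(L_f\vee L_g,\mathbf{p}_f\vee\mathbf{p}_g,s_f\vee s_g)$, and apply Lemma \ref{lem:ent} separately to the $X$-sample (mixing, size $n$) and to the i.i.d.\ $Z$-sample (trivially mixing, size $n\Epo$). You spell out two details the paper leaves implicit — the auxiliary identity layer needed so that the concatenated weight matrices remain entrywise bounded by $1$ and the resulting network sparse, and the monotonicity $\log(n\Epo)/(n\Epo)\le\log(n)/n$ that lets the two tail bounds be combined into the stated $1-4n^{-1}-2(\log(n)/n)^{(\alpha-1)/2}$ — but these are exactly the bookkeeping steps the paper's one-line assertion "$\gamma(\text{composed})\le c\,\gamma(L_f\vee L_g,\mathbf{p}_f\vee\mathbf{p}_g,s_f\vee s_g)$'' silently absorbs.
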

\begin{proof}[Proof of Lemma \ref{lem_upperbound_estimation_wgan}]
    With $g \in \mathcal{R}_G$, $f\in \mathcal{R}_D$, we have
    \[
        f\circ g \in \relu := \relu(L_g+L_f+1, (d_z,p_{g1},\dots,p_{gL_g},d, p_{f1},\dots,p_{fL_f},1), s_g+s_f).
    \]
    Note that there exists some universal constant $c > 0$ such that
    \begin{eqnarray*}
        &&\gamma(L_g+L_f+1,(d_z,p_{g1},\dots,p_{gL_g},d, p_{f1},\dots,p_{fL_f},1),s_g+s_f)\\
        &\le& c\cdot \gamma(L_f\vee L_g, \mathbf{p}_f \vee \mathbf{p}_g, s_f \vee s_g),
    \end{eqnarray*}
    where $x\vee y$ of vectors $x,y$ is meant component-wise.
    
    We now apply Lemma \ref{lem:ent} to both summands of $E_n$. The first summand reads
    \[
        \sup_{f\in \mathcal{R}_D}\big|(\hat \IP_n^X - \IP^X)f\big|
    \]
    with $\beta$-mixing $X_i$. The second summand of $E_n$ is upper bounded by
    \[
        \sup_{h \in \relu}\big|(\hat \IP_{n\Epo}^Z - \IP^Z)h\big|
    \]
    with i.i.d. $Z_i$, that is, $\beta$-mixing coefficients $\beta_Z(k) = \Ii_{\{k = 0\}}$ ($k\ge 0$).
\end{proof}

\begin{proof}[Proof of Theorem \ref{theorem_wgan_excessbayes}]
    By Proposition \ref{basic_inequality_wgan},
    \[
        R_n(\hat g_n) \le \sqrt{d}\cdot A_n + 2\cdot E_n.
    \]
    Under the given assumptions on $L_f, \mathbf{p}_f, s_f$, we conclude from \reff{definition_groesse_netzwerk} (cf. also Remark 1 in \cite{sh}) that 
    \begin{eqnarray*}
        &&\gamma(L_f,\mathbf{p}_f,s_f)\\
        &\le& 2(s_f+1)\log\big(2^{L_f+3}(L_f+1)p_0 p_{L+1} s_f^{L_f}\big) \lesssim s_f L_f \log(s_f L_f).
    \end{eqnarray*}
    
    Under the given assumptions on $L_g, \mathbf{p}_g,s_g$, we conclude by Lemma \ref{lem:approx} for $N$ large enough that
    \[
        A_n \lesssim \frac{N}{n\Epo} + N^{-\beta/d_g}.
    \]
    Thus by Lemma \ref{lem_upperbound_estimation_wgan},
    \begin{eqnarray*}
        \IE R_n(\hat g_n) &\lesssim& \Big(\frac{s_f L_f \log(s_f L_f)}{n}\Big)^{1/2} + N^{-\beta/d_g}\\
        &&\quad\quad + \Big(\frac{(s_f\vee s_g) (L_f\vee L_g) \log((s_f \vee s_g) (L_f \vee L_g))}{n\Epo}\Big)^{1/2}.
    \end{eqnarray*}
    
    Choose $N = \lceil C_1 n\Epo\phi_n\rceil$, where $C_1$ is large enough such that $N \ge (\beta+1)^{d_g} \vee (K+1)e^{d_g}$, then
    \[
         \IE R_n(\hat g_n) \lesssim \Big(\frac{s_f L_f \log(s_f L_f)}{n}\Big)^{1/2} + \phi_n^{1/2}\log(n\Epo)^{3/2}.
    \]
    The large deviation statement is immediate from Lemma \ref{lem_upperbound_estimation_wgan}.
\end{proof}

\subsection{Adaptation to the conditional case}
We follow the same procedure as in the unconditional case with slight adaptations.
We abbreviate $\gmc:=\ssG^c(d_Z,d_Y,d_g,\beta,K)$ and $\rgt := \mathcal{R}(L_g,\mathbf{p}_g,s_g)$, $\rft = \mathcal{R}(L_f,\mathbf{p}_f,s_f)$ as before. Recall from \reff{wgan_conditional} that
\[
    \hat g_{n}^c := \argmin_{g\in \mathcal{R}_G}\hat W_{1,n}^c(g).
\]
\begin{prop}[cWGAN: Basic inequality]\label{basic_inequality_cwgan}
    It holds that
    \[
        \wnc(\hat g_n^c) - \inf_{g\in\gmc} \wnc(g) \le \sqrt{d}\cdot A^c_n + 2\cdot E^c_n,
    \]
    where
    \begin{eqnarray}
        A^c_n &:=& \sup_{g \in \gmc}\inf_{\tilde g\in \rg}\|g - \tilde g\|_{\infty},\label{cwgan_approx_error}\\
        E^c_n &:=& \sup_{f\in \rf} |(\hat\IP_n^{X,Y} - \IP^{X,Y}) f|\nonumber\\
        &&\quad\quad +  \sup_{g \in \rgt,f\in \rft} \Big|\frac{1}{n}\sum_{i=1}^{n}\{f(g(Z_i,Y_i),Y_i) - \IE f(g(Z_1,Y_1),Y_1)\}\Big|.\nonumber\\
        &&\label{cwgan_est_error}
    \end{eqnarray}
\end{prop}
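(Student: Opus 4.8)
The plan is to transcribe the proof of Proposition~\ref{basic_inequality_wgan} almost verbatim, replacing the unconditional objects $W_{1,n},\hat W_{1,n},\hat g_n,\gm$ by their conditional analogues $\wnc,\hat W_{1,n}^c,\hat g_n^c,\gmc$. First I would split, for any $g\in\gmc$,
\[
    \wnc(\hat g_n^c)-\inf_{g\in\gmc}\wnc(g)\le e_n^c+a_n^c,
\]
where $e_n^c:=\wnc(\hat g_n^c)-\inf_{g\in\rgt}\wnc(g)$ is the estimation error and $a_n^c:=\inf_{g\in\rgt}\wnc(g)-\inf_{g\in\gmc}\wnc(g)$ the approximation error, and then bound the first by $2E_n^c$ and the second by $\sqrt d\,A_n^c$.

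For the approximation error, I would first note (as in \reff{basic_inequality_eq1}) that $a_n^c\le\sup_{g\in\gmc}\inf_{\tilde g\in\rg}|\wnc(g)-\wnc(\tilde g)|$. The key observation is that every critic $f$ appearing in the supremum defining $\wnc$ is $1$-Lipschitz with respect to the Euclidean norm on $\IR^{d+d_Y}$, and that the conditioning coordinate $Y$ is identical in both arguments $f(g(Z,Y),Y)$ and $f(\tilde g(Z,Y),Y)$; hence
\[
    |\wnc(g)-\wnc(\tilde g)|\le\sup_{f\in\rft,\,\|f\|_L\le1}\IE\big|f(g(Z,Y),Y)-f(\tilde g(Z,Y),Y)\big|\le\IE\big|g(Z,Y)-\tilde g(Z,Y)\big|\le\sqrt d\,\|g-\tilde g\|_\infty,
\]
where the last inequality converts the component-wise sup norm $\|\cdot\|_\infty$ into the Euclidean norm on $\IR^d$. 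Taking the infimum over $\tilde g\in\rg$ and the supremum over $g\in\gmc$ yields $a_n^c\le\sqrt d\,A_n^c$ with $A_n^c$ as in \reff{cwgan_approx_error}.

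For the estimation error I would fix $\varepsilon>0$, pick $g^{*}\in\rgt$ with $\wnc(g^{*})\le\inf_{g\in\rgt}\wnc(g)+\varepsilon$, so $e_n^c\le\wnc(\hat g_n^c)-\wnc(g^{*})+\varepsilon$, and then insert and subtract the empirical objective. Using that $\hat g_n^c$ minimizes $\hat W_{1,n}^c$ over $\rgt$, so $\hat W_{1,n}^c(\hat g_n^c)-\hat W_{1,n}^c(g^{*})\le0$, one gets
\[
    \wnc(\hat g_n^c)-\wnc(g^{*})\le2\sup_{g\in\rgt}\big|\hat W_{1,n}^c(g)-\wnc(g)\big|.
\]
Letting $\varepsilon\downarrow0$ and bounding the difference of the two suprema appearing in $\hat W_{1,n}^c(g)-\wnc(g)$ by the supremum of the pointwise difference, this quantity is at most
\[
    \sup_{f\in\rft}\big|(\hat\IP_n^{X,Y}-\IP^{X,Y})f\big|+\sup_{g\in\rgt,\,f\in\rft}\Big|\tfrac{1}{n}\sum_{i=1}^{n}f(g(Z_i,Y_i),Y_i)-\IE f(g(Z_1,Y_1),Y_1)\Big|=E_n^c,
\]
which is exactly \reff{cwgan_est_error}. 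Combining this with the approximation bound gives the claim.

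The argument is a near-transcription of the unconditional case, so I do not expect a genuine obstacle; the two points that require care are: (i) retaining the shared $Y$-coordinate so that it cancels inside $|f(g(Z,Y),Y)-f(\tilde g(Z,Y),Y)|$, leaving only the $\IR^d$-difference of generator outputs (this is what keeps the $\sqrt d$ factor and no dependence on $d_Y$ in $A_n^c$); and (ii) the fact that, in contrast to the unconditional $E_n$, the second empirical term cannot be rewritten as $(\hat\IP_{n\Epo}^Z-\IP^Z)(f\circ g)$ with an epoch factor $\Epo$, because the $Y_i$ are tied to the real sample — it must be left as a centered empirical average over the stationary sequence $(Z_i,Y_i)_{i=1}^{n}$, which is why $E_n^c$ is stated in the un-contracted form and will later be controlled by combining the $\beta$-mixing of $(X_i,Y_i)$ with the independence of the $Z_i$.
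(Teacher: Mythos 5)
Your proposal is correct and follows essentially the same route as the paper, which simply refers back to the proof of Proposition~\ref{basic_inequality_wgan} and verifies that the key Lipschitz step $|\wnc(g)-\wnc(\tilde g)|\le\sqrt d\,\|g-\tilde g\|_\infty$ still holds because the shared $Y$-coordinate cancels inside the critic. You have filled in the details the paper leaves implicit, including the correct observation that the second empirical term in $E_n^c$ must be left as a centered average over $(Z_i,Y_i)$ rather than contracted as in the unconditional case.
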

\begin{proof}[Proof of Proposition \ref{basic_inequality_cwgan}]
Proceed as in the proof of \cref{basic_inequality_wgan}. Note that \begin{eqnarray*}
    &&|W^c_{1,n}(g) - W^c_{1,n}(\tilde g)|\\
    &\le& \sup_{f \in \rft, \|f\|_L\le 1} \IE|f(g(Z,Y),Y) - f(\tilde g(Z,Y),Y)|\\
    &\le& \sqrt{d}\|g - \tilde g\|_{\infty},
\end{eqnarray*}
stays the same.
 \end{proof}
 
\begin{lem}\label{lem:c_approx} Let $\beta \ge 1$, $d_g \in\IN$, $\tilde \beta\geq \frac{D}{d_g} \beta$.
    Suppose that for $N$ large enough,
    \begin{itemize}
        \item $L_g\ge \log_2(n) \left(2 \log_2(4d_g\vee 4\beta)+ \log_2(4D\vee 4\tilde\beta) \right),$
        \item $\min_{i=1,...,L_g}p_i\gtrsim N,$
        \item $s_g\gtrsim N \log_2(n)$,
    \end{itemize}
    then $A^c_n$ from \reff{cwgan_approx_error} satisfies
    \begin{equation}
        A^c_n \lesssim \frac{N}{n} + N^{-\beta/d_g},\label{lem:c_approx_eq1}
    \end{equation}
    Here, the bounding constants only depend on $\tilde \beta,\beta,d_g,D,d$ and $K$.
\end{lem}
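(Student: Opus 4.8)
The plan is to follow the proof of Lemma \ref{lem:approx}, now exploiting the three-stage decomposition $g=g_{dec}\circ g_{enc,1}\circ g_{enc,0}$ of an arbitrary $g\in\mathcal{G}^c(d_Z,d_Y,D,d_g,\beta,K)$ and approximating each stage separately by a ReLU network. Fix the resolution $k=\lceil\log_2(n)\rceil$ and the width parameter $N$. Every component of $g_{enc,0}$ and of $g_{dec}$ depends on only $d_g$ coordinates and lies in $C^{\beta}([0,1]^{d_g},K)$, while every component of $g_{enc,1}$ lies in $C^{\tilde\beta}([0,1]^{D},K)$; applying Theorem \ref{reluapprox} componentwise (and stacking the finitely many scalar networks of each stage in parallel) yields networks $\tilde g_{enc,0},\tilde g_{enc,1},\tilde g_{dec}$ with
\[
\|g_{enc,0}-\tilde g_{enc,0}\|_{\infty},\ \|g_{dec}-\tilde g_{dec}\|_{\infty}\ \lesssim\ N2^{-k}+N^{-\beta/d_g},\qquad \|g_{enc,1}-\tilde g_{enc,1}\|_{\infty}\ \lesssim\ N2^{-k}+N^{-\tilde\beta/D}.
\]
The assumption $\tilde\beta\ge\frac{D}{d_g}\beta$ enters precisely here, giving $N^{-\tilde\beta/D}\le N^{-\beta/d_g}$, while $k=\lceil\log_2 n\rceil$ gives $N2^{-k}\lesssim N/n$, so all three stagewise errors are $\lesssim N/n+N^{-\beta/d_g}$. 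Putting the three networks in series gives a single ReLU network $\tilde g$ whose depth is the sum of the three depths, of order $k\big(2\log_2(4d_g\vee4\beta)+\log_2(4D\vee4\tilde\beta)\big)$, whose minimal width is $\gtrsim N$ (dimensional factors absorbed into the implied constants), and whose number of nonzero weights is $\lesssim Nk=N\log_2 n$. Hence, after padding to the prescribed depth with identity layers if necessary, $\tilde g\in\mathcal{R}(L_g,\mathbf{p}_g,s_g)$ under hypotheses (i)--(iii).

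For the error estimate I would telescope along the three stages. Inserting the hybrid maps $g_{dec}\circ g_{enc,1}\circ\tilde g_{enc,0}$ and $g_{dec}\circ\tilde g_{enc,1}\circ\tilde g_{enc,0}$ between $g$ and $\tilde g$ and using the triangle inequality, $\|g-\tilde g\|_{\infty}$ splits into three terms, each bounded by (the Lipschitz constant of the \emph{true} outer map) times (the approximation error of the corresponding stage). Since $\beta,\tilde\beta\ge1$, each scalar component of $g_{enc,1}$ and of $g_{dec}$ is Lipschitz on its unit cube with constant $\le K$ (the $C^{\beta}$-norm bounds the first-order partials), so $g_{dec}$ and $g_{dec}\circ g_{enc,1}$ have Lipschitz constants depending only on $d,d_g,D,K$; multiplying by the stagewise errors $\lesssim N/n+N^{-\beta/d_g}$ and taking $\sup_{g\in\mathcal{G}^c}$ gives \reff{lem:c_approx_eq1}. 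Two bookkeeping points are handled exactly as the $\tilde g^{\circ}$ rescaling in the proof of Lemma \ref{lem:approx}: first, the intermediate network values $\tilde g_{enc,0}(u)$ and $\tilde g_{enc,1}(\tilde g_{enc,0}(u))$ must lie in the unit cubes $[0,1]^{D}$ and $[0,1]^{d_g}$ on which the next stage and its approximation bound are defined, which is arranged by precomposing each stage with the one-layer, $1$-Lipschitz componentwise clip $x\mapsto\sigma(x)-\sigma(x-1)$, costing only $O(1)$ additional depth and width; second, the true constituent maps are taken to send the relevant unit cubes into one another (the normalization implicit in Definition \ref{definition_c_autoencoder}), so that the Lipschitz constants used above are indeed the applicable ones.

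The step I expect to be the main obstacle is this careful propagation of errors and Lipschitz constants through the composition together with the domain/clipping bookkeeping: one must verify that the clipped intermediate values land in the correct cubes, that the Lipschitz factors entering the telescoped bound are those of the smooth outer maps (so that no factor $\mathrm{Lip}(N,k)$, which grows with $N$, appears), and that all dimensional constants $d,D,d_g,\beta,\tilde\beta,K$ aggregate into the claimed bounding constants. Everything else — the choice $k=\lceil\log_2 n\rceil$, the inequality $N^{-\tilde\beta/D}\le N^{-\beta/d_g}$ coming from $\tilde\beta\ge\frac{D}{d_g}\beta$, the depth/width/sparsity count matching (i)--(iii), and the passage to the supremum over $\mathcal{G}^c$ — is routine once Theorem \ref{reluapprox} and the unconditional Lemma \ref{lem:approx} are in hand.
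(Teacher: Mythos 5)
Your proposal is correct and follows essentially the same route as the paper: decompose $g$ into its three stages, approximate each componentwise via Theorem \ref{reluapprox} with $k=\lceil\log_2 n\rceil$, clip intermediate outputs into the unit cube, stack and compose, and count depth/width/sparsity. The only difference in emphasis is that you explicitly unpack the telescoping/Lipschitz-propagation argument for the composition error (correctly insisting that only the Lipschitz constants of the \emph{true} smooth outer maps $g_{dec}$ and $g_{enc,1}$ enter, not the $N$-dependent $\mathrm{Lip}(N,k)$ of the approximating networks), whereas the paper delegates exactly this step to a citation of Lemma 3 in \cite{sh}; your clipping via $x\mapsto\sigma(x)-\sigma(x-1)$ is the same device the paper writes as $1-(1-\tilde g_{ij})_+$, and you also flag the final $\tilde g^{\circ}$ rescaling to enforce $\|\cdot\|_\infty\le F$. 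So the ``main obstacle'' you identified is real but is resolved exactly as you suggest; there is no gap.
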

 \begin{proof}[Proof of Lemma \ref{lem:c_approx}] The proof basically follows from Theorem \ref{reluapprox} with $k = \lceil \log_2(n)\rceil$ along the same lines as in the proof of Theorem 1 in \cite{sh}. Let $g\in \ssG^c = \ssG^c(d_Z,d_Y,d_g,\beta,K)$. For ease of notation, let
\[
    \bm{\beta} = (\beta_0,\beta_1,\beta_2) := (\beta,\tilde \beta,\beta),\quad\quad \mathbf{t} = (t_0,t_1,t_2) := (d_g,D,d_g)     
\]
and $\mathbf{d} = (d_0,d_1,d_2,d_3) := (d_Z+d_Y,D,d_g,d)$. We furthermore abbreviate $g_0 := g_{enc,0}$, $g_1 := g_{enc,1}$ and $g_2 := g_{dec}$.
 
First transform the component functions $g_0,g_1$ as in \cite[Proof of Theorem 1]{sh} to map to $[0,1]^{d_1}$ and $[0,1]^{d_2}$ respectively. Now by Theorem \ref{reluapprox}, for $i = 0,1,2$ we find functions
\begin{eqnarray*}
    \tilde g_{i} &\in& \relu(L_{i},(d_i,\mathbf{p}_{i},d_{i+1}),d_{i+1}s_{i}),
\end{eqnarray*}
where 
\begin{eqnarray*}
    L_{i} &=& 8 + (k+5)(1 + \log_2(t_i \vee \beta_i)), \\
    p_{i} &=& (d_i,6d_{i+1}(t_i+\lceil \beta_i\rceil)N,\dots,6d_{i+1}(t_i+\lceil \beta_i\rceil)N,d_{i+1}) \in \IR^{L_i+2},\\
    s_{i} &\le& 141(t_i+\beta_i+1)^{3+t_i}N(k+6)
    \end{eqnarray*}
such that 
\begin{eqnarray*}
    \|g_{i} - \tilde g_{i}\|_{\infty} &\le& (2K+1)(1+t_i^2+\beta_i^2)6^{t_i}N2^{-k} + K3^{\beta_i} N^{\frac{\beta_i}{t_i}}.
\end{eqnarray*}
Apply $1-(1-\tilde g_{ij})_+$ for $i\in\{0,1\}$ so that the network outputs lie in $[0,1]^{d_{i+1}}$. This does not increase the distance to $g_i$ and adds 4 non-zero parameters per output dimension and 2 layers. The composed network $\tilde g := \tilde g_{2} \circ \sigma(\tilde g_{1}) \circ \sigma(\tilde g_{0})$ satisfies
\begin{eqnarray*}
    \tilde g \in \relu(\bar L, \bar p, \bar s)
\end{eqnarray*}
with
 \begin{eqnarray*}
    \bar L &:=& \sum_{i=0}^{2}L_i + 6,\\
    \bar p &:=& (d_Z+d_Y,p_{0},\dots,p_0,D,p_{1},\dots,p_1,d_g,p_{2},\dots,p_2,d),\\
    \bar s &:=& \sum_{i=0}^{2}d_{i+1}(s_i + 4).
 \end{eqnarray*}
and, in analogy to \cite[Section 7.1, Lemma 3]{sh},
\begin{eqnarray} \label{approx_error_eq}
    \|\tilde g - g\|_{\infty} &\le& C \max_{i=0,1,2}\big\{\frac{N}{n} + N^{-\frac{\beta_i}{t_i}}\big\} = C\big\{\frac{N}{n} +  N^{-\frac{\beta}{d_g}}\big\}
\end{eqnarray}
for a constant $C$ that only depends on $\bm{\beta},\mathbf{d},K$. Up to now, $\tilde g$ may not satisfy $\|\tilde g\|_{\infty} \le F$. However, $\tilde g^{\circ} := (\frac{\|g\|_{\infty}}{\|\tilde g\|_{\infty}}\wedge 1)\tilde g$ still fulfills $\tilde g^{\circ} \in \relu(\bar L, \bar p, \bar s)$ and $\|\tilde g^{\circ}\|_{\infty} \le \|g\|_{\infty} \le K \le F$. Due to $\|\tilde g^{\circ} - g\|_{\infty} \le 2\|\tilde g - g\|_{\infty}$, \reff{approx_error_eq} still holds for $\tilde g^{\circ}$ with changed constants.
\end{proof}

We now provide an analogeous result for Lemma \ref{lem_upperbound_estimation_wgan} in the conditional case. If $Z_i$, $i\in\IZ$ is a sequence of independent random variables and independent of $(X_i,Y_i)$, $i\in\IZ$, then $\beta$-mixing of $(X_i,Y_i)$ implies $\beta$-mixing of $(Y_i,Z_i)$ with the same coefficients. The basic change is that the supremum in the second summand in $E_n^c$ from \reff{cwgan_est_error} runs over a different class of neural networks, namely
\[
\fg = \Big\{ h:[0,1]^{d_Z+d_Y}\rightarrow \R,\; (z,y)\mapsto f(g(z,y),y) \Big| \;g\in\rg,\; f\in\rf,\; \|f\|_L\le 1 \Big\}.
\]
By adding $d_Y$ neurons in each layer of $g$, we can mimic the function $(g(z,y),y)$. Thus
\[
    \fg\subseteq \tilde{\relu}^c=\relu(L_{comp},\mathbf{p}_{comp}, s_{comp},\infty),
\]
where
\begin{eqnarray*}
    L_{comp} &=& L_f+L_g+1,\\
    \mathbf{p}_{comp} &=& (d_z +d_Y,p_{g,1} + d_Y,\dots,p_{g,L_g} + d_Y,d+d_Y,p_{f,1},\dots,p_{f,L_f},1),\\
    s_{comp} &:=& s_g + s_f + (L_g+1)d_Y.
\end{eqnarray*}
As long as $s_g \ge d_Y L_g$, $p_{g,i} \ge d_Y$ ($i=1,...,L_g$), there exists a universal constant $c > 0$ such that
\[
    \gamma(L_{comp},\mathbf{p}_{comp},s_{comp}) \le c\cdot \gamma(L_f \vee L_g, \mathbf{p}_{f} \vee \mathbf{p}_g, s_f \vee s_g),
\]
where $x \vee y$ for vectors $x,y$ is meant component-wise. These remarks lead to the following result.

\begin{lem}[Upper bound on the estimation error]\label{lem_upperbound_estimation_wgan_conditional}
    Suppose that there exist constants $\kappa > 1, \alpha > 1$ such that for all $k\in\IN$, $\beta_{X,Y}(k) \le \kappa\cdot k^{-\alpha}$. Suppose that $s_g \ge d_Y L_g$, $p_{g,i} \ge d_Y$ ($i=1,...,L_g$) and $\gamma(L_g\vee L_f,\mathbf{p}_g \vee \mathbf{p}_f,s_g\vee s_f) \le n$. Then there exists some constant $C > 0$ only depending on characteristics of $(X_1,Y_1)$ and $F,\kappa,\alpha$ such that 
    \[
        \IE E_n^c \le C\cdot\Big[ \Big(\frac{\gamma(L_f,\mathbf{p}_f,s_f)}{n}\Big)^{1/2} + \Big(\frac{\gamma(L_g\vee L_f,\mathbf{p}_g \vee \mathbf{p}_f,s_g\vee s_f)}{n}\Big)^{1/2}\Big].
    \]
    Furthermore, with probability at least $1 - 4n^{-1} - 2(\frac{\log(n)}{n})^{\frac{\alpha-1}{2}}$,
    \[
        E_n \le C\cdot\Big[ \Big(\frac{\gamma(L_f,\mathbf{p}_f,s_f)}{n}\Big)^{1/2} + \Big(\frac{\gamma(L_g\vee L_f,\mathbf{p}_g \vee \mathbf{p}_f,s_g\vee s_f)}{n}\Big)^{1/2} + \Big(\frac{\log(n)}{n}\Big)^{1/2}\Big].
    \]
\end{lem}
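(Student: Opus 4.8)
The plan is to follow the proof of Lemma \ref{lem_upperbound_estimation_wgan} almost verbatim, the differences being that in the conditional case there are no additional latent draws (so every $n\Epo$ becomes $n$) and that the second summand of $E_n^c$ involves the composed class $\fg = \{(z,y)\mapsto f(g(z,y),y):\, g\in\rgt,\, f\in\rft,\,\|f\|_L\le 1\}$ instead of $\{f\circ g\}$. Concretely, $E_n^c$ from \reff{cwgan_est_error} is a sum of two empirical process suprema; I would bound each of them with Lemma \ref{lem:ent} and then combine. The triangle inequality applied to the two expectation bounds gives the bound on $\IE E_n^c$, and a union bound over the two exceptional events produced by Lemma \ref{lem:ent} gives the high-probability statement. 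Since each invocation of Lemma \ref{lem:ent} carries an exceptional probability at most $2n^{-1}+(\log(n)/n)^{(\alpha-1)/2}$, the two together give exactly $4n^{-1}+2(\log(n)/n)^{(\alpha-1)/2}$, and the two $(\log(n)/n)^{1/2}$ remainder terms are absorbed into the implicit constant.

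\textbf{The real-data term.} The supremum $\sup_{f\in\rf}|(\hat\IP_n^{X,Y}-\IP^{X,Y})f|$ is literally of the form treated in Lemma \ref{lem:ent}: one takes the $[0,1]^{d+d_Y}$-valued, strictly stationary, absolutely regular sequence $(X_i,Y_i)_i$ (for which $\beta_{X,Y}(k)\le\kappa k^{-\alpha}$ is in particular summable) in place of $(X_i)_i$, and the class $\relu(L_f,\mathbf{p}_f,s_f)$ in place of $\relu(L,\mathbf{p},s)$. The hypothesis $\gamma(L_f,\mathbf{p}_f,s_f)\le\gamma(L_g\vee L_f,\mathbf{p}_g\vee\mathbf{p}_f,s_g\vee s_f)\le n$ is satisfied, so Lemma \ref{lem:ent} yields the bound $\lesssim(\gamma(L_f,\mathbf{p}_f,s_f)/n)^{1/2}$ in expectation, and the same bound with an additional $(\log(n)/n)^{1/2}$ on the stated high-probability event.

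\textbf{The generated-data term.} For $\sup_{g\in\rgt,\,f\in\rft}|\frac1n\sum_{i=1}^n\{f(g(Z_i,Y_i),Y_i)-\IE f(g(Z_1,Y_1),Y_1)\}|$ I would use the embedding recorded just above the statement: by propagating the $d_Y$ coordinates of $y$ unchanged through the layers of $g$ and composing with $f$, every $h\in\fg$ is realized inside $\tilde\relu^c=\relu(L_{comp},\mathbf{p}_{comp},s_{comp},\infty)$, and $\|h\|_\infty\le F$ because $f\in\relu(L_f,\mathbf{p}_f,s_f,F)$; hence the supremum over $\fg$ is at most the supremum over $\tilde\relu^c$. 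The driving process is $(Y_i,Z_i)_i$, which, since $Z_i$ is i.i.d. and independent of $(X_i,Y_i)_i$, is strictly stationary and absolutely regular with the same coefficients as $(X_i,Y_i)_i$, so again $\beta_{Y,Z}(k)\le\kappa k^{-\alpha}$. Under the structural assumptions $s_g\ge d_Y L_g$ and $p_{g,i}\ge d_Y$ one has $\gamma(L_{comp},\mathbf{p}_{comp},s_{comp})\le c\,\gamma(L_g\vee L_f,\mathbf{p}_g\vee\mathbf{p}_f,s_g\vee s_f)\le cn$ for a universal constant $c$; applying Lemma \ref{lem:ent} to this class and this sequence (the constant $c$ only rescales the constant in the conclusion, since $t\mapsto t^{\alpha/(\alpha+1)}$ is dominated by $t\mapsto t^{1/2}$ up to a constant on $[0,c]$) bounds this term by a constant times $(\gamma(L_g\vee L_f,\mathbf{p}_g\vee\mathbf{p}_f,s_g\vee s_f)/n)^{1/2}$ in expectation, plus $(\log(n)/n)^{1/2}$ on the corresponding exceptional event.

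\textbf{Main obstacle.} There is no deep difficulty beyond what has already been prepared in the remarks preceding the statement: the realization of $\fg$ inside the sparse ReLU class $\tilde\relu^c$, the comparison of $\gamma(L_{comp},\mathbf{p}_{comp},s_{comp})$ with $\gamma(L_g\vee L_f,\mathbf{p}_g\vee\mathbf{p}_f,s_g\vee s_f)$, and the transfer of the $\beta$-mixing property from $(X_i,Y_i)_i$ to $(Y_i,Z_i)_i$. The point requiring care is purely bookkeeping: keeping the various constants and the two exceptional probabilities consistent, so that the final high-probability guarantee reads exactly $1-4n^{-1}-2(\log(n)/n)^{(\alpha-1)/2}$, and so that the factor $c$ in $\gamma(L_{comp},\cdot)\le c\,\gamma(L_g\vee L_f,\cdot)$ is absorbed into the bounding constant rather than into the rate.
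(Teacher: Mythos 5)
Your proposal matches the paper's intended argument exactly: the remarks the paper places immediately before the lemma (the embedding $\fg\subseteq\tilde{\relu}^c$, the comparison $\gamma(L_{comp},\mathbf{p}_{comp},s_{comp})\le c\,\gamma(L_g\vee L_f,\mathbf{p}_g\vee\mathbf{p}_f,s_g\vee s_f)$ under $s_g\ge d_Y L_g$ and $p_{g,i}\ge d_Y$, and the inheritance of the $\beta$-mixing rate by $(Y_i,Z_i)$) are precisely the ingredients the paper expects the reader to feed into two applications of Lemma \ref{lem:ent}, once per summand of $E_n^c$, followed by a union bound. Your observation that the extra factor $c\ge 1$ in the entropy comparison only rescales constants (because $(\gamma/n)^{\alpha/(\alpha+1)}\lesssim(\gamma/n)^{1/2}$ whenever $\gamma/n\le c$) and your bookkeeping of the two exceptional probabilities to reach $4n^{-1}+2(\log(n)/n)^{(\alpha-1)/2}$ are both correct and consistent with what the paper does implicitly.
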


\begin{proof}[Proof of Theorem \ref{theorem_wgan_conditional_excessbayes}]
In order to bound the estimation error $E_n^c$ proceed as in the proof of Theorem \ref{theorem_wgan_excessbayes}. By Proposition \ref{basic_inequality_cwgan},
    \[
        R_n^c(\hat g_n^c) \le \sqrt{d}\cdot A_n^c + 2\cdot E_n^c.
    \]
    Under the given assumptions on $L_f, \mathbf{p}_f, s_f$, we conclude from \reff{definition_groesse_netzwerk} that 
    \begin{eqnarray*}
        &&\gamma(L_f,\mathbf{p}_f,s_f)\\
        &\le& (s_f+1)\log\big(2^{2L_f+6}(L_f+1)p_0^{2}p_{L+1}^2s_f^{2L_f}\big) \lesssim s_f L_f \log(s_f L_f).
    \end{eqnarray*}

    Under the given assumptions on $L_g, \mathbf{p}_g,s_g$, we conclude by Lemma \ref{lem:c_approx} for $N$ large enough that
    \[
        A_n \lesssim \frac{N}{n} + N^{-\beta/d_g}.
    \]
    Thus by Lemma \ref{lem_upperbound_estimation_wgan_conditional},
    \begin{eqnarray*}
        \IE R_n^c(\hat g_n^c) &\lesssim& \Big(\frac{s_f L_f \log(s_f L_f)}{n}\Big)^{1/2} + N^{-\beta/d_g}\\
        &&\quad\quad + \Big(\frac{(s_f\vee s_g) (L_f\vee L_g) \log((s_f \vee s_g) (L_f \vee L_g))}{n}\Big)^{1/2}.
    \end{eqnarray*}
    
    Choose $N = \lceil C_1 n\phi_n\rceil$, where $C_1$ is large enough such that the conditions of Lemma \ref{lem:c_approx} are met. Then
    \[
         \IE R_n^c(\hat g_n^c) \lesssim \Big(\frac{s_f L_f \log(s_f L_f)}{n}\Big)^{1/2} + \phi_n^{1/2}\log(n)^{3/2}.
    \]
\end{proof}

\section{Entropy bound and large deviation bounds for absolutely regular sequences}\label{sec:entropybound}

\subsection{Entropy bounds}
In this section, we develop the entropy bound under absolutely regular $\beta$-mixing for deep sparse regularized ReLU networks $\relu=\relu(L,\mathbf{p},s,F)$ in dependence on $L,\mathbf{p}$ and $s$. The basic theoretical ingredients consist of the empirical process theory invented in \cite{betanorm} and \cite{dl}. Recall the introduction of bracketing numbers and mixing coefficients from Section \ref{sec:apprerror}.




\begin{lem}\label{thm:beta}
Let $\ssF \subset \{f:\IR^r \to \IR \text{ measurable}\}$ be any class of functions such that $\sup_{f\in \ssF}\|f\|_{\infty} \le F$. Let
\[
    H := 1\vee \log N_{[]}(2BF, \ssF, \|\cdot\|_{\infty}).
\]
Suppose that there exist constants $\kappa > 1, \alpha > 1$ such that for all $k\in\IN$, $\beta_X(k) \le \kappa\cdot k^{-\alpha}$ and $H \le n$.

Then there exist constants $K_1,K_2 > 0$ only depending on characteristics of $(X_i)_{i\in\IZ}$ such that
\begin{eqnarray}
    &&\IE^* \sup_{f\in\ssF} |(\hat \IP^X_n-\IP^X)f|\nonumber\\
    &\le& K_1 \cdot n^{-1/2}\cdot J_{[]}(F,\ssF,\|\cdot\|_\infty) + K_2\, F \cdot \Big( \frac{H}{n}\Big)^{\frac{\alpha}{\alpha+1}} =: r_n,\label{thm:beta_eq0}
\end{eqnarray}
where $\IE^{*}$ denotes the outer expectation.
\end{lem}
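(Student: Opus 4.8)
The plan is to reduce the absolutely regular empirical process to an independent‑blocks version by coupling, apply a classical bracketing maximal inequality to the latter, and then tune the block length against the polynomial mixing rate. Fix an integer block length $q\in\{1,\dots,n\}$ to be chosen at the end, split $\{1,\dots,n\}$ into $m\asymp n/q$ consecutive blocks, and write $S_j(f):=\sum_{i\in\text{block }j}\big(f(X_i)-\IP^Xf\big)$, so that $(\hat\IP_n^X-\IP^X)f=\tfrac1n\sum_{j=1}^m S_j(f)$ with $|S_j(f)|\le 2qF$. By the blocking‑and‑coupling device for absolutely regular sequences (Berbee's lemma, in the form used in \cite{betanorm}; one inserts short separator blocks so that the coupling is genuinely at distance $q$), on an enlarged probability space there are independent $\tilde S_j\stackrel{d}{=}S_j$ with $\sum_j\IP(S_j\neq\tilde S_j)\lesssim m\,\beta_X(q)$. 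Since a single mismatch costs at most $4qF/n$ after division by $n$, and $mq\le n$, the coupling error is
\[
    \IE^*\Big|\sup_{f\in\ssF}\big|\tfrac1n\textstyle\sum_j S_j(f)\big|-\sup_{f\in\ssF}\big|\tfrac1n\textstyle\sum_j\tilde S_j(f)\big|\Big|\ \lesssim\ F\,\beta_X(q)\ \le\ F\kappa\,q^{-\alpha}.
\]

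For the coupled process I would invoke the bracketing maximal inequality for sums of independent bounded summands, chaining along a sequence of bracketing nets as established in \cite{betanorm} and \cite{dl}. The decisive point is that $\mathrm{Var}\big(\sum_{i\in\text{block }1}f(X_i)\big)\lesssim q\,\|f\|_{2,\beta}^2$ — this is exactly why the chaining integral that appears is with respect to the norm $\|\cdot\|_{2,\beta}$ — so after dividing by $n$ and using $mq\le n$ the factor $q$ coming from the variance cancels. The only residual effect of the dependence is the boundedness $|\tilde S_j|\le 2qF$ of the blocks, which produces a Bennett/Bernstein‑type correction proportional to $q/n$ and the bracketing entropy at the relevant (envelope) scale. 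This yields a bound of the form
\[
    \IE^*\sup_{f\in\ssF}\big|\tfrac1n\textstyle\sum_j\tilde S_j(f)\big|\ \lesssim\ n^{-1/2}J_{[]}\big(B^{1/2}F,\ssF,\|\cdot\|_{2,\beta}\big)\ +\ \frac{qF}{n}\,\big(1\vee\log N_{[]}(2BF,\ssF,\|\cdot\|_{2,\beta})\big),
\]
where measurability issues are handled, as usual, by replacing $\sup_{f\in\ssF}$ by the supremum over the (countable) brackets.

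It remains to pass from $\|\cdot\|_{2,\beta}$ to $\|\cdot\|_\infty$ and to optimise $q$. By \reff{standardbound_betnorm}, an $\varepsilon$‑bracket for $\|\cdot\|_\infty$ is a $B^{1/2}\varepsilon$‑bracket for $\|\cdot\|_{2,\beta}$, so $N_{[]}(\delta,\ssF,\|\cdot\|_{2,\beta})\le N_{[]}(B^{-1/2}\delta,\ssF,\|\cdot\|_\infty)$; substituting, using $B\ge1$ (note $B\ge\beta_X(0)=1$) and that $\ssF$ has envelope $F$ so that only scales $\lesssim F$ contribute, one gets $J_{[]}(B^{1/2}F,\ssF,\|\cdot\|_{2,\beta})\lesssim J_{[]}(F,\ssF,\|\cdot\|_\infty)$ and $\log N_{[]}(2BF,\ssF,\|\cdot\|_{2,\beta})\le H$, with implicit constants depending on $B$. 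Combining the three displays gives
\[
    \IE^*\sup_{f\in\ssF}\big|(\hat\IP_n^X-\IP^X)f\big|\ \lesssim\ n^{-1/2}J_{[]}(F,\ssF,\|\cdot\|_\infty)\ +\ \frac{qFH}{n}\ +\ F\kappa\,q^{-\alpha}.
\]
Finally, choosing $q=\big\lceil(n/H)^{1/(\alpha+1)}\big\rceil$ — admissible since $1\le H\le n$ forces $1\le q\le n$ — balances the last two terms, as $\tfrac{qH}{n}\asymp q^{-\alpha}\asymp(H/n)^{\alpha/(\alpha+1)}$, and produces $r_n$ with $K_1$ depending only on characteristics of $(X_i)$ and $B$ and $K_2$ in addition on $\kappa,\alpha$.

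The main obstacle is the independent‑blocks step: one must quote or reassemble the bracketing maximal inequality so that (i) the chaining integral comes out purely in the $\|\cdot\|_{2,\beta}$ metric with no leftover $q$‑factor (i.e. the variance inflation is fully absorbed into that norm), and (ii) the block‑boundedness correction lands at exactly order $qFH/n$, with the entropy evaluated at the fixed scale $\asymp BF$ appearing in the statement rather than a finer chaining quantity. Arranging the separator blocks so that the coupling error is genuinely $O(\beta_X(q))$ and not $O(\beta_X(1))$ is the technical heart of this; by contrast, the coupling‑error accounting, the $\|\cdot\|_{2,\beta}\to\|\cdot\|_\infty$ comparison via \reff{standardbound_betnorm}, and the one‑variable optimisation over $q$ are routine.
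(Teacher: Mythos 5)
Your high-level strategy — coupling to independent blocks, then a bracketing maximal inequality, then optimizing the block length against the polynomial mixing rate — is exactly the machinery underneath the paper's proof, but the paper takes a deliberate shortcut: it \emph{cites} the blocking-plus-chaining inequality directly from \cite{dl} (Remark 3.7 and the procedure in Section 4.3) in the form
\[
\IE^*\sup_{f\in\ssF}|(\hat\IP_n^X-\IP^X)f|\le K'\,n^{-1/2}J_{[]}(\delta,\ssF,\|\cdot\|_{2,\beta})+2F\,\Ii_{2F>M(n,\delta)}+2R(n,\delta),
\]
with $R(n,\delta)\asymp F\,H(\delta)\,q^*(H/n)/n$ and $q^*(x)=\min\{q:\beta_X(q)\le qx\}$, and then only tunes the scale $\delta=B^{1/2}F$ and estimates $q^*$ via \reff{thm:beta_eq10}. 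The step you flag as ``the technical heart'' — establishing that the chaining integral comes out purely in $\|\cdot\|_{2,\beta}$ with no leftover $q$-factor, and that the block-boundedness correction lands at order $qFH/n$ — is precisely the content of that cited result, so you should quote it rather than try to reassemble it. As a sketch your gap is real (you assert the independent-blocks maximal inequality with the claimed two-term shape but do not prove it), but it is a gap that is known to close, since it is \cite{dl}'s theorem.

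Two smaller remarks. First, the \cite{dl} bound carries an additional indicator term $2F\,\Ii_{2F>M(n,\delta)}$ which your sketch omits; the paper disposes of it by raising to the power $p=2\alpha/(\alpha-1)$ so that it is also $\lesssim F(H/n)^{\alpha/(\alpha+1)}$, so this does not change the final rate but it is an ingredient you would need to account for if you used the DL statement verbatim. Second, your optimization $q=\lceil(n/H)^{1/(\alpha+1)}\rceil$ reproduces exactly the effect of DL's built-in choice $q^*(H/n)\asymp\kappa^{1/(\alpha+1)}(n/H)^{1/(\alpha+1)}$ (cf. \reff{thm:beta_eq10}), and your $\|\cdot\|_{2,\beta}\to\|\cdot\|_\infty$ comparison via \reff{standardbound_betnorm} and your coupling-error accounting $F\beta_X(q)$ are both correct and match the paper. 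So: correct route, same route, but replace the ``reassemble'' ambition in your step 3 with a citation to \cite{dl} and the argument is complete.
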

\begin{proof}
Let $\delta>0$ arbitrary. From \cite{dl} (Remark 3.7 and the procedure in section 4.3 therein) yield that for any class $\ssF$ with $\sup_{f\in \ssF}\|f\|_{2,\beta} \le \delta$, $\sup_{f\in \ssF}\|f\|_{\infty} \le F$, we have with some constant $K' > 0$ only depending on characteristics of $(X_i)_{i\in\IZ}$ that
\begin{equation}
    \IE^* \sup_{f\in\ssF} |(\hat \IP^X_n-\IP^X)f|\le K' \cdot n^{-1/2}\cdot J_{[]}(\delta,\ssF,\|\cdot\|_{2,\beta}) + 2F \Ii_{2F>M(n,\delta)} + 2\, R(n,\delta),\label{thm:beta_eq1}
\end{equation}
where with $q^{*}(x):= \min\{q\in\N:\; \beta_X(q)\le q x \}$ and some universal constant $c > 0$,
\begin{eqnarray*}
    R(n,\delta)&:=& 2c F \frac{H(\delta)\cdot  q^{*}\Big(\frac{H}{n}\Big)}{n},\\
    M(n,\delta) &:=& \frac{16B^{1/2}\delta}{q^{*}( H(\delta)/n)} \sqrt{\frac{n}{H(\delta)}},\\
    H(\delta) &:=& 1 \vee \log N_{[]}(2B^{1/2}\delta, \ssF, \|\cdot\|_{2,\beta}).
\end{eqnarray*}
By \reff{standardbound_betnorm}, choosing $\delta = B^{1/2} \cdot F$ yields $H(\delta)\le H$ and $J_{[]}(\delta, \ssF, \|\cdot\|_{2,\beta}) \le B^{1/2} J_{[]}(F,\ssF,\|\cdot\|_{\infty})$.

For $x \le 1$, we have
\begin{align}
    q^{*}(x)=\min\{ q\in\N: \, \beta(q)\le q x \} &\le \min\{ q\in\N: \kappa q^{-\alpha} \le qx \}\nonumber\\
    &= \min\{ q\in\N: \kappa x^{-1} \le q^{\alpha+1} \} = \lceil \kappa^{\frac{1}{\alpha+1}} x^{-\frac{1}{\alpha+1}}\rceil\nonumber\\
    &\le 2\kappa^{\frac{1}{\alpha+1}}x^{-\frac{1}{\alpha+1}}.\label{thm:beta_eq10}
\end{align}
Due to $H \le n$, this implies
\begin{equation}
R(n,\delta) \le 2cF\cdot \frac{H}{n} \cdot q^{*}\Big(\frac{H}{n} \Big)\le  4cF\kappa^{\frac{1}{\alpha+1}}\Big(\frac{H}{n}\Big)^{\frac{\alpha}{\alpha+1}}.\label{thm:beta_eq2}
\end{equation}
In the same way we get that
\[
 M(n,\delta) = 16BF\Big[\sqrt{\frac{H}{n}} \cdot q^{*}\Big(\frac{H}{n} \Big)\Big]^{-1}\ge 8BF\delta\kappa^{-\frac{1}{\alpha+1}} \Big( \frac{H}{n}\Big)^{\frac{1}{\alpha+1}-\frac{1}{2}}.
\]
For any $p > 0$, we have
\[
    2F \Ii_{2F>M(n,\delta)} \le (2F)^{1+p}M(n,\delta)^{-p} \le (2F)^{1+p}\cdot (8BF)^{-p}\kappa^{\frac{p}{\alpha+1}}\Big(\frac{H}{n}\Big)^{-\frac{p(\alpha-1)}{2(\alpha+1)}}.
\]
Choosing $p = \frac{2\alpha}{\alpha-1}$ yields
\begin{equation}
    2F \Ii_{2F>M(n,\delta)} \le (2F)^{1+p}\cdot (8BF)^{-p}\kappa^{\frac{p}{\alpha+1}}\Big(\frac{H}{n}\Big)^{-\frac{\alpha}{\alpha+1}}.\label{thm:beta_eq3}
\end{equation}
Insertion of \reff{thm:beta_eq2} and \reff{thm:beta_eq3} into \reff{thm:beta_eq1} yields the result.

\end{proof}

\subsection{Large deviation bounds}

The essential techniques we use to derive large deviations bounds under absolutely regular $\beta$-mixing are coupling (cf. \cite{coupling, dl}), a Talagrand-type concentration inequality by  \cite{klein2005} and a covariance bound by Rio \cite{rio_cov}. For completeness, we cite the results our derivations are based on.

\begin{lem}[Coupling lemma, \cite{dl}] \label{lem:coupling}
Let $X$ and $Y$ be two random variables taking values in the Borel spaces $\mathcal{X}_1$ and $\mathcal{X}_2$ respectively, and let $U$ be a random variable with uniform distribution on $[0,1]$, independent of $(X,Y)$. There exists a random variable $Y^* = h(X,Y,U)$, where $h$ is a measurable function from $\mathcal{X}_1\times\mathcal{X}_2\times [0,1]$ into $\mathcal{X}_2$, such that:
\begin{enumerate}[(i)]
    \item $Y^*$ is independent of $X$ and has the same distribution as $Y$.
    \item $\P(Y\neq Y^*)=\beta(\sigma(X), \sigma(Y)).$
\end{enumerate}
\end{lem}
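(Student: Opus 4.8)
The plan is to recognize Lemma \ref{lem:coupling} as the coupling characterization of the absolute regularity coefficient --- a form of Berbee's lemma --- and to realize $Y^*$ by a maximal coupling carried out conditionally on $X$. First I would rewrite $\beta(\sigma(X),\sigma(Y))$ in a more usable form: since $\mathcal{X}_1$ and $\mathcal{X}_2$ are Borel, the partition definition of $\beta$ agrees with the total variation distance between the joint law $P_{X,Y}$ and the product $P_X\otimes P_Y$, and, using a regular conditional distribution $x\mapsto P_{Y\mid X=x}$ (which exists because $\mathcal{X}_2$ is Borel) and disintegrating, one gets $\beta(\sigma(X),\sigma(Y)) = \int \|P_{Y\mid X=x}-P_Y\|_{\mathrm{TV}}\,P_X(dx)$.

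Next, for $P_X$-a.e.\ $x$ I would introduce the overlap measure $\pi_x := P_{Y\mid X=x}\wedge P_Y$ and its total mass $m(x)\in[0,1]$, so that $\|P_{Y\mid X=x}-P_Y\|_{\mathrm{TV}}=1-m(x)$ and hence $\int m(x)\,P_X(dx) = 1-\beta(\sigma(X),\sigma(Y))$, together with the density $r_x := d\pi_x/dP_{Y\mid X=x}$ and, on $\{m(x)<1\}$, the normalized residual $\rho_x := (P_Y-\pi_x)/(1-m(x))$. The construction of $Y^*$ is then the standard accept/reject coupling: split $U$ measurably into two independent uniforms $U_1,U_2$ (still independent of $(X,Y)$), keep $Y^*=Y$ on $\{U_1\le r_X(Y)\}$, and on its complement resample $Y^* = \Psi_X(U_2)$ from $\rho_X$, where $\Psi_x$ transports the uniform law onto $\rho_x$. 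This exhibits $Y^*=h(X,Y,U)$ with $h$ measurable.

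The verification has two parts. For (i): conditionally on $X=x$, since $Y\sim P_{Y\mid X=x}$ and $U$ is independent of $(X,Y)$, the law of $Y^*$ equals $\pi_x+(1-m(x))\rho_x = P_Y$, which does not depend on $x$; hence $Y^*$ has the same law as $Y$ and is independent of $X$. For (ii): conditionally on $X=x$ one has $\{Y=Y^*\}\supseteq\{U_1\le r_x(Y)\}$, an event of probability $\int r_x(y)\,P_{Y\mid X=x}(dy)=m(x)$, whereas on $\{U_1>r_x(Y)\}$ the variable $Y$ lies a.s.\ in the region where $P_{Y\mid X=x}$ has strictly larger density than $P_Y$ --- a set of zero $\rho_x$-mass --- so there the resampled $Y^*$ differs from $Y$ almost surely; therefore $\P(Y=Y^*\mid X=x)=m(x)$, and integrating gives $\P(Y\ne Y^*)=\int(1-m(x))\,P_X(dx)=\beta(\sigma(X),\sigma(Y))$.

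I expect the main obstacle to be measure-theoretic bookkeeping rather than anything conceptual: one must ensure the regular conditional distribution exists and that $x\mapsto\pi_x$, $(x,y)\mapsto r_x(y)$, $x\mapsto\rho_x$ and the transports $\Psi_x$ can all be chosen \emph{jointly measurably} (this is exactly where the standard-Borel hypothesis on $\mathcal{X}_1,\mathcal{X}_2$ is used), and one must extract the extra randomness needed for the resampling step measurably out of the single uniform $U$. A second delicate point is getting the \emph{exact} identity $\P(Y\ne Y^*)=\beta$ instead of just an inequality, which relies on the observation above that $\rho_x$ places no mass at any point where the rejection event can leave $Y$. These details are carried out in \cite{dl} (see also \cite{coupling}).
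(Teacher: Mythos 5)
The paper does not actually prove Lemma~\ref{lem:coupling}: it is imported verbatim from \cite{dl} (the original source being Berbee's coupling lemma, cf.\ also \cite{coupling}). So there is no in-paper argument to compare against, and the relevant question is simply whether your reconstruction is sound. It is, and it follows the standard route: disintegrate $\beta(\sigma(X),\sigma(Y))$ into $\int (1-m(x))\,P_X(dx)$ via a regular conditional distribution, then realize $Y^*$ by the maximal (accept/reject) coupling of $P_{Y\mid X=x}$ and $P_Y$ using the overlap measure $\pi_x=P_{Y\mid X=x}\wedge P_Y$. The two points you single out as delicate are precisely the ones that matter. First, the joint measurability of $x\mapsto P_{Y\mid X=x}$, $(x,y)\mapsto r_x(y)$ and the transport maps $\Psi_x$, and the splitting of the single uniform $U$ into $(U_1,U_2)$, are exactly where the standard-Borel hypothesis is used; a fully detailed write-up would exhibit a jointly measurable version of the Radon--Nikodym derivative $r_x(y)=\min(1,\,dP_Y/dP_{Y\mid X=x}(y))$ with respect to a common dominating measure such as $P_Y+P_{Y\mid X=x}$. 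Second, your argument for the \emph{equality} (not merely $\le$) in (ii) is correct: on the rejection event $\{U_1>r_X(Y)\}$ one has $r_X(Y)<1$, so $Y$ falls in the set where $P_{Y\mid X=x}$ dominates $P_Y$, while the residual $\rho_x$ is concentrated on the complementary set where $P_Y$ dominates; the two sets are disjoint, hence the resampled value almost surely differs from $Y$. This is what upgrades the trivial coupling bound to the sharp identity $\P(Y\ne Y^*)=\beta$. The normalization also matches the paper's definition of $\beta$ (with its $\tfrac12$ in front of the sum over product partitions), since that equals $\int \sup_A|P_{Y\mid X=x}(A)-P_Y(A)|\,P_X(dx)=\int(1-m(x))\,P_X(dx)$. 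Overall the proposal is a faithful and essentially complete sketch of the cited proof.
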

\begin{thm}[Talagrand-type concentration inequality, \cite{klein2005}, Theorem 1.1]\label{bennett_bousquet}
Assume the $W_i, i\in \IN$ are independent random variables with values in $\IR^r$. Let $\ssF \subset \{f:\IR^{r} \to \IR \text{ meausurable}\}$ be a countable set of functions with $\IE f(W_1) = 0$, $\sup_{f\in \ssF}\| f(W_1)\|_2^2 < \infty$ and $\sup_{f\in \ssF}\|f\|_{\infty} \le 1$. Define
\[
Z=\sup_{f\in\ssF} \Big| \sum_{i=1}^{m} f(W_i)\Big|.
\]
Let $\sigma$ be a positive real number such that $\sigma^2\geq \sup_{f\in\ssF} \mathrm{Var}\|f(W_1)\|_2^2$. Then for all $x > 0$, it holds that
\[
\P\left( Z\geq \IE Z + \left( 2 x (m\sigma^2 + 2 \mathds{E} Z) \right)^{1/2} + \frac{x}{3}\right)\le \exp(-x).
\]
\end{thm}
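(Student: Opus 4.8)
The plan is to prove this by the \emph{entropy (Herbst) method}, viewing $Z$ as a function of the independent coordinates $W_1,\dots,W_m$, and following the argument of Bousquet as refined in \cite{klein2005}. Since $\mathcal{F}$ is countable with $\sup_{f\in\mathcal{F}}\|f\|_\infty\le1$, I would first pass to the symmetrized class $\mathcal{G}:=\mathcal{F}\cup(-\mathcal{F})$, so that $Z=\sup_{g\in\mathcal{G}}\sum_{i=1}^{m}g(W_i)$ while $\mathbb{E}g(W_1)=0$, $\|g\|_\infty\le1$ and $\mathrm{Var}(g(W_1))\le\sigma^2$ are preserved; countability also lets me assume, after a routine monotone approximation, that for each realization the supremum is attained at some $\hat g=\hat g(W_1,\dots,W_m)\in\mathcal{G}$.

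The core of the argument is tensorization of entropy combined with Massart's modified logarithmic Sobolev inequality. For each $i$ let $W_i'$ be an independent copy of $W_i$ and let $Z^{(i)}$ be $Z$ evaluated with $W_i$ replaced by $W_i'$. Feasibility of $\hat g$ (resp.\ of the maximiser $\hat g^{(i)}$ of $Z^{(i)}$) in the two expressions gives the one-sided bounded differences $Z-Z^{(i)}\le\hat g(W_i)-\hat g(W_i')$ and $Z-Z^{(i)}\ge-\bigl(\hat g^{(i)}(W_i)-\hat g^{(i)}(W_i')\bigr)$. The modified log-Sobolev inequality then yields, for $\lambda>0$,
\[
    \mathrm{Ent}\bigl(e^{\lambda Z}\bigr)\ \le\ \sum_{i=1}^{m}\mathbb{E}\Bigl[e^{\lambda Z}\,\tau\bigl(-\lambda\,(Z-Z^{(i)})_{-}\bigr)\Bigr],\qquad \tau(u):=e^{u}-u-1,
\]
and, taking conditional expectations over the $W_i'$ and using $\tau(-u)\le u^{2}/2$ for $u\ge0$, the right-hand side is controlled by $\lambda^{2}\,\mathbb{E}\bigl[e^{\lambda Z}V\bigr]$ with $V$ a conditional weak-variance term. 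Writing $G(\lambda):=\log\mathbb{E}\,e^{\lambda(Z-\mathbb{E}Z)}$, this gives a differential inequality of the form $\lambda G'(\lambda)-G(\lambda)\le(\cdots)$.

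The decisive quantitative step is to replace the random weak variance by the deterministic proxy $v:=m\sigma^{2}+2\,\mathbb{E}Z$. One splits
\[
    \mathbb{E}\Bigl[\sup_{g\in\mathcal{G}}\sum_{i=1}^{m}\bigl(g(W_i)-\mathbb{E}g(W_i)\bigr)^{2}\Bigr]\ \le\ m\sigma^{2}\ +\ \mathbb{E}\Bigl[\sup_{g\in\mathcal{G}}\Bigl|\sum_{i=1}^{m}\bigl\{(g(W_i)-\mathbb{E}g(W_i))^{2}-\mathbb{E}(g(W_1)-\mathbb{E}g(W_1))^{2}\bigr\}\Bigr|\Bigr],
\]
and controls the fluctuation term by symmetrization followed by the contraction principle applied to $x\mapsto x^{2}$ on $[-2,2]$ (legitimate since $|g-\mathbb{E}g|\le2$), which brings it back to a multiple of $\mathbb{E}Z$ with the sharp constant. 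Plugging $v$ into the entropy inequality and keeping track of constants produces the Bennett-type bound $G'(\lambda)\le\lambda v/(1-\lambda/3)$ on $0<\lambda<3$; integrating it and then optimizing the Chernoff bound $\mathbb{P}(Z-\mathbb{E}Z\ge t)\le\exp(-\lambda t+G(\lambda))$ over $\lambda$ (equivalently, Legendre-transforming the resulting log-Laplace function) yields exactly $\mathbb{P}\bigl(Z\ge\mathbb{E}Z+\sqrt{2x(m\sigma^{2}+2\mathbb{E}Z)}+x/3\bigr)\le e^{-x}$.

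The main obstacle is entirely in the \emph{constants}: a soft version of this inequality with unspecified absolute constants in place of the sharp $2$ and $\tfrac13$ follows routinely from Ledoux's entropy method, but the stated Bennett form requires (i) Massart's sharp modified log-Sobolev inequality rather than a crude bounded-differences estimate, (ii) the careful symmetrization/contraction bookkeeping that produces precisely the factor $2$ in the variance proxy, and (iii) a tight solution of the differential inequality rather than a lossy comparison. Measurability of $Z$ and attainment of the supremum are not obstacles here, which is exactly why $\mathcal{F}$ is assumed countable.
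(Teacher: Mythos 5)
This statement is not proved in the paper: it is imported verbatim from Klein and Rio \cite{klein2005} (Theorem 1.1) and used as an off-the-shelf tool in the proof of Lemma \ref{lem_talagrand_mixing}. There is therefore no ``paper's own proof'' to compare your attempt against, and the relevant question is only whether your sketch is internally sound.

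Your outline correctly identifies the general machinery behind Bousquet-type bounds (tensorization of entropy, Massart's modified log-Sobolev inequality, one-sided bounded differences via the attained maximizer, a differential inequality for the log-Laplace transform, and the reduction of the two-sided supremum to a one-sided one via $\mathcal{G}=\mathcal{F}\cup(-\mathcal{F})$). That much is a fair road map, and you are right that measurability and attainment are non-issues for a countable class. However, there is a genuine gap in the step you flag as decisive, namely the claim that ``symmetrization followed by the contraction principle applied to $x\mapsto x^2$'' produces the variance proxy $m\sigma^2 + 2\,\mathbb{E}Z$ ``with the sharp constant.'' Symmetrization and contraction on the quadratic fluctuation term do \emph{not} give the factor $2$: tracking the constants honestly (a factor $2$ from symmetrization, a factor $2$ from the Lipschitz constant of $x\mapsto x^2$ on $[-1,1]$ inside the contraction principle, and a further factor $2$ when desymmetrizing back to $\mathbb{E}Z$) produces a constant on the order of $8$, not $2$. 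The sharp Bennett constants in \cite{klein2005} (and already in Bousquet's 2002 refinement of Massart) do not come from a separate bound on $\mathbb{E}\sup_g\sum_i g(W_i)^2$ at all: the variance proxy $m\sigma^2 + 2\,\mathbb{E}Z$ enters directly through the structure of the entropy inequality and a self-bounding/coupling identity between $Z$ and the leave-one-out quantities, not through a symmetrization-plus-contraction detour. As written, your route would therefore prove a valid but weaker inequality with a larger constant in front of $\mathbb{E}Z$, and would not reproduce the exact statement cited here. Since the paper only invokes the result, this does not affect the paper, but it is a real flaw in the proposed proof of the constant.
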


The following lemma is a direct consequence of Corollary 1.4 and Remark 1.6 in \cite{rio_cov}.
\begin{lem}[Variance bound for $\beta$-mixing sequences]\label{beta_var_bound}
Let $(X_i)_{i\in\N}$ be a strictly stationary sequence of random variables with values in a Polish space $\mathcal{X}$. Let $q\in\N$. Then for any $f:\mathcal{X} \to \IR$ with $\|f\|_{2,\beta} < \infty$,
\[
    \Big\|\sum_{i=1}^q f(X_i)\Big\|_2^2 \le 4 q  \|f\|^2_{2,\beta}.
\]
\end{lem}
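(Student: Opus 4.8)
The plan is to obtain the inequality as an essentially immediate consequence of Rio's covariance inequality for absolutely regular sequences, namely of \cite[Corollary~1.4 and Remark~1.6]{rio_cov}. Throughout, $f$ is understood to be centered, $\IE f(X_1)=0$ — this is the only case used in the applications, and it is the form in which \cite{rio_cov} states the bound — so that $\big\|\sum_{i=1}^q f(X_i)\big\|_2^2 = \mathrm{Var}\big(\sum_{i=1}^q f(X_i)\big)$.

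First I would record the elementary variance expansion behind Rio's estimate: by strict stationarity,
\[
    \mathrm{Var}\Big(\sum_{i=1}^q f(X_i)\Big) = q\,\mathrm{Var}\big(f(X_1)\big) + 2\sum_{k=1}^{q-1}(q-k)\,\mathrm{Cov}\big(f(X_1),f(X_{1+k})\big),
\]
so that bounding $q-k\le q$ and passing to absolute values gives
\[
    \Big\|\sum_{i=1}^q f(X_i)\Big\|_2^2 \ \le\ 2q\sum_{k=0}^{q-1}\big|\mathrm{Cov}\big(f(X_1),f(X_{1+k})\big)\big|.
\]
The next step is to invoke Rio's covariance bound, which controls each term $\big|\mathrm{Cov}(f(X_1),f(X_{1+k}))\big|$ by a fixed constant times $\int_0^{\beta_X(k)} Q_f(u)^2\,du$, where $Q_f$ is the quantile function of $|f(X_1)|$ from the definition of $\|\cdot\|_{2,\beta}$. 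Summing over $k\ge 0$ and exchanging sum and integral via the layer--cake identity gives
\[
    \sum_{k\ge 0}\int_0^{\beta_X(k)} Q_f(u)^2\,du = \int_0^1\Big(\sum_{k\ge 0}\mathbf{1}\{u<\beta_X(k)\}\Big)Q_f(u)^2\,du = \int_0^1 \beta_X^{-1}(u)\,Q_f(u)^2\,du,
\]
which is exactly $\|f\|_{2,\beta}^2$; here the middle equality uses that $\#\{k\ge 0:\beta_X(k)>u\}$ coincides with the cadlag inverse $\beta_X^{-1}(u)$ of \cite{betanorm}. Tracking the constants through these steps reproduces the bound $4q\|f\|_{2,\beta}^2$ asserted in \cite[Corollary~1.4]{rio_cov}, and \cite[Remark~1.6]{rio_cov} is precisely the observation that Rio's integral rewrites as $\|f\|_{2,\beta}^2$.

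Since the statement is a citation supplemented by the standard covariance expansion, there is no real obstacle. The point that needs the most care is bookkeeping the numerical constants — the factors of $2$ coming from the two-sided lag sum, from the covariance inequality, and from the normalisation of the $\beta$-mixing coefficient — so that they combine to exactly $4$, together with checking that the inverse mixing-rate function appearing in Rio's inequality agrees with the cadlag inverse $\beta_X^{-1}$ defined before the lemma. Both are settled by the cited Corollary~1.4 and Remark~1.6, so the lemma follows by direct reference.
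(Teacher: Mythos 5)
Your proof takes the same route as the paper, which treats the lemma as a direct citation of Corollary~1.4 and Remark~1.6 of Rio; the variance expansion, Rio's covariance bound, and the layer--cake rewriting of the sum as $\int_0^1\beta_X^{-1}(u)Q_f(u)^2\,du$ are exactly the content of those references, so your argument simply unpacks the cited black box. Your observation that $f$ should be read as centered is correct and worth recording: without $\IE f(X_1)=0$ the left-hand side grows like $q^2(\IE f(X_1))^2$ while the right-hand side is only $O(q)$, so the inequality would fail, but the paper applies the lemma only to the centered class $\ssF$ in Lemma~\ref{lem_talagrand_mixing}.
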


\begin{lem}\label{lem_talagrand_mixing}
    Let $\ssF \subset \{f:\IR^r \to \IR \text{ measurable}\}$ be any countable class of functions such that $\IE f(X_i) = 0$ for all $f\in \ssF$ and $\sup_{f\in \ssF}\|f\|_{\infty} \le F$.
    
    Suppose that there exist constants $\kappa > 1, \alpha > 1$ such that for all $k\in\IN$, $\beta_X(k) \le \kappa\cdot k^{-\alpha}$. Define $r_n$ as in \reff{thm:beta_eq0}.
    
    Then for all $x > 0$ and $q \in\IN$,
    \begin{equation}
        \IP\Big(\sup_{f\in \ssF}\Big|\sum_{i=1}^{n}f(X_i)\Big| \ge 4nr_n + 8B^{1/2}Fn^{1/2} x^{1/2} + 4Fqx\Big) \le 2\exp(-x) + \frac{n\beta_X(q)}{qx}.\label{lem_talagrand_mixing_res1}
    \end{equation}
    Especially, for any $z \le 1$, 
    \begin{equation}
        \IP\Big(\sup_{f\in \ssF}\Big|\sum_{i=1}^{n}f(X_i)\Big| \ge 4nr_n + 8B^{1/2}Fn^{1/2} x^{1/2} + 8F\kappa^{\frac{1}{\alpha+1}}z^{-\frac{1}{\alpha+1}}x\Big) \le 2\exp(-x) + \frac{nz}{x}.\label{lem_talagrand_mixing_res2}
    \end{equation}
\end{lem}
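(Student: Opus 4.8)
The plan is to decouple the dependent sum by a blocking argument together with Berbee's coupling, and then apply the Talagrand-type inequality of Theorem~\ref{bennett_bousquet} to the resulting independent block--sums. First I would fix $q\in\IN$, partition $\{1,\dots,n\}$ into $m:=\lceil n/q\rceil$ consecutive blocks $B_1,\dots,B_m$ of length $q$ (the last one possibly shorter), and write $S_j(f):=\sum_{i\in B_j}f(X_i)$, so that $\sup_{f\in\ssF}|\sum_{i=1}^n f(X_i)|=\sup_{f\in\ssF}|\sum_{j=1}^m S_j(f)|$. Iterating the coupling Lemma~\ref{lem:coupling} separately along the odd-indexed and along the even-indexed blocks — which is admissible because two consecutive same-parity blocks are separated by a gap of at least $q$, so that the relevant $\beta$-coefficient is $\le\beta_X(q)$ — I obtain, on an enlarged probability space, block-copies $B_1^*,\dots,B_m^*$ such that $(B_j^*)_{j\ \mathrm{odd}}$ are mutually independent, $(B_j^*)_{j\ \mathrm{even}}$ are mutually independent, each $B_j^*$ has the same law as $B_j$, and $\IP(B_j^*\neq B_j)\le\beta_X(q)$ for all $j$. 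Writing $S_j^*(f)$ for the corresponding block-sums, $D:=\sum_{j=1}^m\Ii_{\{B_j^*\neq B_j\}}$, $Z^{\mathrm{odd}}:=\sup_{f\in\ssF}|\sum_{j\ \mathrm{odd}}S_j^*(f)|$ and $Z^{\mathrm{even}}:=\sup_{f\in\ssF}|\sum_{j\ \mathrm{even}}S_j^*(f)|$, the elementary bound $|S_j(f)-S_j^*(f)|\le 2Fq\,\Ii_{\{B_j^*\neq B_j\}}$ gives $\sup_{f\in\ssF}|\sum_{i=1}^n f(X_i)|\le Z^{\mathrm{odd}}+Z^{\mathrm{even}}+2Fq\,D$, while $\IE D\le m\,\beta_X(q)\le 2n\beta_X(q)/q$ and Markov's inequality yield $\IP(2FqD\ge 2Fqx)\le n\beta_X(q)/(qx)$ up to an absorbed constant.

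Next I would control the two independent pieces. The concatenation of the odd-indexed copies is a sequence of i.i.d.\ blocks of length $q$ whose within-block law is that of $(X_1,\dots,X_q)$, hence a sequence whose mixing coefficients are dominated by $\beta_X(\cdot)$ and vanish beyond lag $q$; Lemma~\ref{thm:beta} therefore applies to it and gives $\IE^*Z^{\mathrm{odd}}\le n\,r_n$, and likewise $\IE^*Z^{\mathrm{even}}\le n\,r_n$. For the fluctuation I apply Theorem~\ref{bennett_bousquet} to the independent vectors $(B_j^*)_{j\ \mathrm{odd}}$ with the class of block-functionals rescaled by $(Fq)^{-1}$, which is then bounded by $1$; by Rio's variance bound (Lemma~\ref{beta_var_bound}) together with \reff{standardbound_betnorm}, $\mathrm{Var}(S_j^*(f))=\mathrm{Var}(\sum_{i\in B_j}f(X_i))\le 4q\|f\|_{2,\beta}^2\le 4qBF^2$, so $\sigma^2=4B/q$ is admissible and the number of odd blocks times $q$ is at most $n$. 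Undoing the rescaling, using $\sqrt{a+b}\le\sqrt a+\sqrt b$ and $2\sqrt{uv}\le u+v$ to split the mixed term, and inserting $\IE^*Z^{\mathrm{odd}}\le n r_n$, one obtains on an event of probability $\ge 1-e^{-x}$ a bound of the form $Z^{\mathrm{odd}}\lesssim n r_n+B^{1/2}Fn^{1/2}x^{1/2}+Fqx$, and the same for $Z^{\mathrm{even}}$; a union bound over these two events and over $\{D\ge x\}$, with all numerical factors chosen generously, then delivers \reff{lem_talagrand_mixing_res1}.

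Finally, \reff{lem_talagrand_mixing_res2} follows by specializing $q=q^*(z):=\min\{q\in\IN:\beta_X(q)\le qz\}$ for $z\le 1$: then $\beta_X(q^*(z))/q^*(z)\le z$ turns the remainder term into $nz/x$, while the polynomial decay $\beta_X(k)\le\kappa k^{-\alpha}$ gives $q^*(z)\le 2\kappa^{1/(\alpha+1)}z^{-1/(\alpha+1)}$ exactly as in \reff{thm:beta_eq10}, so that $4Fqx\le 8F\kappa^{1/(\alpha+1)}z^{-1/(\alpha+1)}x$, and the $r_n$ and $n^{1/2}x^{1/2}$ terms are unchanged.

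The main obstacle is the decoupling step: organizing the iterated applications of Lemma~\ref{lem:coupling} (with fresh auxiliary uniform variables) so that each parity class becomes genuinely mutually independent while the coupling-failure probability of every block stays bounded by $\beta_X(q)$, and then checking that the concatenated independent-block sequence still satisfies the hypotheses of both Lemma~\ref{thm:beta} and Theorem~\ref{bennett_bousquet}. Once this is set up, the remainder is a bookkeeping of constants resting on Rio's variance bound and the elementary estimate for $q^*$.
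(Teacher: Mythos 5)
Your proof is essentially the paper's own argument: block the sum into length-$q$ pieces, decouple via Berbee's coupling (Lemma~\ref{lem:coupling}) so that the odd-indexed and even-indexed block sums each become sums of i.i.d.\ vectors, apply Theorem~\ref{bennett_bousquet} with the variance parameter $\sigma^2=4B/q$ coming from Rio's bound (Lemma~\ref{beta_var_bound}) and \reff{standardbound_betnorm}, feed the expectation bound $nr_n$ from Lemma~\ref{thm:beta} into the Talagrand inequality (noting the coupled sequence keeps mixing coefficients dominated by $\beta_X$), and control the coupling error by Markov; the specialization $q=q^*(z)$ then gives \reff{lem_talagrand_mixing_res2} via \reff{thm:beta_eq10}. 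The only differences are cosmetic (block-level indicators for $D$ rather than the paper's per-index count $A=F\sum_i\Ii_{X_i\neq X_i^0}$, and constants chosen ``generously'' rather than tracked to the exact factors $4$, $8$, $4$ in \reff{lem_talagrand_mixing_res1}).
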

\begin{proof}[Proof of Lemma \ref{lem_talagrand_mixing}]
    Let $q\in\N$. Starting from \Cref{lem:coupling} we construct by induction a sequence of random variables $(X_i^0)_{i>0}$ such that:
\begin{enumerate}
    \item For any $i\geq 0$, the random variable $U_i^0:=(X^0_{iq+1},\dots,X^0_{iq+q})$ has the same distribution as $U_i:=(X_{iq+1},\dots,X_{iq+q})$.
    \item The sequence $(U_{2i}^0)_{i\geq 0}$ is i.i.d. and so is $(U^0_{2i+1})_{i\geq0}$.
    \item For any $i\geq 0$, $\P(U_i\neq U_i^0)\le \beta(q)$.
\end{enumerate}
Define $W_i(f) := \sum_{j=(i-1)q+1}^{iq \wedge n}f(X_i^{0})$. From the above coupling we obtain the following decomposition:
\[
    \sum_{i=1}^{n}f(X_i) = \sum_{i=1, i \text{ odd}}^{\lceil \frac{n}{q}\rceil}W_i(f) + \sum_{i=1, i \text{ even}}^{\lceil \frac{n}{q}\rceil}W_i(f) + \sum_{i=1}^{n}\big\{f(X_i) - f(X_i^0)\big\}.
\]
We conclude that
\begin{equation}
    \sup_{f\in \ssF}\Big| \sum_{i=1}^{n}f(X_i)\Big| \le Fq\cdot (Z_1 + Z_2) + A,\label{lem_talagrand_mixing_eq0}
\end{equation}
where
\[
    Z_1 := \sup_{f\in \ssF}\Big|\sum_{i=1, i \text{ odd}}^{\lceil \frac{n}{q}\rceil}\frac{W_i(f)}{Fq}\Big|, \quad\quad Z_2 := \sup_{f\in \ssF}\Big|\sum_{i=1, i \text{ even}}^{\lceil \frac{n}{q}\rceil}\frac{W_i(f)}{Fq}\Big|, \quad\quad A := F\sum_{i=1}^{n}\Ii_{X_i \not= X_i^{0}}.
\]
Note that $W_{2k}(f)$, $W_{2k+1}(f)$, $k \ge 0$ are independent by construction. Furthermore, $\sup_{f\in \ssF}|\frac{Y_i(f)}{Fq}| \le 1$ and by Lemma \ref{beta_var_bound} and \reff{standardbound_betnorm},
\[
    \|W_{i}(f)\|_2^2 \le 4q\|f\|_{2,\beta}^2 \le 4qB \|f\|_{\infty}^2 \le 4q BF^2
\]
and thus $\|\frac{W_i(f)}{Fq}\|_2^2 \le \frac{4B}{q}$. By Theorem \ref{bennett_bousquet} applied with $\sigma^2 = \frac{4B}{q}$, we have
\[
    \IP\Big(Z_1 \ge \IE Z_1 + (2x(\frac{4Bn}{q^2} + 2\IE Z_1))^{1/2} + \frac{x}{3}\Big) \le \exp(-x).
\]
Using the simple bound $2ab \le a^2 + b^2$, we have
\[
    (2x(\frac{4Bn}{q^2} + 2\IE Z_1))^{1/2} \le \frac{(8Bxn)^{1/2}}{q} + 2(x\IE Z_1)^{1/2} \le \frac{(8Bxn)^{1/2}}{q} + x + \IE Z_1.
\]
This yields
\begin{equation}
    \IP\Big(Z_1 \ge 2\IE Z_1 + (\frac{8Bxn}{q^2})^{1/2} + \frac{4x}{3}\Big) \le \exp(-x).\label{lem_talagrand_mixing_eq1}
\end{equation}
Let $I_1 := \bigcup_{i=1, \text{$i$ odd}}^{\lceil \frac{n}{q}\rceil}\{(i-1)q+1,...,iq\wedge n\}$. Then $Z_1 = \frac{1}{Fq}\sup_{f\in \ssF}\big|\sum_{i \in I_1}f(X_i^{0})\big|$. Note that by construction, $X_i^{0}$ is still $\beta$-mixing with coefficients upper bounded by $\beta_X$, and $I_1$ has less than or equal $n$ summands. It is therefore easily seen that the upper bounds in \cite{dl} which were used in the proof of Lemma \ref{thm:beta} stay the same. We obtain from \reff{thm:beta_eq0} that 
\begin{equation}
    Fq\cdot \IE Z_1 \le nr_n.\label{lem_talagrand_mixing_eq2}
\end{equation}
Similar results as given in \reff{lem_talagrand_mixing_eq1} and \reff{lem_talagrand_mixing_eq2} also hold for $Z_2$.

Finally, we have by Markov's inequality that
\begin{equation}
    \IP(A > x) \le \frac{\|A\|_1}{x} \le \frac{nF \IP(X_i \not= X_i^0)}{x} \le \frac{nF\beta_X(q)}{x}.\label{lem_talagrand_mixing_eq3}
\end{equation}
Insertion of \reff{lem_talagrand_mixing_eq1}, \reff{lem_talagrand_mixing_eq2} and \reff{lem_talagrand_mixing_eq3} into \reff{lem_talagrand_mixing_eq0} yields
\begin{eqnarray*}
    &&\IP\Big(\sup_{f\in \ssF}\Big|\sum_{i=1}^{n}f(X_i)\Big| \ge 2\cdot\big(2nr_n + (8BF^2nx)^{1/2} + \frac{4Fqx}{3}\big) + Fqx\Big)\\
    &\le& \IP(Z_1 \ge 2\IE Z_1 + (\frac{8Bxn}{q^2})^{1/2} + \frac{4x}{3}) + \IP(Z_2 \ge 2\IE Z_2 + (\frac{8Bxn}{q^2})^{1/2} + \frac{4x}{3})\\
    &&\quad\quad + \IP(A \ge Fqx)\\
    &\le& 2 \exp(-x) + \frac{n\beta_X(q)}{qx},
\end{eqnarray*}
which concludes the proof of \reff{lem_talagrand_mixing_res1}. \reff{lem_talagrand_mixing_res2} follows from the upper bound in \reff{thm:beta_eq10} and the choice $q = q^{*}(z)$.
\end{proof}

\end{document}